\theoremstyle{plain}
\newtheorem{theorem}{Theorem}
\newtheorem{lemma}[theorem]{Lemma}
\newtheorem{definition}[theorem]{Definition}
\newtheorem*{remark}{Remark}
\newcommand{\cD}{\mathcal{D}}
\renewcommand{\P}{{\mathbb P}}
\newcommand{\E}{{\mathbb E}}
\newcommand{\R}{{\mathbb R}}
\newcommand{\Z}{{\mathbb Z}}
\newcommand{\cS}{{\mathcal S}}
\newcommand{\cG}{{\mathcal G}}
\newcommand{\cX}{{\mathcal X}}
\newcommand{\cT}{{\mathcal T}}
\newcommand{\cE}{{\mathcal E}}
\newcommand{\cK}{{\mathcal K}}
\newcommand{\cI}{{\mathcal I}}
\newcommand{\cP}{{\mathcal P}}
\renewcommand{\cD}{{\mathcal D}}
\newcommand{\bbZ}{{\mathbf Z}}
\newcommand{\Nex}{\hat N}
\newcommand{\tour}{{\rm Tour}}
\newcommand{\score}{{\rm Score}}
\newcommand{\win}{{\rm Win}}
\newcommand{\ig}{{\rm IntGr}}
\renewcommand{\le}{\leqslant}
\renewcommand{\ge}{\geqslant}
\author[M. Buckland]{Matthew Buckland}
\address{Department of Statistics, University of Oxford}
\email{matthew.buckland@bnc.ox.ac.uk}
\author[B. Kolesnik]{Brett Kolesnik}
\address{Department of Statistics, University of Warwick}
\email{brett.kolesnik@warwick.ac.uk}
\author[R. Mitchell]{Rivka Mitchell}
\address{Department of Mathematics, University of Oxford}
\email{rivka.mitchell@maths.ox.ac.uk}
\author[T. Przyby{\l}owski]{Tomasz Przyby{\l}owski}
\address{Department of Mathematics, University of Oxford}
\email{przybylowski@maths.ox.ac.uk}
\keywords{Coxeter permutahedra, 
digraph, 
graphical zonotope, 
majorization, 
Markov chain Monte Carlo, 
MCMC, 
mixing time, 
oriented graph, 
paired comparisons, 
permutahedron, 
root system, 
score sequence, 
signed graph, 
tournament}
\subjclass[2010]{05C20,	
11P21,	
17B22,	
20F55,	
51F15,	
51M20,	
52B05,	
60J10,	
62J15}	
\begin{document}

\title
[Random walks on Coxeter interchange graphs]
{Random walks on Coxeter interchange graphs}

\begin{abstract}
A tournament is an orientation of a graph. 
Vertices are players and edges are games, 
directed away from the winner. 
Kannan, Tetali and Vempala and McShine 
showed that tournaments with 
given score sequence can be rapidly sampled, 
via simple random walks on 
the interchange graphs of Brualdi and Li. 
These graphs are generated by 
the cyclically directed triangle, 
in the sense that traversing an edge corresponds to 
the reversal of such a triangle in a tournament.

We study Coxeter tournaments on 
Zaslavsky's signed graphs. 
These tournaments involve collaborative and solitaire games, 
as well as the usual competitive games. 
The interchange graphs are 
richer in complexity, as a variety of other 
generators are involved. 
We prove rapid mixing by an intricate application of 
Bubley and Dyer's method of path coupling, using 
a delicate re-weighting of the graph metric.
Geometric connections 
with the Coxeter permutahedra 
introduced by Ardila, Castillo, Eur and Postnikov
are discussed.  
\end{abstract}

\maketitle

\section{Introduction}\label{S_intro}

A tournament is an orientation of a graph. We think of 
vertices as players and edges as games, the orientation of which
indicates the winner. 
Tournaments are related to the geometry of
the permutahedron $\Pi_{n-1}$, which is a classical polytope
in discrete convex geometry. 
See, e.g., 
Stanley \cite{Sta80}, Ziegler \cite{Zie95}, 
and Kolesnik and Sanchez \cite{KS20}.

Classical combinatorics is related to the root system of 
type $A_n$. Coxeter combinatorics is concerned 
with extensions to the other roots systems of types 
$B_n$, $C_n$ and $D_n$ (and sometimes also the
finite, exceptional types 
$E_6$, $E_7$, $E_8$, $F_4$ and $G_2$). 
For example, works by 
Galashin, Hopkins, McConville and
Postnikov \cite{GHMcSP19,GHMcSP21}
have investigated Coxeter versions of 
the chip-firing game (the sandpile model). 

Recently, Kolesnik and Sanchez \cite{KS23} 
introduced the Coxeter 
analogue of graph tournaments, which are 
associated with orientations
of signed graphs, as in Zaslavsky \cite{Zas91}, 
and the Coxeter permutahedra
$\Pi_\Phi$, recently introduced by 
Ardila, Castillo, Eur and Postnikov \cite{ACEP20}. 
Coxeter tournaments involve
collaborative and solitaire games, 
as well as the usual competitive games in graph tournaments. 

In this work, we show 
(see Theorem \ref{T_Main} below)
that random walks  
rapidly mix on 
the sets of Coxeter tournaments 
with given score sequence, that is, 
on 
the fibers of the Coxeter permutahedra
$\Pi_\Phi$. Informally, this means that the walk is close
to uniform in the fiber after a short amount of time, 
yielding an efficient way to sample from this set of interest. 

Many combinatorial properties of these structures remain mysterious. 
The purpose of this work is to explore the associated 
Coxeter interchange graphs, which encode their combinatorics, 
via random walks. 
These graphs, introduced by Kolesnik, 
Mitchell and Przyby{\l}owski  \cite{KMP23}, 
generalize the interchange graphs introduced by 
Brualdi and Li \cite{BL84}. 
Rapid mixing in the classical setting was established by 
Kannan, Tetali and Vempala \cite{KTV99} and McShine \cite{McS00}.
We recover these results by our general strategy. 

Let us emphasize that even the classical 
interchange graphs appear to be difficult to describe in general.  
Indeed,  
Brualdi and Li \cite[p.\ 151]{BL84}
state that they have 
``a rich and fascinating combinatorial structure
and that much remains to be determined.'' 
Even counting the number
of vertices is of
``considerable interest and considerable difficulty'' \cite[p.\ 143]{BL84}.

Beginning with Spencer \cite{Spe74}, and subsequent works by 
McKay \cite{McKay90}, McKay and Wang \cite{McKW96}, 
and Isaev, Iyer and McKay \cite{IIMcK20}, 
asymptotic estimates for the number of vertices in the interchange
graphs have been found only 
for fibers of points near the center of $\Pi_{n-1}$. 
In the other extreme, Chen, Chang and Wang 
 \cite{CCW09} showed that, for certain points near the boundary
 of $\Pi_{n-1}$, 
the interchange graph is the classical hypercube. 

The Coxeter 
interchange graphs are richer still. 
Therefore, in broad terms, we show in this work
that random walks rapidly mix
on a wide and intricate class of graphs. 

Rapid mixing can sometimes be used 
to approximately count sets of interest. 
Roughly speaking, this is because the uniform measure $\pi$
on a set $S$ is related to the size of the set
$\pi=1/|S|$. 
See, e.g., Sinclair \cite{Sin93} for more details. 
The current work might serve as a first step 
towards developing efficient approximate counting 
schemes for the fibers of the Coxeter permutahedra 
$\Pi_\Phi$.

As this work touches on a variety of subjects (combinatorics, 
geometry, algebra and probability), some    
preliminaries are required before we can state our results precisely. 
In Section \ref{S_context}, 
we discuss the literature related to 
tournaments and 
the standard
permutahedron $\Pi_{n-1}$ (of type $A_{n-1}$). 
Our results are discussed informally in 
Sections \ref{S_purpose} and \ref{S_discussion}. 
Further background on tournaments, root systems, signed graphs
and combinatorial geometry is in 
Section \ref{S_back} and in the previous works 
in this series \cite{KS20,KS23,KMP23}.
Our main result is stated formally in Section \ref{S_results}.
See Sections \ref{S_DecompRev}, 
\ref{S_Networks} and \ref{S_Coupling}
for the proofs. 
Finally, a number of open problems and future
directions are listed in Section \ref{S_OPs}.

We hope that this work will 
serve as an invitation 
to step into the 
Coxeter ``worlds'' (of types $B_n$, $C_n$ and $D_n$). 
We believe that 
Coxeter combinatorics 
is fertile ground, where
algebraists, combinatorialists, 
geometers and probabilists
can open new lines of fruitful communication. 
In particular, 
many problems in discrete probability
likely have a Coxeter analogue, 
waiting to be discovered.

\subsection{Context}
\label{S_context}

A {\it tournament} is an orientation of the 
complete graph $K_n$, encoded as some 
$T=(w_{ij}:i>j)$ with all $w_{ij}\in\{0,1\}$. 
Each edge $\{i,j\}$ in $K_n$ is oriented as 
$i\to j$ if $w_{ij}=1$ or $i\leftarrow j$
if $w_{ij}=0$. 
We think of each edge as a game, 
directed away from the winner. 
The {\it win sequence}
\[
{\bf w}(T)=\sum_{i>j}[w_{ij}{\bf e}_i+(1-w_{ij}){\bf e}_j]
\]
lists the total number of wins by each player, 
where ${\bf e}_i\in\Z^n$ are the standard basis vectors. 
We let 
${\bf w}_n=(0,1,\ldots,n-1)$ denote the {\it standard win sequence,}
corresponding to the {\it transitive} (acyclic) tournament 
in which $w_{ij}=1$ for all $i>j$. 
In a sense, ${\bf w}_n$ and its permutations
are as ``spread out'' as possible.  

\begin{figure}[h!]
\centering
\includegraphics[scale=0.9]{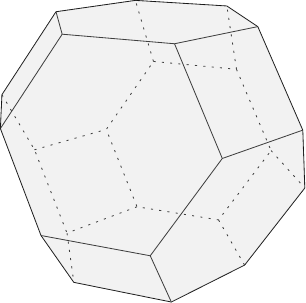}
\caption{The permutahedron 
$\Pi_3\subset\R^4$, projected into $\R^3$. 
Its 24 vertices correspond to the permutations of 
the standard win sequence ${\bf w}_4=(0,1,2,3)$.
}
\label{F_perm}
\end{figure}

Results by Rado \cite{Rad52} and Landau \cite{Lan53}
imply that the set $\win(n)$ of all win sequences is precisely the 
set of lattice points in the permutahedron $\Pi_{n-1}$, 
that is, $\win(n)=\Z^n\cap \Pi_{n-1}$.
We recall that $\Pi_{n-1}$ is a classical polytope in discrete geometry
(see, e.g., Ziegler \cite{Zie95}), 
obtained as the convex hull of ${\bf w}_n$ and its permutations,
see Figure \ref{F_perm}. 
By Stanley \cite{Sta80}, win sequences 
are in bijection with spanning forests $F\subset K_n$. 
(The volume of $\Pi_{n-1}$ is the number of 
spanning trees $T\subset K_n$.) 
See Postnikov \cite{Pos09} for generalizations. 

It is convenient to make a linear shift 
\begin{equation}\label{E_Pi_prime}
\Pi_{n-1}'=\Pi_{n-1}-\frac{n-1}{2}{\bf 1}_n,
\end{equation}
where ${\bf 1}_n=(1,\ldots,1)\in\Z^n$. 
Note that this re-centers the polytope at 
the origin 
${\bf 0}_n=(0,\ldots,0)\in\Z^n$.  
The {\it score sequence} 
\[
{\bf s}(T)={\bf w}(T)-\frac{n-1}{2}{\bf 1}_n,
\] 
associated with the win sequence 
${\bf w}(T)$ of a tournament $T$, 
is 
given by 
\begin{equation}\label{E_sT}
{\bf s}(T)=\sum_{i>j} (w_{ij}-1/2)({\bf e}_i-{\bf e}_j). 
\end{equation}
This shift 
corresponds to awarding a $\pm1/2$ point for each win/loss. 
We let $\score(n)$ denote the set of all possible score sequences.

Although the set $\score(n)$ has a simple, 
geometric description, 
the set $\tour(n,{\bf s})$, of tournaments 
with given score sequence
${\bf s}$, appears to be quite combinatorially complex. 

Kannan, Tetali and Vempala \cite{KTV99} 
investigated simple random walk
as a way of sampling
from $\tour(n,{\bf s})$. However, rapid mixing was proved 
only for ${\bf s}$ sufficiently close to ${\bf 0}_n$. 
McShine \cite{McS00} established rapid mixing in time $O(n^3\log n)$, 
for {\it all} ${\bf s}\in\score(n)$, 
by an elegant application of 
Bubley and Dyer's \cite{BD97} method of path coupling, which 
was relatively new at the time.
See Section \ref{S_tmix} below for an overview. 
We note that Sarkar \cite{Sar20} has shown that 
mixing takes $\Omega(n^3)$ for some sequences. 

More specifically, in \cite{KTV99,McS00}, the random walks are
on the {\it interchange graphs} $\ig(n,{\bf s})$ introduced by 
Brualdi and Li \cite{BL84}. 
Any two tournaments with the same score sequence have the 
same number of copies of the {\it cyclic triangle} 
$\Delta_c$ (see Figure \ref{F_genD}). 
Moreover, if some $T$ contains a copy $\Delta$ of $\Delta_c$, 
then the tournament $T*\Delta$, obtained by reversing the orientation
of all edges in $\Delta$, has the same score sequence as $T$. 
These observations are the key to exploring $\tour(n,{\bf s})$.
The graph 
$\ig(n,{\bf s})$ has a vertex $v(T)$ for each $T\in \tour(n,{\bf s})$
and two $v(T_1),v(T_2)$ are neighbors if $T_2=T_1*\Delta$
for some copy $\Delta\subset T_1$ of $\Delta_c$. 
It can be shown that $\ig(n,{\bf s})$ is connected. 
In this sense, $\Delta_c$ {\it generates} the set 
$\tour(n,{\bf s})$. 

The permutahedron $\Pi_{n-1}$ 
is related with the standard root system 
of type $A_{n-1}$, as the symmetric group 
$S_n$ is the Weyl group of type $A_{n-1}$. 
We refer to, e.g., the standard text by 
Humphreys \cite{Hum90} for background 
on root systems.  
We recall that Killing \cite{Kil90} and 
Cartan \cite{Car96} classified 
all (irreducible, crystallographic) root systems 
(up to isomorphism) as 
the infinite families 
$A_{n-1}$, $B_n$, $C_n$ and $D_n$ 
and the finite exceptional types 
$E_6$, $E_7$, $E_8$, $F_4$ and $G_2$. 

Coxeter permutahedra $\Pi_\Phi$, recently studied
by Ardila, Castillo, Eur and Postnikov \cite{ACEP20}, 
are obtained by 
replacing the role of $S_n$ in the definition of $\Pi_{n-1}$ with the 
Weyl group $W_\Phi$ of a root system $\Phi$. 
See, e.g., Figure \ref{F_permC}. 

\begin{figure}[h!]
\centering
\includegraphics[scale=0.9]{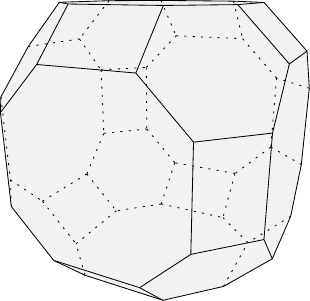}
\caption{The Coxeter permutahedron of type $C_3$.
}
\label{F_permC}
\end{figure}

The previous works in this series 
\cite{KS23,KMP23} studied the connection between 
the polytopes 
$\Pi_\Phi$ and {\it Coxeter tournaments,} 
which are related
to orientations of signed graphs, as developed by 
Zaslavsky  \cite{Zas81,Zas82,Zas91}.
As mentioned above, these tournaments 
involve collaborative and solitaire games, 
as well as the usual competitive games 
in classical graph tournaments.

\subsection{Purpose}
\label{S_purpose}

In this work 
(see Theorem \ref{T_Main})
we show that simple random walks
mix rapidly on the {\it Coxeter interchange graphs}
$\ig(\Phi,{\bf s})$. These graphs 
encode the combinatorics of the 
sets $\tour(\Phi,{\bf s})$, of Coxeter tournaments
with a given score sequence ${\bf s}$, 
and give structural information 
about the fibers of the Coxeter permutahedra $\Pi_\Phi$. 
We focus on the non-standard types $\Phi=B_n$, $C_n$ and $D_n$.  

We also show (see Theorem \ref{T_MainInt}) 
that all Coxeter interchange graphs are connected
and we bound their diameter. 
In constructing our random walk couplings, 
we uncover various other fine, structural properties of the 
graphs $\ig(\Phi,{\bf s})$, and hence the sets $\tour(\Phi,{\bf s})$, 
which might be of independent 
(algebraic, geometric, etc.) interest.

\subsection{Discusssion}
\label{S_discussion}

Path coupling is a powerful method for 
establishing rapid mixing 
(see Section \ref{S_tmix}). 
As already mentioned, path coupling was used in \cite{McS00}. 
We will also use this method, 
however, the application 
in the Coxeter setting 
is significantly more delicate. 

As discussed above, the interchange graphs in type $A_{n-1}$
are generated by a single neutral tournament, namely, 
the cyclic triangle $\Delta_c$. 
On the other hand, in the Coxeter setting, 
there are a number of other
generators which play a role 
(see Figures \ref{F_genB}, \ref{F_genC} and \ref{F_genD}). 
A fascinating interplay arises, as these  
generators can interact in a variety of interesting ways. 
As such, the Coxeter interchange graphs are much richer
in complexity. Likewise, the analysis of random walks on these
structures is more involved. 

The types increase in difficulty in order $A_{n-1}$, $D_n$, 
$B_n$, $C_n$. Type $C_n$ is especially challenging, 
due to the presence of loops in some of the generators, 
which we call {\it clovers} (see Figure \ref{F_genC}). 
These generators correspond to double edges 
in an interchange graph. 
In particular, a special type of structure, 
which we call a {\it crystal} (see
Figure \ref{F_crystal}), can appear in type $C_n$ interchange graphs.
The crystal arises in $C_3$ when 
 ${\bf s}=(2,1,1)$. Crystals can also be found as subgraphs in larger
type $C_n$ interchange graphs, for instance, in the {\it snare drum}
in Figure \ref{F_drum}, when ${\bf s}=(-1,0,1)$. 
In other examples, such as the {\it tambourine} in Figure \ref{F_tamb}, 
when ${\bf s}=(0,0,0)$ is the center of the polytope, 
there are 
no crystals, but interesting structure nonetheless. 

\begin{figure}[h!]
\centering
\includegraphics[scale=1.15]{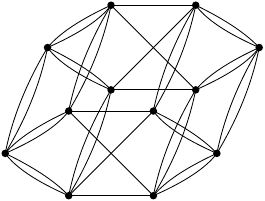}
\caption{The snare drum
interchange graph 
$\ig(C_3,{\bf s})$, when ${\bf s}=(-1,0,1)$,  
is the Cartesian product of a double edge
and the crystal (see Figure \ref{F_crystal}).  
}
\label{F_drum}
\end{figure}

The presence of crystals in type $C_n$ 
interchange graphs 
leads to two main issues.
The first is in extending certain natural couplings 
on various small subgraphs 
(the {\it extended networks} discussed in 
Sections \ref{S_class_net} and \ref{S_ex_net})
to a unified coupling on the entire interchange graph. 
To overcome this difficulty, 
we will prove a number of detailed combinatorial properties of the 
Coxeter interchange graphs. We classify the types of 
subgraphs (see Figures \ref{F_diamonds1}, 
\ref{F_diamonds2} and \ref{F_crystal}), 
which together form the full graph,
and study the ways in which they can intersect. 
For example, one crucial property (see Lemma \ref{L_crystals2})
is that any two crystals can share at most one single edge. 
Without this property, it seems that a path coupling argument
would not be possible. 

The second issue caused by crystals is in obtaining a 
``contractive'' (see Section \ref{S_tmix}) coupling. 
In applying path coupling, we will need
to re-weight the graph metric in 
a specific way, which accounts for 
the occurrence of crystals. 
Loosely speaking, the choice of weights
is related to the fact that, in our random walk couplings, 
crystals work like ``switches,'' that convert
single edges to double edges, and vice versa. 

The overall coupling used to establish rapid mixing 
in the Coxeter setting 
is quite elaborate.
See, e.g., Figures \ref{F_coup_C1a}, 
\ref{F_coup_C1b} and \ref{F_coup_C2} below.
The classical type $A_{n-1}$ result 
\cite{KTV99,McS00} is a special case of 
the argument depicted in 
Figure \ref{F_coup_BD}.

\begin{figure}[h!]
\centering
\includegraphics[scale=1.15]{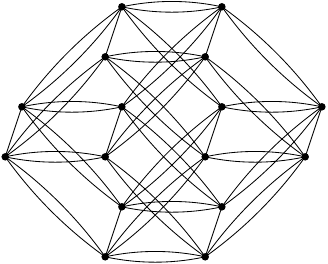}
\caption{The tambourine interchange graph 
$\ig(C_3,{\bf s})$, when ${\bf s}=(0,0,0)$ is 
the center of the type $C_3$ permutahedron. 
This graph is the Cartesian product of a single edge
and the cube of double edges. 
}
\label{F_tamb}
\end{figure}

\section{Background}
\label{S_back}

We refer to \cite{ACEP20}, 
Humphreys \cite{Hum90}, Zaslavsky  \cite{Zas81,Zas82,Zas91}, 
and the previous works in this series
\cite{KS23,KMP23} for a detailed background on root systems, 
signed graphs
and their connections to discrete geometry. In this section, we 
will only recall what is used in the current work.

\subsection{Coxeter tournaments}

A {\it signed graph} $\cS$ on $[n]=\{1,2,\ldots,n\}$ has a set of signed edges
$E(\cS)$. The four possible types of edges are:  
\begin{itemize}[ ]
\item {\it negative edges} $e^-_{ij}$ between two vertices $i$ and $j$, 
\item {\it positive edges} $e^+_{ij}$ between two vertices $i$ and $j$, 
\item {\it half edges} $e^h_i$ with only one vertex $i$, and 
\item {\it loops} $e^\ell_i$ at a vertex $i$. 
\end{itemize}
We note that 
classical graphs $G$ correspond to 
signed graphs $\cS$ with only negative edges. 

In this work, we focus on the 
{\it complete signed graphs} $\cK_\Phi$ 
of types $\Phi=B_n$, $C_n$ and $D_n$. 
These signed graphs contain 
all possible negative and positive edges $e_{ij}^\pm$. 
In type $B_n$ (resp.\ $C_n$), 
all possible half edges $e_i^h$ (resp.\ loops $e_i^\ell$) 
are also included.
We call a signed graph $\cS$ a {\it $\Phi$-graph}
if $\cS\subset \cK_\Phi$. 

Most of the results in the literature on 
classical (type $A_{n-1}$)
graph tournaments restricts to the 
case that $G$ is the complete graph $K_n$. 
We note that the signed graph $\cK_{A_{n-1}}$
with all possible negative edges 
(and no other types of signed edges)
corresponds to 
the classical complete graph $K_n$.

A {\it Coxeter tournament} $\cT$ 
on a signed graph $\cS$ is an orientation of $\cS$. 
When $\cS$ is unspecified, our default assumption will be 
that $\cS=\cK_\Phi$.
More formally, $\cT=(w_e:e\in E(\cS))$, with all $w_e\in\{0,1\}$.  
We think of each $e\in E(\cS)$ as a {\it game}, and $w_e$
as indicating its outcome. 
(We think of $E(\cS)$ as having 
a natural ordering, so that 
$(w_e:e\in E(\cS))$ holds all 
information about the orientation of $\cS$ under
$\cT$. However, we could, somewhat pedantically, instead write 
$\cT=\{(e,w_e):e\in E(\cS)\}$.)

The {\it score sequence} is given by, cf.\ \eqref{E_sT}, 
\[
{\bf s}(\cT)
=\sum_{e\in E(\cS)} (w_e-1/2){\bf e},
\]
where 
${\bf e}$ is the vector corresponding to 
the signed edge $e$, given by 
${\bf e}_{ij}^\pm={\bf e}_i\pm {\bf e}_j$, 
${\bf e}_{i}^h={\bf e}_i$ and ${\bf e}_{i}^\ell=2{\bf e}_i$. 
In other words: 
\begin{itemize}[ ]
\item negative edges $e_{ij}^-$ are {\it competitive games} in which 
one of $i,j$ wins and the other loses a $1/2$ point, 
\item positive edges $e_{ij}^+$ are {\it collaborative games} in which 
$i,j$ both win or lose a $1/2$ point, 
\item half edges $e_i^h$ are {\it (half edge) solitaire games} in which 
$i$ wins or loses a $1/2$ point, and 
\item loops $e_i^\ell$ are {\it (loop) solitaire games} in which 
$i$ wins or loses $1$ point. 
\end{itemize}

If ${\bf s}(\cT)={\bf 0}_n$ we say that $\cT$ is {\it neutral}. 

We let ${\bf s}_\Phi$ denote the {\it standard score sequence} 
corresponding
to the Coxeter tournament in which all 
$w_e=1$. We note that, in some contexts, ${\bf s}_\Phi$ is called the
{\it Weyl vector}. It is also the sum of the {\it fundamental weights}
of the root system $\Phi$. 
See, e.g., \cite{Hum90,Hal15} for more details. 

As discussed in \cite{ACEP20}, the 
 {\it Coxeter $\Phi$-permutahedron} $\Pi_\Phi$ is 
the convex hull of the orbit of ${\bf s}_\Phi$ 
under the Weyl group $W_\Phi$
of type $\Phi$. Thus ${\bf s}_\Phi$ is a 
distinguished vertex of $\Pi_\Phi$. 
Note that the symmetric group $S_n$ is the Weyl group 
of standard type $\Phi=A_{n-1}$
and the Weyl vector is ${\bf s}_n={\bf w}_n-\frac{n-1}{2}{\bf 1}_n$, 
so $\Pi_{n-1}'$
(see \eqref{E_Pi_prime} above) 
is the {\it $\Phi$-permutahedron} of standard type $\Phi=A_{n-1}$. 

In \cite{KS23}, we showed that 
$\Pi_\Phi$ is precisely the set of 
all possible {\it mean} score
sequences of {\it random}  Coxeter tournaments, 
thereby establishing a Coxeter analogue of 
a classical result of Moon \cite{Moo63}. 
The next work in this series \cite{KMP23} focused on 
deterministic Coxeter tournaments. 
The set $\score(\Phi)$ of all 
score sequences of Coxeter tournaments
was classified, generalizing 
the classical result of Landau \cite{Lan53}
discussed above. 

The precise characterization of $\score(\Phi)$ 
is somewhat technical, involving a 
certain weak sub-majorization condition 
and additional parity conditions
in types $C_n$ and $D_n$. 
The proof is constructive, in that it shows 
how to build a Coxeter tournament 
with any given score sequence. 
See \cite[Theorem 4]{KMP23} for more details.

\subsection{Interchange graphs}
\label{S_IntGr}

The set $\tour(\Phi,{\bf s})$ of all 
Coxeter tournaments on $\cK_\Phi$ 
with given score sequence ${\bf s}$
was also investigated in \cite{KMP23}. 
Coxeter analogues of the 
interchange graphs $\ig(n,{\bf s})$ 
discussed above were introduced. 
Recall that the sets $\tour(n,{\bf s})$ 
are generated by the cyclic
triangle $\Delta_c$. 
In the Coxeter setting, 
there are additional generators. 

In all types $B_n$, $C_n$ and $D_n$, in addition to 
$\Delta_c$, we also require a {\it balanced triangle} 
$\Delta_b$. In type $B_n$, 
there are also three 
{\it neutral pairs}
$\Omega_1$, $\Omega_2$ and $\Omega_3$. 
On the other hand, in type $C_n$, there are also 
two {\it neutral clovers}
$\Theta_1$ and $\Theta_2$.
See Figures \ref{F_genD}, \ref{F_genB} and \ref{F_genC}.  
In figures depicting Coxeter tournaments, we will draw:  
\begin{itemize}[]
\item competitive games as edges directed away from their winner, 
\item collaborative games as solid/dotted lines if won/lost,
\item half edge solitaire games as half edges 
directed away/toward from their (only) endpoint if won/lost, and 
\item loop solitaire games as solid/dotted loops if won/lost. 
\end{itemize}

\begin{figure}[h!]
\centering
\includegraphics[scale=1.25]{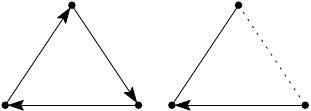}
\caption{The cyclic and balanced
triangles 
$\Delta_c$ and $\Delta_b$ are generators
in all types $B_n$, $C_n$ and $D_n$.}
\label{F_genD}
\end{figure}

\begin{figure}[h!]
\centering
\includegraphics[scale=1.25]{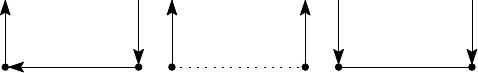}
\caption{The neutral pairs
$\Omega_1$, $\Omega_2$ and $\Omega_3$
are additional generators in type $B_n$. 
}
\label{F_genB}
\end{figure}

\begin{figure}[h!]
\centering
\includegraphics[scale=1.25]{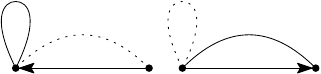}
\caption{The neutral clovers 
$\Theta_1$ and $\Theta_2$
are additional generators in type $C_n$. 
}
\label{F_genC}
\end{figure}

The {\it reversal} $\cT^*$ of a 
Coxeter tournament $\cT=(w_e:e\in E(\cS))$
on $\cS$ is obtained by reversing the outcome of all games
in $\cT$. That is, $\cT^*=(w_e^*:e\in E(\cS))$, where
$w_e^*=1-w_e$. If $\cX\subset\cT$, we let $\cT*\cX$ denote the 
Coxeter tournament obtained from $\cT$ by reversing the outcome
of all games in $\cX$. 
In particular, $\cT^*=\cT*\cT$.

The {\it Coxeter interchange graph} $\ig(\Phi,{\bf s})$ 
has a vertex 
$v(\cT)$ for each $\cT\in \tour(\Phi,{\bf s})$. 
Vertices 
$v(\cT_1),v(\cT_2)$ are neighbors if 
$\cT_2=\cT_1*\cG$, for some copy 
$\cG\subset\cT_1$ of a type $\Phi$
generator. If $\cG$ is a neutral clover, we add a double edge, 
and otherwise we add a single edge. 

For instance, the ``snare drum,'' in Figure \ref{F_drum} above,
is $\ig(C_3,{\bf s})$ when ${\bf s}=(-1,0,1)$. 

The decision to represent clovers as double edges 
might seem arbitrary at first sight, however, 
there is a good reason. 
As it turns out, rather miraculously, 
this adjustment makes the type 
$C_n$ interchange graphs 
degree regular. 
Furthermore, the degree of 
$\ig(\Phi,{\bf s})$ is related to distances in  
$\Pi_\Phi$ in the following way. 
Let $\| {\bf x}\|^2=\sum_i x_i^2$ 
denote the squared length of ${\bf x}\in\R^n$. 

Recall that $\score(\Phi)$ is the set of all possible score
sequences of Coxeter tournaments on $\cK_\Phi$. 
As discussed above, this set is classified in \cite{KMP23}. 

\begin{theorem}[\hspace{1sp}\cite{KMP23}]
\label{T_deg}
Let $\Phi=B_n$, $C_n$ or $D_n$. 
Fix any ${\bf s}\in \score(\Phi)$. 
Then the Coxeter interchange graph 
$\ig(\Phi,{\bf s})$ is regular, with degree
given by 
\[
d(\Phi,{\bf s})
=\frac{\|{\bf s}_\Phi\|^2-\| {\bf s} \|^2}{2},
\]
where ${\bf s}_\Phi$ is the standard score sequence.
\end{theorem}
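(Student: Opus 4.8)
The plan is to prove the stronger \emph{pointwise} statement: for \emph{every} Coxeter tournament $\cT$ on $\cK_\Phi$, the weighted number of generator copies inside $\cT$ equals $\tfrac12(\|{\bf s}_\Phi\|^2-\|{\bf s}(\cT)\|^2)$. Since the right-hand side depends on $\cT$ only through ${\bf s}(\cT)$, this yields both the regularity assertion and the stated value of $d(\Phi,{\bf s})$ at once. Here the \emph{weighted number of generator copies} means $\sum_{\cG}m(\cG)\cdot\#\{\text{copies of }\cG\subset\cT\}$, where $\cG$ ranges over the type-$\Phi$ generators (Figures~\ref{F_genD}, \ref{F_genB} and \ref{F_genC}), with $m(\cG)=2$ if $\cG$ is a neutral clover and $m(\cG)=1$ otherwise; this is exactly the degree of $v(\cT)$ in $\ig(\Phi,{\bf s})$, since distinct generator copies flip distinct game-sets and clovers contribute double edges.

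First I would reformulate the target algebraically. Writing ${\bf e}$ for the vector attached to a game $e$, we have ${\bf s}(\cT)=\sum_e(w_e-\tfrac12){\bf e}$ and ${\bf s}_\Phi=\tfrac12\sum_e{\bf e}$, hence ${\bf s}_\Phi-{\bf s}(\cT)=\sum_{e:\,w_e=0}{\bf e}$ and ${\bf s}_\Phi+{\bf s}(\cT)=\sum_{e:\,w_e=1}{\bf e}$, so that
\[
\frac{\|{\bf s}_\Phi\|^2-\|{\bf s}(\cT)\|^2}{2}=\frac12\left\langle\sum_{e:\,w_e=0}{\bf e},\ \sum_{f:\,w_f=1}{\bf f}\right\rangle=\frac12\sum_{\substack{e:\ w_e=0\\ f:\ w_f=1}}\langle{\bf e},{\bf f}\rangle .
\]
Now $\langle{\bf e},{\bf f}\rangle=0$ unless $e$ and $f$ share a vertex, so only \emph{cherries} --- pairs of distinct games with a common endpoint --- contribute, and the vertex set spanned by a cherry has size at most three. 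Grouping the right-hand side by this vertex set $V$, and grouping the degree of $v(\cT)$ by the (at most three) vertices touched by each generator copy, reduces Theorem~\ref{T_deg} to the following \emph{local} identity: for every vertex set $V$ with $|V|\le 3$,
\[
\sum_{\substack{\cG\subset\cT\\ \text{support}(\cG)=V}}m(\cG)=\frac12\sum_{\substack{\{e,f\}\text{ a cherry spanning }V\\ w_e\ne w_f}}\langle{\bf e},{\bf f}\rangle .
\]

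This local identity depends only on the orientation of the boundedly many games of $\cK_\Phi$ supported on $V$ (up to six competitive and collaborative edges, together with the relevant half-edges in type $B_n$ or loops in type $C_n$), so it is a finite verification; each inner term on the right is one of $0,\pm1,\pm2$. To keep the check short I would factor out the obvious symmetry: both sides are invariant under permuting $V$ and under the sign changes in the Weyl group $W_\Phi$ (which interchange the negative and positive edges at a vertex), so it is enough to test one representative per orbit. The cases are: all-negative triples --- where only the cyclic triangle $\Delta_c$ appears and the identity specializes to the classical count $\binom{n}{3}-\sum_i\binom{w_i}{2}$ of cyclic triangles; triples carrying a positive edge --- where the balanced triangle $\Delta_b$ enters; pairs and triples carrying half-edges --- where the neutral pairs $\Omega_i$ of type $B_n$ enter; and pairs carrying a loop --- where the neutral clovers $\Theta_i$ of type $C_n$ enter.

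I expect the type $C_n$ cases to be the main obstacle. There a clover copy is weighted by $2$, and one must confirm that this doubling is reproduced on the right-hand side; this is automatic in spirit because a loop $e^\ell_i$ has vector $2{\bf e}_i$, so every cherry through a loop carries an extra factor of two, but it has to be verified configuration by configuration. Type $C_n$ also has the richest local geometry --- a loop together with both a negative and a positive edge can sit on a single pair of vertices --- so the enumeration of configurations, and the careful attribution of each neutral substructure to its exact vertex support, are heaviest there, and this is the same feature (loops in the generators) that produces the crystals of the introduction. Beyond that, the argument is routine linear algebra together with the finite, $W_\Phi$-symmetry-reduced casework.
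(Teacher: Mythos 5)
This theorem is not proved in the paper at all: it is imported verbatim from \cite{KMP23} (note the citation in the theorem header), so there is no internal proof to compare your argument against. Judged on its own terms, your proposal is a sound and essentially standard route to the result. The polarization step is correct: since ${\bf s}_\Phi=\tfrac12\sum_e{\bf e}$, one indeed has ${\bf s}_\Phi\mp{\bf s}(\cT)=\sum_{e:\,w_e=0\text{ (resp. }1)}{\bf e}$, so $\tfrac12(\|{\bf s}_\Phi\|^2-\|{\bf s}(\cT)\|^2)$ is a sum of $\tfrac12\langle{\bf e},{\bf f}\rangle$ over discordant pairs of games sharing a vertex, and both this sum and the weighted generator count decompose over vertex supports of size at most three. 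Your identification of the vertex degree with the clover-doubled generator count is also correct, since distinct generator copies in $\cT$ have distinct game sets and hence distinct reversals. I checked your local identity on representative configurations (a pair of vertices in type $C_n$ carrying a loop, and a pair in type $B_n$ carrying two half edges) and it holds, with the factor $2$ for clovers emerging exactly as you predict from the loop vector $2{\bf e}_i$.

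The one thing separating this from a complete proof is that the finite verification you defer \emph{is} the substantive content once the algebra is done, and it is not entirely free of traps: you must confirm that the listed generators (Figures \ref{F_genD}, \ref{F_genB}, \ref{F_genC}) are the \emph{only} neutral structures contributing on each support (otherwise the local identity would fail even though the global count is right); you must handle the degenerate cherries consisting of the negative and positive edge on the same pair, whose inner product is $0$; and in type $D_n$ the sign changes you invoke are not all in $W_{D_n}$, so you should phrase the symmetry reduction as a formal symmetry of the local identity rather than an application of the Weyl group. None of these threatens the argument, but the casework should actually be written out for each orbit of configurations on two and three vertices in each of $B_n$, $C_n$ and $D_n$ before the proof can be called complete.
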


In particular, $d(\Phi,{\bf s})= O(n^3)$. 

We observe that, as ${\bf s}$ moves closer to the 
center ${\bf 0}_n$ of the polytope $\Pi_\Phi$, the degree 
$d(\Phi,{\bf s})$ of $\ig(\Phi,{\bf s})$ increases. 
This is in line with the intuition that Coxeter tournaments
with ${\bf s}$ closer to ${\bf 0}_n$ (i.e., closer to being neutral)
should contain 
more copies of the (neutral) generators. 

Let us note that ${\bf s}$
with $\|{\bf s}\|^2=\|{\bf s}_\Phi\|^2$ 
are precisely the vertices of 
$\Pi_\Phi$. For such ${\bf s}$, 
we have $d(\Phi,{\bf s})=0$, 
in line with the fact there is a 
unique Coxeter tournaments
with score sequence ${\bf s}$. 
Indeed, such a tournament is transitive, 
in the sense that it 
contains no copy of a neutral generator, 
and so its 
interchange graph is single isolated vertex. 

In \cite{KMP23}, we observed that 
such a result also holds 
for graph tournaments, in relation 
to the standard permutahedron,
yielding a geometric interpretation of the classical result
(see, e.g., Moon \cite{Moo68}) 
that any two tournaments with the same win/score sequence
have the same number of cyclic triangles.  

In closing, let us emphasize the 
the neutral generators in 
Figures \ref{F_genD}, \ref{F_genB} and \ref{F_genC}
were identified in \cite{KMP23}. 
Theorem \ref{T_deg}, proved therein, identifies the degree
of the interchange graphs. 
However, in the current work, we will show 
(see Theorem \ref{T_MainInt} below) that the 
interchange graphs are connected. 
It is this result that justifies calling these structures
``generators,'' in the sense that the entire space
$\tour(\Phi,{\bf s})$ 
is obtained by iteratively reversing copies of these
specific neutral structures.

\subsection{Path coupling}
\label{S_tmix}

We recall that an 
aperiodic, irreducible 
discrete-time Markov chain $(X_n)$
on a finite state space $\Omega$
has a unique {\it equilibrium}
$\pi$ on $S$ such that, for all $x,y\in \Omega$, 
\[
p_n(x,y)=\P(X_n=y|X_0=x)\to \pi(y),
\]
as $n\to\infty$. 
We note that $\pi(y)$ is the asymptotic proportion of time
spent at state $y\in \Omega$. 
The maximal total variation distance 
from $\pi$ by time $n$, 
\[
\tau (n) 
= \max_{x\in \Omega}
\frac{1}{2}\sum_{y\in S}|p_n(x,y)-\pi(y)|, 
\]
is non-increasing.
The {\it mixing time} is defined as 
\[
t_{\rm mix}=\inf \{n\ge0:\tau(n)\le 1/4\}.
\]
A Markov chain is said to be {\it rapidly mixing}
if $t_{\rm mix}$ is bounded by a polynomial
in $\log |\Omega|$. 

Path coupling was introduced by 
Bubley and Dyer \cite{BD97}.
See, e.g., 
Aldous and Fill \cite[Sec.\ 12.1.12]{AF02} or 
Levin, Peres and Wilmer \cite[Sec.\ 14.2]{LPW09}   
for reformulations of the original result 
that are
closer in appearance to that of the following. 

Consider a connected graph $G=(V,E)$. 
The {\it graph distance} $\delta(x,y)$ 
is the minimal number of edges in 
a path between $x$ and $y$. The {\it diameter} is
$D=\max_{x,y\in V} \delta(x,y)$ 
is the maximal length of such a path 
in $G$. 

\begin{definition}\label{D_w}
We say that $G=(V,E)$ is {\it weighted} by $w$ 
if each edge $\{u,v\}\in E$ is assigned some 
{\it weight} $w(u,v)\ge 1$. 
The {\it weighted distance} $w(x,y)$ is the minimal total weight path
between $x$ and $y$. Likewise, $D_w=\max_{x,y\in V} w(x,y)$
is the {\it weighted diameter}. 
\end{definition}

The usual graph distance $\delta$ corresponds to the   
$w$
for which $w(u,v)=1$ for all $\{u,v\}\in E$. 

\begin{theorem}[Path coupling, \cite{BD97}]
\label{T_PC}
Consider a Markov chain $(X_n)$
on a connected graph $G=(V,E)$, weighted by $w$. 
Suppose that, for some $\alpha>0$, for each $\{x',x''\}\in E$ 
there is a coupling $(X_1',X_1'')$ with $(X_0',X_0'')=(x',x'')$ so that 
\[
\E[w(X_1',X_1'')]\le (1-\alpha)w(x',x'').
\]  
Then $(X_n)$ mixes in time 
$t_{\rm mix}= O(\alpha^{-1}\log D_w)$. 
\end{theorem}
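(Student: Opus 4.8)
The plan is to run the standard argument behind path coupling: first promote the given per-edge couplings to a one-step coupling between an \emph{arbitrary} pair of states, then iterate it in time to contract the expected weighted distance, and finally convert this into a bound on the total variation distance to $\pi$.

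\emph{Extending the coupling.} First I would fix $x,y\in V$ and choose a path $x=z_0,z_1,\ldots,z_k=y$ of minimal total weight, so that $\{z_{i-1},z_i\}\in E$ for every $i$ and $\sum_{i=1}^k w(z_{i-1},z_i)=w(x,y)$. For each edge $\{z_{i-1},z_i\}$ the hypothesis supplies a coupling of one step of the chain started from $z_{i-1}$ and from $z_i$; in each such coupling the law of the coordinate at $z_i$ is exactly one step of the chain from $z_i$, so the marginals agree at every interior vertex. The gluing lemma then produces a joint law for $(Z^{(0)}_1,\ldots,Z^{(k)}_1)$ under which each consecutive pair $(Z^{(i-1)}_1,Z^{(i)}_1)$ has the prescribed coupled law. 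Setting $(X_1,Y_1):=(Z^{(0)}_1,Z^{(k)}_1)$ gives a coupling of one step of the chain from $x$ and from $y$, and since $w$ is a shortest-path metric on the connected graph $G$ (hence obeys the triangle inequality) linearity of expectation yields
\[
\E[w(X_1,Y_1)]\le\sum_{i=1}^k\E[w(Z^{(i-1)}_1,Z^{(i)}_1)]\le(1-\alpha)\sum_{i=1}^k w(z_{i-1},z_i)=(1-\alpha)\,w(x,y).
\]

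\emph{Iterating and passing to total variation.} Next I would build the coupled chain $(X_n,Y_n)$ by applying the one-step coupling above to the current pair at each step. Conditioning on $(X_n,Y_n)$ and using the previous display gives $\E[w(X_{n+1},Y_{n+1})]\le(1-\alpha)\,\E[w(X_n,Y_n)]$, hence $\E[w(X_n,Y_n)]\le(1-\alpha)^n w(X_0,Y_0)\le(1-\alpha)^n D_w$ for every starting pair. Because each weight is at least $1$ we have $w(a,b)\ge\1\{a\ne b\}$, so Markov's inequality gives $\P(X_n\ne Y_n)\le(1-\alpha)^n D_w\le e^{-\alpha n}D_w$. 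The coupling inequality bounds $\|p_n(x,\cdot)-p_n(y,\cdot)\|_{\rm TV}$ by the same quantity for all $x,y$, and writing $\pi=\sum_y\pi(y)\,p_n(y,\cdot)$ (stationarity) and using convexity of the total variation norm gives $\tau(n)\le\max_{x,y}\|p_n(x,\cdot)-p_n(y,\cdot)\|_{\rm TV}\le e^{-\alpha n}D_w$. Thus $\tau(n)\le 1/4$ once $n\ge\alpha^{-1}\log(4D_w)$, i.e.\ $t_{\rm mix}=O(\alpha^{-1}\log D_w)$.

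\emph{Main obstacle.} The only genuinely delicate point is the first step: one must verify that the per-edge couplings can legitimately be glued into a single coupling of the chain from $x$ with the chain from $y$ — which works precisely because the marginal laws match at each interior vertex $z_i$ — and that the path is taken to be of \emph{minimal} weight, so that the telescoping of edge weights returns exactly $w(x,y)$ rather than something larger and the contraction factor $1-\alpha$ is not degraded. Everything after that (the time iteration, Markov's inequality, the coupling inequality, and convexity of $\|\cdot\|_{\rm TV}$) is routine bookkeeping.
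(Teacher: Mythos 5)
The paper does not prove this statement; it is quoted from Bubley and Dyer \cite{BD97}, with the reformulation taken from standard references such as \cite{AF02,LPW09}. Your argument is the standard textbook proof of path coupling --- gluing the per-edge couplings along a minimal-weight path, iterating the contraction, and passing to total variation via $w(a,b)\ge\1\{a\ne b\}$ and the coupling inequality --- and it is correct, including the two points that actually need care (matching marginals at interior vertices for the gluing, and using a geodesic so the telescoped bound is exactly $w(x,y)$).
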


Often this result is applied with $w=\delta$,
and, indeed, this will suffice for us in types $B_n$ and $D_n$. 
In this case, path coupling has the intuitive interpretation
that if the chain is ``contractive'' in expectation, 
then it is rapidly mixing.   

On the other hand, in the more complicated type $C_n$, 
we will select a careful re-weighting $w$
that takes into account some of the more intricate features
in the interchange graphs of this type. 

Finally, note that 
the type $C_n$ interchange graphs  
are, in fact, {\it multi}graphs. 
Specifically,  
some pairs of vertices 
(corresponding to clover reversals) 
are joined by double edges, as in 
Figures \ref{F_drum} and \ref{F_tamb} above. 
This is for technical convenience, 
as it makes the graph regular, and thereby 
our coupling procedure easier to explain. 
We note that 
Theorem \ref{T_PC}
still applies, since a Markov chain 
$(Y_n)$ on a multigraph $M$ 
with some double edges is 
equivalent to the Markov chain $(X_n)$ 
on the graph $G$, obtained by 
collapsing each double edge in $M$ into a single edge, 
and combining the two associated edge crossing probabilities.

\section{Main result}
\label{S_results}

Our main result shows that random walks 
rapidly mix on the Coxeter interchange graphs. 

\begin{theorem}
\label{T_Main}
Let $\Phi=B_n$, $C_n$ or $D_n$. Fix any ${\bf s}\in \score(\Phi)$. 
Then lazy simple random walk $(\cT_n:n\ge0)$
on 
$\ig(\Phi,{\bf s})$ 
mixes in time 
$t_{\rm mix}= O(n^3\log n)$ if $\Phi=B_n$ or $D_n$, 
and in time $t_{\rm mix}=  O(n^4\log n)$ if $\Phi=C_n$.
\end{theorem}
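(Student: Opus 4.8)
The plan is to apply the path coupling machinery of Theorem~\ref{T_PC}. By Theorem~\ref{T_MainInt} (to be proven), each $\ig(\Phi,\mathbf s)$ is connected with polynomial diameter, so it suffices to exhibit, for every edge $\{\cT',\cT''\}$ of $\ig(\Phi,\mathbf s)$, a coupling of one lazy step of simple random walk from the two endpoints that contracts the (possibly re-weighted) distance by a factor $1-\alpha$, where $\alpha^{-1}$ is polynomial in $n$. Since two adjacent tournaments $\cT',\cT''=\cT'*\cG$ differ by reversing the games of a single generator $\cG$, a step of the walk from each is a choice of a random generator copy in the respective tournament. The core of the argument is to build a near-bijection between the generator copies available at $\cT'$ and those at $\cT''$ that mostly ``agrees'' off the small set of games in $\cG$ (and in the structures touching $\cG$), keeping the coupled pair at distance $1$ after the step in the typical case, while the rare cases where the distance grows to $2$ (or the beneficial cases where it drops to $0$) are controlled. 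Because $\ig(\Phi,\mathbf s)$ is $d(\Phi,\mathbf s)$-regular with $d=O(n^3)$ by Theorem~\ref{T_deg}, each individual generator is selected with probability $\Theta(n^{-3})$, so each ``bad'' coincidence contributes $O(n^{-3})$ to the drift; the number of bad cases will be $O(1)$ in types $B_n,D_n$ and $O(n)$ in type $C_n$, giving $\alpha=\Theta(n^{-3})$ and $\alpha=\Theta(n^{-4})$ respectively, hence the stated mixing times once $\log D_w = O(\log n)$ (which follows since $|\tour(\Phi,\mathbf s)| \le 2^{O(n^2)}$, so $\log|\Omega| = O(n^2)$ and $D_w$ is at worst polynomial).

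The key steps, in order, are as follows. First I would fix an edge $\{\cT',\cT'*\cG\}$ and classify the ``local pictures'' near $\cG$: which other generator copies of $\cT'$ share one or more games with $\cG$, and how reversing $\cG$ changes each. This is exactly the structural classification of subgraphs alluded to in the introduction (the ``diamonds'' of Figures~\ref{F_diamonds1},~\ref{F_diamonds2} and the crystals); I would prove the needed adjacency/intersection lemmas here, in particular the fact (Lemma~\ref{L_crystals2}) that two crystals share at most one edge, as well as structural facts bounding how many generator copies can meet $\cG$. Second, for each type of local picture I would specify the coupling rule: generator copies disjoint from $\cG$ are paired with themselves and keep distance $1$; the ``switch''-type interactions (crystals converting single edges to double edges) are paired so that, although the reversed generators differ, the resulting tournaments $\cT_1',\cT_1''$ remain adjacent; and the finitely many genuinely unpairable copies are handled by explicit ``detour'' couplings whose cost is absorbed by the re-weighting. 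Third, I would make the choice of weights $w$ precise in type $C_n$: edges arising from clover reversals (double edges) and edges lying in crystals receive a slightly inflated weight, chosen so that the switch moves are distance-nonincreasing in the weighted metric. Fourth, I would sum the contributions: $\E[w(\cT_1',\cT_1'')] - w(\cT',\cT'') = \sum (\text{pairings with }+\Delta w) - \sum(\text{pairings with }-\Delta w)$, and verify the negative (contractive) term, of order $1/d \asymp n^{-3}$, dominates, yielding $\E[w(\cT_1',\cT_1'')] \le (1-\alpha)w(\cT',\cT'')$. Laziness is used to avoid parity obstructions and to ensure the walk is aperiodic; it costs only a constant factor.

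The main obstacle I expect is Step~2–3 in type $C_n$: constructing the coupling so that it is simultaneously (a) globally consistent — a single rule that works no matter which generator is hit and no matter the configuration of crystals and clovers around $\cG$ — and (b) contractive under a \emph{fixed} weighting $w$ that does not itself depend on $\cT'$. These two requirements pull against each other: making the coupling agree on more generator copies tends to require case distinctions that only make sense locally, while a global weighting has to be calibrated once and for all. The crystal structures are the crux, since they behave like switches toggling single/double edges, so the weight increment along a crystal edge must exactly compensate the $\pm$ discrepancy a switch move introduces; getting a consistent such calibration is what forces the extra factor of $n$ (hence $O(n^4\log n)$ rather than $O(n^3\log n)$) and is where the delicate combinatorics of Lemma~\ref{L_crystals2} — that crystals are ``almost disjoint'' — is indispensable, since overlapping crystals would make the weight increments incoherent and break contractivity. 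Types $B_n$ and $D_n$ avoid clovers and hence crystals, so there $w=\delta$ works and the argument is a more direct generalization of McShine's type-$A$ coupling (Figure~\ref{F_coup_BD}), with the balanced triangle $\Delta_b$ and the neutral pairs $\Omega_i$ handled by analogues of the classical cyclic-triangle pairing.
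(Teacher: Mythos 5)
Your proposal follows essentially the same route as the paper: path coupling driven by a bijective edge pairing, a classification of the local two-step interchange networks (the diamonds and crystals), the key fact that two crystals share at most one (double) edge, and a re-weighted metric in type $C_n$ calibrated by the maximal number of crystals through a double edge, which is $O(n)$ and is exactly the source of the extra factor of $n$. The one point where the paper differs from your sketch is the direction of the re-weighting: it gives \emph{every} single edge weight $1+1/\gamma$ and keeps each edge of a double edge at weight $1$ (so double-edge adjacency is the cheap state and a switch move out of a crystal single edge \emph{gains} $1/\gamma$), whereas inflating the double and crystal edges as you tentatively propose would let a pair on a non-crystal single edge drift onto a heavier crystal edge and lose contraction.
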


Rapid mixing in type $A_{n-1}$, proved in \cite{KTV99,McS00}, 
follows as a special case of our proof of this result 
in type $D_n$. 

We note that the classification of 
$\score(\Phi)$ in \cite{KS23} is constructive, 
which allows us to initialize the random walk in the first place. 

In fact, we will prove sharper bounds 
(see Theorem \ref{T_MainBD} and \ref{T_MainC} below).  
In types $B_n$ and $D_n$, we will show that 
$t_{\rm mix}= O(d\log n)$, where
$d$ is the degree (see Theorem \ref{T_deg} above) of 
the interchange graph $\ig(\Phi,{\bf s})$. 
The result above follows, since $d=O(n^3)$. 
In type $C_n$, we will show
that $t_{\rm mix}= O(\gamma d\log n)$, where $\gamma$
is a certain quantity 
(see Lemma \ref{L_crystals_ee}) 
satisfying $\gamma\le \min\{d,2n\}$. 
We call $\gamma$ the {\it maximal crystal degree}
of the interchange graph. Roughly speaking, it is
maximal number of crystals, all containing the same 
double edge. This quantity is related to the re-weighting $w$
that we will use in applying Theorem \ref{T_PC}
in type $C_n$. 
We note that re-weighting arguments have been used before, e.g., 
in the work of Wilson \cite{Wil04}.

\section{Connectivity} 
\label{S_DecompRev}

Before proving Theorem \ref{T_Main},  
we will first establish the following combinatorial result, 
giving further structural information (beyond its regularity, given 
by Theorem \ref{T_deg})
about the Coxeter interchange graphs. 

\begin{theorem}
\label{T_MainInt}
Let $\Phi=B_n$, $C_n$ or $D_n$. 
Fix any ${\bf s}\in \score(\Phi$). 
The Coxeter interchange graph $\ig(\Phi,{\bf s})$
is connected and its diameter 
$D= O(n^2)$. 
\end{theorem}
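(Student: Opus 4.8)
The plan is to prove connectivity and the diameter bound together by a constructive argument that reduces an arbitrary Coxeter tournament $\cT\in\tour(\Phi,{\bf s})$ to a canonical representative $\cT_\star=\cT_\star(\Phi,{\bf s})$ using $O(n^2)$ generator reversals. Since generator reversals are involutive moves (reversing the same copy $\cG$ twice returns to the start), a path of length $\ell$ from $\cT$ to $\cT_\star$ gives a path of the same length back, so two tournaments with the same score sequence are joined by a path of length $O(n^2)$ through $\cT_\star$; this yields both that $\ig(\Phi,{\bf s})$ is connected and that $D=O(n^2)$. The natural choice of $\cT_\star$ is the essentially transitive tournament produced by the constructive proof of the classification of $\score(\Phi)$ in \cite{KMP23} (the one realizing ${\bf s}$ with no reversible neutral structure beyond what parity forces), so that the remaining task is purely: given any $\cT$, walk it to $\cT_\star$.

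The reduction itself I would organize by a potential-function / greedy argument. First I would handle the ``half edges, loops and positive edges'' coordinates: using the neutral pairs $\Omega_1,\Omega_2,\Omega_3$ in type $B_n$ and the clovers $\Theta_1,\Theta_2$ in type $C_n$ (together with balanced triangles $\Delta_b$), show that in $O(n)$ moves one can bring the orientations of all half edges / loops, and the collaborative (positive) edges, into a fixed standard configuration determined by ${\bf s}$ — this is the step that controls the ``extra'' coordinates absent in type $A_{n-1}$. Second, once those are standardized, the residual freedom lives essentially in the competitive (negative) edges, i.e. in a classical sub-tournament, and there the familiar fact applies: any tournament can be made transitive by $O(n^2)$ reversals of cyclic triangles $\Delta_c$ (sort by repeatedly reversing a $3$-cycle that fixes an inversion, à la bubble sort / the Brualdi--Li argument used in \cite{KTV99,McS00}). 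Combining, the total number of moves is $O(n)+O(n^2)=O(n^2)$, and crucially these two phases can be done so the first phase's standardization is preserved by the second (the triangle reversals in the negative-edge sub-tournament do not touch half edges, loops, or positive edges), which is what makes the bound clean.

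The main obstacle I expect is phase one: showing that the half-edge/loop and positive-edge orientations can be corrected \emph{independently} and in only $O(n)$ steps, because the available generators couple several edge types at once (e.g. a balanced triangle $\Delta_b$ involves two positive edges and a negative edge; the clovers and neutral pairs entangle a loop or half edge with competitive edges). The correct way through is probably to fix a spanning structure (a fixed vertex $1$, say) and route every correction through it: e.g. flip the half edge at $i$ by combining a neutral pair based at $\{1,i\}$ with a compensating move along edge $\{1,i\}$, then undo the side effect with a triangle on $\{1,i,j\}$ — each individual fix costing $O(1)$ moves and affecting only a bounded, controlled set of coordinates, so that iterating over the $n$ vertices costs $O(n)$. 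One must check, type by type (with $C_n$ the delicate case because of the loops/clovers), that a consistent order of corrections exists so earlier fixes are not destroyed; this bookkeeping is the real content. A secondary, smaller obstacle is verifying that the canonical $\cT_\star$ from \cite{KMP23} is genuinely reachable — i.e. that the parity conditions in types $C_n$ and $D_n$ never obstruct the phase-one standardization — but this should follow directly from the fact that $\cT$ and $\cT_\star$ share the same ${\bf s}$, hence the same parities, so the parity-sensitive coordinates already agree modulo $2$ and only ``even'' corrections are ever required.
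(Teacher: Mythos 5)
Your high-level strategy (reduce every $\cT$ to a canonical $\cT_\star$ and concatenate the two paths) is legitimate, and it is close in spirit to what the paper does: the paper takes two arbitrary tournaments $\cT,\cT'$ with the same score sequence, observes that the difference $\cD=\cT\setminus\cT'$ is neutral, and proves a ``reversing lemma'' (Lemma \ref{L_rev}): any neutral sub-tournament with $\ell\ge 3$ games can be reversed in at most $\ell-2$ generator reversals, giving $D=O(n^2)$ since $|\cD|=O(n^2)$. However, your proposal has a genuine gap precisely where you locate the ``main obstacle,'' and it is not mere bookkeeping. A generator reversal that corrects one coordinate of the difference (say the half edge at $i$, via a neutral pair on $\{1,i\}$) unavoidably perturbs other coordinates (the half edge at $1$ and a game on $\{1,i\}$); a triangle can repair the game but \emph{not} the half edge at $1$, so you accumulate a residual discrepancy at the hub vertex whose eventual cancellation you never establish. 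That cancellation is not automatic: it follows from a structural fact about neutral differences (e.g.\ that the half edges appearing in $\cD$ pair up as the endpoints of open neutral trails), and proving such facts is exactly the content of the paper's Z-frame machinery (Lemmas \ref{L_general_decomp}, \ref{L_deg24}, \ref{L_decomp4}), which your proposal has no substitute for. The induction in Lemma \ref{L_rev} then splits a neutral trail at a ``middle'' game $g$, reverses one neutral half using $g$, and reverses the other half using $g^*$ — this is the mechanism that makes all side effects cancel by construction, and it is the missing idea in your sketch.

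Two smaller points. First, your claimed $O(n)$ bound for phase one cannot be right as stated: there are $\Theta(n^2)$ collaborative (positive) edges, two tournaments with the same score sequence can disagree on $\Theta(n^2)$ of them, and each generator touches at most three games, so standardizing the positive edges alone may require $\Omega(n^2)$ moves. This does not hurt the final $O(n^2)$ diameter bound, but it shows the phase decomposition is not doing the work you attribute to it. Second, your phase two is fine in isolation (a neutral difference supported on negative edges decomposes into directed cycles, each reversible by cyclic triangles as in the classical argument), but note that even in the classical case one needs games \emph{outside} the difference as chords; the paper's reversing lemma is stated for $\cT$ on the complete signed graph $\cK_\Phi$ for exactly this reason, and any correct version of your argument must track the same issue in phase one.
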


This result is a corollary of 
Lemma \ref{L_rev} (the ``reversing lemma'') 
proved at the end of this section. 
A number of preliminaries are required. First, 
in the next subsection, we will find a way of encoding
Coxeter tournaments as special types of directed graphs. 

\subsection{Z-frames} 
\label{S_Zframes}

Oriented signed graphs 
were studied by Zaslavsky \cite{Zas91}. 
From this point of view (see \cite[Fig.\ 1]{Zas91}), 
each oriented signed edge 
in an oriented signed graph $\cS$ is the union of 
at most two 
directed half edges.
We modify this idea, by adding 
a named endpoint to each half-edge, 
which we call a \emph{match}. 
This will allow us to prove 
certain structural facts using graph theory techniques. 
We call such a structure a \emph{Z-frame}. 

\begin{definition}
A {\it Z-frame} ${\bf Z}$
is a directed, bipartite
multigraph on disjoint sets of 
{\it players} $V$ and {\it matches} $M$, 
such that every match has degree $1$ or~$2$.
\end{definition}

This concept is fairly general, and not all Z-frames
correspond to a Coxeter tournament $\cT$ on some 
$\cS\subset\cK_\Phi$. However, 
each such $\cT$
has a unique representation as a Z-frame 
${\bf Z}(\cT)$. We think of ${\bf Z}(\cT)$ as revealing 
the ``inner directed graph structure''
of $\cT$. 
Players in ${\bf Z}(\cT)$
correspond to vertices in $\cT$. 
Recall that each game in $\cT$ corresponds to an 
oriented signed edge. 
Each such game is associated with a match 
in ${\bf Z}(\cT)$. 

We will think of edges directed away/toward players $v \in V$ 
as positively/negatively charged. (That being said, 
positive/negative edges in a Z-frame should not be confused
with positive/negative edges in a Coxeter tournament.)

We say that $\bf Z$ is {\it neutral} if all players $v\in V$ 
have net zero charge, i.e.,  
${\rm deg}^+(v)-{\rm deg}^-(v)=0$, 
where ${\rm deg}^\pm(v)$ is the number of positive/negative
edges incident to $v$. 
We put ${\rm deg}(v)={\rm deg}^+(v)+{\rm deg}^-(v)$. 
Note that, if $\bf Z$ is {\it neutral} then ${\rm deg}(v)$ is even.

\begin{definition}
Let $\cT$ be a Coxeter tournament 
on a signed graph $\cS$ 
on $[n]$. 
Let ${\bf Z}(\cT)$ be the Z-frame on $V=[n]$ 
and $M = \{m_e\colon e \in \cS\}$, with the following 
directed edges: 
\begin{itemize}[]
\item If a competitive game $e_{ij}^- \in \cS$ 
between players $i$ and $j$
is won (resp.\ lost) 
by $i$ in $\cT$, we include 
two directed edges $i\rightarrow m_{ij}^- \rightarrow j$ 
(resp.\ $i\leftarrow m_{ij}^- \leftarrow j$).

\item If a collaborative game $e_{ij}^+ \in \cS$ 
between players $i$ and $j$
is won (resp.\ lost) in $\cT$, we 
include two directed edges 
$i\rightarrow m_{ij}^+ \leftarrow j$ 
(resp.\ $i\leftarrow m_{ij}^+ \rightarrow j$). 

\item If a half edge solitaire game $e_i^h \in \cS$ 
by player $i$ 
is won (resp.\ lost) in $\cT$, 
we include one directed edge 
$i\rightarrow m_i^h$ (resp.\ $i\leftarrow m_i^h$). 

\item If a loop solitaire game $e_i^\ell \in \cS$ 
by player $i$
is won (resp.\ lost) in $\cT$, we 
include two directed edges 
$i\rightrightarrows m_i^\ell$ 
(resp.\ $i\leftleftarrows m_i^\ell$).
\end{itemize}
\end{definition}

See Figure \ref{F_ZT} 
for an example of a 
Coxeter tournament $\cT$ and its 
corresponding Z-frame
${\bf Z}(\cT)$.

\begin{figure}[h!]
\includegraphics[scale=1.1]{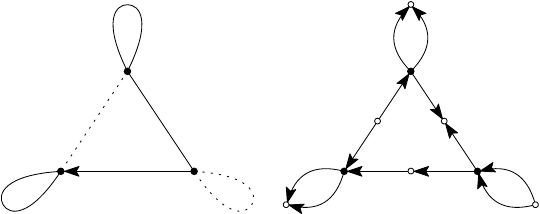}
\caption{A $C_3$-tournament $\cT$ and its Z-frame
${\bf Z}(\cT)$. Players/matches are drawn as black/white dots.}
\label{F_ZT}
\end{figure}

The reason for the above terminology is that 
directed edges in ${\bf Z}(\cT)$ with a positive/negative
charge correspond to positive/negative contributions to 
the score sequence ${\bf s}(\cT)$. 
Note that, in competitive/collaborative games, the two charges
are opposing/aligned 
(regardless of the outcome of the game).
See Figure \ref{F_Zskel}.

 \begin{table}[h!]
 \begin{center}
 \caption{
 Signed edges $e$ in a signed graph $\cS$
 can be oriented in one of two ways 
 $w_e\in\{0,1\}$ by a Coxeter tournament
 $\cT$. The contribution to the score sequence 
 ${\bf s}(\cT)$ and representation in the Z-frame ${\bf Z}(\cT)$
 are given below. 
  }\label{F_Zskel}
 \begin{tabular}{l|l|l|l}

 $e$&$w_e$&${\bf s}$ &${\bf Z}$  \\  \hline
 $e_{ij}^-$&1&$+{\bf e}_{ij}^-/2=+{\bf e}_i/2-{\bf e}_j/2$&$i\rightarrow m_{ij}^- \rightarrow j$\\
 &0&$-{\bf e}_{ij}^-/2=-{\bf e}_i/2+{\bf e}_j/2$&$i\leftarrow m_{ij}^- \leftarrow j$\\\hline
 $e_{ij}^+$&1&$+{\bf e}_{ij}^+/2=+{\bf e}_i/2+{\bf e}_j/2$&$i\rightarrow m_{ij}^+ \leftarrow j$\\
 &0&$-{\bf e}_{ij}^+/2=-{\bf e}_i/2-{\bf e}_j/2$&$i\leftarrow m_{ij}^+ \rightarrow j$\\\hline
 $e_{i}^h$&1&$+{\bf e}^h_i/2=+{\bf e}_i/2$&$i\rightarrow m_i^h$\\
 &0&$-{\bf e}^h_i/2=-{\bf e}_i/2$&$i\leftarrow m_i^h $\\\hline
 $e_{i}^\ell$&1&$+{\bf e}^\ell_i/2=+{\bf e}_i$&$i \rightrightarrows m_i^\ell$\\
 &0&$-{\bf e}^\ell_i/2=-{\bf e}_i$&$i\leftleftarrows m_i^\ell$ 
 \end{tabular}
 \end{center}
 \end{table}

\subsection{Decomposing Z-frames}

Our aim is to decompose neutral Coxeter tournaments 
into irreducible neutral parts.  
In this section, we will do this for Z-frames in general.

Recall that a {\it trail} is a walk in which no 
edge is visited twice.

\begin{definition}
We call a trail ${\bf T}$ of (directed) edges in a Z-frame 
\emph{closed} if it starts and ends at the same vertex,
and otherwise we call it \emph{open}. 
A trail is \emph{neutral} if the two consecutive edges 
at each player $v\in V$ have opposite  charges. 
The {\it length} $\ell$ of a trail is its number of matches.
\end{definition}

Note that 
the edges along a trail 
in a $Z$-frame 
do not repeat, and 
are connected to each other in alternation by a player/match. 
Also note that neutral open trails 
start and end at distinct final matches
(the left and rightmost matches along the trail). 
See Figure \ref{F_trail}.

\begin{figure}[h!]
\includegraphics[scale=1.1]{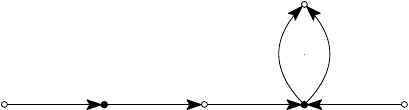}
\caption{An open neutral trail of length 
$\ell=4$.}
\label{F_trail}
\end{figure}

\begin{lemma} \label{L_general_decomp}
Any 
neutral Z-frame 
${\bf Z}$ can be decomposed into an edge-disjoint union 
${\bf Z} = \bigcup_i {\bf T}_i$ of 
closed neutral trails and open neutral trails, 
such that no two open trails have a common final match. 
\end{lemma}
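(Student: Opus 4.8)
The plan is to decompose ${\bf Z}$ greedily by repeatedly extracting maximal neutral trails. The key local fact is that at every player $v \in V$ the neutrality of ${\bf Z}$ says $\deg^+(v) = \deg^-(v)$, so the edges at $v$ split evenly into positively and negatively charged ones; I will fix once and for all an arbitrary pairing (a perfect matching) between the positive edges at $v$ and the negative edges at $v$. Think of this as ``wiring'' $v$ internally: whenever a trail enters $v$ along one edge of a pair, it must leave along the partner edge, which automatically enforces the neutrality condition (opposite charges at each player) and guarantees that a trail never gets stuck at a player — it can only terminate at a match of degree~$1$, i.e.\ a final match. Matches have degree $1$ or $2$, so at a degree-$2$ match a trail always continues, while a degree-$1$ match is a dead end.

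With this wiring fixed, I extract trails one at a time. Start at any match carrying an edge not yet used. Follow the wiring: at each match of degree $2$ pass through to the other edge; at each player follow the fixed pairing to the partner edge; stop when you either return to a degree-$1$ match or close up into a cycle. Because no edge is ever reused (the pairing at each player and the pass-through at each degree-$2$ match are bijective on the unused edges), the process cannot revisit an edge, so it must terminate: either as a \emph{closed} trail (it returns to its starting vertex and the edge structure closes up) or as an \emph{open} trail beginning and ending at degree-$1$ matches. In the open case the two endpoints are distinct final matches — its left- and rightmost matches — exactly as in Figure~\ref{F_trail}. Delete the extracted trail's edges from ${\bf Z}$; the remainder is still a neutral Z-frame (deleting a paired pair of edges at each traversed player preserves $\deg^+ = \deg^-$, and the fixed pairing restricts to the smaller graph), so we iterate until no edges remain. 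This yields the edge-disjoint decomposition ${\bf Z} = \bigcup_i {\bf T}_i$ into closed and open neutral trails.

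It remains to ensure that no two open trails share a common final match. A match $m$ of degree~$1$ is the final endpoint of exactly one open trail: once its unique incident edge is consumed, $m$ is incident to no further edges and cannot be an endpoint of any later trail. A match $m$ of degree~$2$ is \emph{never} a final match of an open trail, since any trail reaching $m$ passes through to its other edge; $m$ can only appear in the interior of a (closed or open) trail, or — if its two edges end up in different trails — as an interior point of two distinct trails, but not as an endpoint of either. Hence every final match of an open trail has degree~$1$ and belongs to exactly one open trail, giving the required disjointness of final matches.

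The main obstacle is making the ``the process always terminates at a final match, never gets stuck at a player'' step airtight: one must verify that at the moment a trail is being built, every player it passes through still has its incident edges evenly paired by the restriction of the fixed matching, so there is always a forced continuation unless one is at a degree-$1$ match. This follows because we remove trails one at a time and each removal takes, at every affected player, a matched pair (one $+$ edge and one $-$ edge) out together — but it is worth stating this invariant explicitly and checking it is preserved, since it is precisely what guarantees both termination and the neutrality of each extracted trail. A secondary, purely bookkeeping point is to confirm the corner case of loop solitaire games (the $i \rightrightarrows m_i^\ell$ double edges): these give a degree-$2$ match with both edges of the same charge, so a trail entering along one exits along the other and the ``opposite charges at each player'' condition is about the \emph{player} end, not the match end — consistent with the definition of a neutral trail and not an issue, but worth a sentence.
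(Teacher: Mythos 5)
Your starting point --- pairing the positively and negatively charged edges at each player --- is exactly the paper's first move, and it does give an edge-disjoint decomposition into neutral trails. The gap is in how you upgrade this to a decomposition in which no two open trails share a final match. Your extraction starts ``at any match carrying an edge not yet used'' and follows the wiring in one direction. If you start at a degree-$2$ match $m$ lying in the interior of an open component of the transition system (or if a trail arrives at a degree-$2$ match whose other edge was already consumed by an earlier extraction), the extracted trail terminates with $m$ as one of its final matches, and the later trail that consumes the other edge at $m$ also has $m$ as a final match --- precisely the configuration the lemma forbids. Your attempted rescue, that a degree-$2$ match whose two edges end up in different trails is ``an interior point of two distinct trails, but not an endpoint of either,'' is false: a trail using exactly one of the two edges at a match must have that match as an endpoint, since passing through a match consumes both of its edges. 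So the clause that makes the lemma nontrivial is not established.

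The repair is small but must be stated. Define each trail to be a full orbit of your transition system, i.e.\ a connected component of the auxiliary graph on the edges of ${\bf Z}$ in which two edges are adjacent when they are partners at a common player or are the two edges of a common degree-$2$ match. Every edge has one player-partner and at most one match-partner, so components are paths or cycles; cycles are closed neutral trails, paths are open neutral trails whose two extreme edges meet degree-$1$ matches, and since the two edges at any degree-$2$ match always lie in the same component, no degree-$2$ match is ever a final match and no two open trails share one. Alternatively, the paper sidesteps the issue with an extremal argument: among all edge-disjoint decompositions into neutral trails (one exists by the per-player pairing), take one with the minimal number of trails; two open trails with a common final match could be concatenated there into a single longer neutral trail (the junction is at a match, where no charge condition is imposed), contradicting minimality.
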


\begin{proof}
Let $v\in V$. Since {\bf Z} is neutral, 
${\rm deg}_+(v)={\rm deg}_-(v)$. 
Therefore we may pair  
each edge directed away from $v$ with an edge 
directed toward $v$. 
Note that any such pair is a neutral trail. 
We let $\cP_v$ denote the set of all such pairs. Then 
$\bigcup_{v\in V}\cP_v$ 
is a decomposition of ${\bf Z}$ into an edge-disjoint union  
of neutral trails.  
Select a decomposition ${\bf Z}=\bigcup_i{\bf T}_i$ 
with the minimal number of neutral trails. 
Open trails in this decomposition cannot 
have a common final match, 
as otherwise they could be concatenated to form a single
longer trail, 
contradicting minimality. 
\end{proof}

Let ${\bf Z}$ be a neutral Z-frame. We say that ${\bf Z}$ is 
{\it reducible} if it contains a 
non-empty neutral ${\bf Z}'\subsetneq {\bf Z}$. 
Otherwise, ${\bf Z}$ is {\it irreducible}. 

Each vertex $v$ in a neutral trail has even degree, 
since $\mathrm{deg}^+(v)=\mathrm{deg}^-(v)$ and 
$\mathrm{deg}(v)=\mathrm{deg}^+(v)+\mathrm{deg}^-(v)$. 
As discussed, vertices $v$ in a neutral ${\bf Z}$
have even $\mathrm{deg}(v)$. Our next result
observes that if ${\bf Z}$ is irreducible, then 
all $\mathrm{deg}(v)\le 4$.

\begin{lemma}\label{L_deg24}
Let ${\bf Z}$ be an irreducible neutral Z-frame.
Then ${\bf Z}$ is a neutral trail and $\mathrm{deg}(v) \in \{0,2,4\}$
for all $v\in V$. 
\end{lemma}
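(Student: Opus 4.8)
The plan is to argue by contrapositive and extremality. First I would apply Lemma \ref{L_general_decomp} to the irreducible neutral Z-frame ${\bf Z}$, obtaining an edge-disjoint decomposition ${\bf Z}=\bigcup_i {\bf T}_i$ into closed and open neutral trails with no two open trails sharing a final match. Since each ${\bf T}_i$ is itself a non-empty neutral Z-frame, irreducibility forces the decomposition to be a single trail: ${\bf Z}={\bf T}_1$ is a neutral trail. This settles the first assertion and, as noted just before the lemma, gives $\mathrm{deg}(v)=\mathrm{deg}^+(v)+\mathrm{deg}^-(v)=2\mathrm{deg}^+(v)$ is even at every player $v$; so it remains only to rule out $\mathrm{deg}(v)\ge 6$.

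Suppose toward a contradiction that some player $v$ has $\mathrm{deg}(v)\ge 6$, i.e.\ $\mathrm{deg}^+(v)=\mathrm{deg}^-(v)=k\ge 3$. The idea is to peel off a proper neutral sub-Z-frame at $v$, contradicting irreducibility. Concretely, I would trace the trail ${\bf Z}$ through $v$: the trail visits $v$ exactly $k$ times, and each visit pairs an incoming edge with an outgoing edge of opposite charge (one positive, one negative, by neutrality of the trail at $v$). Pick one such visit — say it uses the edge-pair $(e^-,e^+)$ at $v$ — and follow the trail \emph{forward} from the outgoing edge of this visit until it next returns to $v$; it must return, because ${\bf Z}$ is a single trail through $v$ with $k\ge 2$ visits, and the trail is finite. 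The segment ${\bf Z}'$ of the trail strung between two consecutive visits to $v$ is then a closed walk based at $v$; it is edge-disjoint from the rest, uses strictly fewer than all matches (since $k\ge 3$ leaves at least one more pair of edges at $v$ untouched, hence at least one match outside ${\bf Z}'$), and is non-empty. The one subtlety is neutrality of ${\bf Z}'$: at interior players of the segment the trail already alternates charge, so those are fine, and at the basepoint $v$ the segment enters and leaves exactly once, contributing one incoming and one outgoing edge to $v$ within ${\bf Z}'$ — but I must check these have opposite charge. If the chosen visit itself is the matching-up of the segment's two ends (which it is, by construction: we entered the segment along the outgoing half of the visit and return along the incoming half of that same visit's partner, or of a later visit), the two ends at $v$ form a (incoming, outgoing) pair of opposite charges, so ${\bf Z}'$ is neutral at $v$ as well. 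If instead the return uses a different visit, I would relabel: close up the walk using whichever return gives opposite charges at $v$ — since there are $k$ outgoing edges split into $k/$... actually since $\mathrm{deg}^+(v)=\mathrm{deg}^-(v)$, a short parity/counting argument guarantees such a closing exists. Thus ${\bf Z}'\subsetneq {\bf Z}$ is a non-empty neutral sub-Z-frame, contradicting irreducibility; hence $\mathrm{deg}(v)\le 4$ for all $v$, and combined with evenness we get $\mathrm{deg}(v)\in\{0,2,4\}$.

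The main obstacle I anticipate is exactly the bookkeeping in the last paragraph: making precise the "follow the trail until it returns to $v$ and close it up with opposite charges" step so that the extracted ${\bf Z}'$ is genuinely neutral at $v$ (not just at interior vertices). The cleanest route is probably to set up the argument globally rather than locally: take the trail ${\bf Z}$, list its $k$ visits to $v$, and observe that these visits partition the matches of the trail into $k$ arcs (reading the trail cyclically if it is closed, or with the two dangling ends handled separately if it is open); each arc together with the visit structure at $v$ is a candidate closed neutral sub-frame, and at least one of the $k\ge 3$ arcs is a proper subset. One then only needs a single arc whose two endpoint-edges at $v$ have opposite charge, and since $v$ has equally many positive and negative incident edges, a pigeonhole on the cyclic sequence of charges around the visits produces one. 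I would present this version to keep the argument short and transparent.
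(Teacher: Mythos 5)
The first half of your argument is correct and matches the paper: Lemma \ref{L_general_decomp} plus irreducibility forces ${\bf Z}$ to be a single neutral trail, and neutrality makes every $\deg(v)$ even. The gap is in the step that rules out $\deg(v)\ge 6$, specifically in the ``cleanest route'' you settle on at the end. You claim that because $\deg^+(v)=\deg^-(v)$, a pigeonhole produces a \emph{single} arc between consecutive visits to $v$ whose two endpoint-edges at $v$ have opposite charges. This is false. Take a vertex of degree $8$ whose incident edges, in the order the trail meets them, carry charges $+,+,-,-,+,+,-,-$. Trail-neutrality holds (the consecutive pairs at the pass-through visits, in positions $(2,3),(4,5),(6,7)$ and cyclically $(8,1)$, all have opposite charges) and there are equally many positive and negative edges, yet the four arcs have endpoint charges $(+,+),(-,-),(+,+),(-,-)$: no single arc is neutral at $v$, so your pigeonhole yields nothing and $\deg(v)=8$ is not excluded. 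Your earlier, vaguer version (``close up the walk using whichever return gives opposite charges'') points in the right direction but defers exactly the needed existence statement to an unstated ``parity/counting argument,'' and moreover once the closed-up walk spans several arcs it revisits $v$ in its interior, so your assertion that the extracted segment ``enters and leaves exactly once'' also fails (harmlessly, as it happens, since each intermediate pass-through contributes net charge zero — but you never say this).

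The correct statement to pigeonhole on is about \emph{unions} of consecutive arcs. Since intermediate visits contribute zero net charge, the union of arcs $j,\dots,j'$ is neutral at $v$ precisely when its opening departure-edge and closing arrival-edge have opposite charges; writing $a_1,\dots,a_k$ for the $k$ departure charges at $v$, trail-neutrality translates this into $a_j=a_{j'+1}$. With $k\ge 3$ two of the $a_i$ coincide, and the arcs strung between them form a proper nonempty neutral sub-Z-frame, contradicting irreducibility (in the example above, arcs $1$ and $2$ together carry charges $+,+,-,-$ at $v$). The paper reaches the same conclusion more directly by reading off the first four charges at $v$ along the trail: either the first two are opposite, in which case the first arc is a neutral sub-frame and irreducibility gives $\deg(v)=2$; or they are both (say) positive, forcing the third to be negative, and then the fourth must also be negative — otherwise the second arc alone would be a proper neutral sub-frame — whereupon the prefix of the trail through the fourth edge is a closed neutral sub-frame and irreducibility forces the trail to end there with $\deg(v)=4$. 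Either repair works, but as written your proof does not close the case $\deg(v)\ge 6$.
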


\begin{proof}
By Lemma \ref{L_general_decomp}, ${\bf Z}$ is a neutral trail, 
and so all $\mathrm{deg}(v)$ are even. 
If $v$ is an isolated vertex that plays no solitaire games, then 
$\mathrm{deg}(v)=0$. Otherwise, if ${\bf Z}$ is non-trivial, 
we will argue that $\mathrm{deg}(v)\in\{2,4\}$ are the only possibilities. 
To see this, start at any $v$ along the trail, and then follow the trail. 
Consider the charges of the edges incident to $v$, in the order 
in which they are visited by the trail. 
If the first and second charges are opposing, 
then the trail is complete with  $\mathrm{deg}(v)=2$. 
Otherwise, suppose they are both positive (the 
other case is symmetric). Since 
${\bf Z}$ is neutral, the 3rd charge is negative. 
Finally, consider the 4th charge. 
If it were positive, then 
the sub-trail between the third negative edge incident to $v$ and 
the forth positive edge incident to $v$ would be neutral. 
Therefore, the 4th charge is negative, and so
the trail is complete with $\mathrm{deg}(v)=4$. 
\end{proof}

\subsection{Reversing Coxeter tournaments}
\label{S_DecompZ}

Applying the results of the previous section,  
we obtain the following result for 
Z-frames  ${\bf Z}(\cT)$ of Coxeter tournaments $\cT$. 

\begin{lemma}
Let $\Phi=B_n$, $C_n$ or $D_n$. 
Let ${\bf Z}(\cT)$ be the Z-frame of a neutral 
Coxeter tournament $\cT$ on a signed $\Phi$-graph $\cS\subset\cK_\Phi$. 
Consider a decomposition ${\bf Z}(\cT)=\bigcup_i{\bf T}_i$ 
into neutral trails given by Lemma \ref{L_general_decomp}. 
If $\Phi=C_n$ or $D_n$ then all trails ${\bf T}_i$ are closed. 
If $\Phi=B_n$ then possibly some ${\bf T}_i$ are open. 
\end{lemma}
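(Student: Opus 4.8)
The plan is to track the *final matches* of open trails. Recall that a trail in a Z-frame is open precisely when it terminates at two distinct matches, each of which is a leaf of the trail, and along a neutral trail every player has even degree. So an open trail must end at matches of degree $1$ in $\mathbf{Z}(\cT)$. The key observation is then simply that *only half edges produce matches of degree $1$*: by the definition of $\mathbf{Z}(\cT)$, a competitive game $e_{ij}^-$ gives a match $m_{ij}^-$ of degree $2$, a collaborative game $e_{ij}^+$ gives a match $m_{ij}^+$ of degree $2$, and a loop $e_i^\ell$ gives a match $m_i^\ell$ with two parallel edges, hence degree $2$. Only a half edge $e_i^h$ contributes a match $m_i^h$ of degree $1$.

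First I would make this precise: for any Coxeter tournament $\cT$ on $\cS$, a match $m\in M$ of $\mathbf{Z}(\cT)$ has $\deg(m)=1$ if and only if $m=m_i^h$ for some half edge $e_i^h\in\cS$; otherwise $\deg(m)=2$. This is immediate from the four bullet points in the definition of $\mathbf{Z}(\cT)$ (equivalently, from Table~\ref{F_Zskel}). Next I would invoke the structure of $\cK_\Phi$: in types $C_n$ and $D_n$ the complete signed graph $\cK_\Phi$ contains no half edges, so $\cS\subset\cK_\Phi$ contains no half edges, so $\mathbf{Z}(\cT)$ has no matches of degree $1$, so every match in $\mathbf{Z}(\cT)$ has degree exactly $2$. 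In type $B_n$, half edges are allowed, so degree-$1$ matches can occur, and open trails are possible.

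To finish the $C_n, D_n$ case: take the decomposition $\mathbf{Z}(\cT)=\bigcup_i \mathbf{T}_i$ from Lemma~\ref{L_general_decomp}. Each $\mathbf{T}_i$ is a neutral trail; suppose for contradiction some $\mathbf{T}_i$ is open. An open trail visits its two endpoints — which are matches, by bipartiteness and the fact that a neutral trail alternates player/match and each player along it has even (hence $\ge 2$) degree within the trail, so players cannot be endpoints — exactly once each, so these two endpoint matches have degree $1$ in $\mathbf{T}_i$, and since $\mathbf{T}_i\subset\mathbf{Z}(\cT)$ and all matches of $\mathbf{Z}(\cT)$ have degree $2$, each such endpoint match has its second incident edge lying in some other trail $\mathbf{T}_j$, $j\ne i$. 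But then $\mathbf{T}_j$ also has that match as an endpoint (it is the unique second edge, and a match has degree at most $2$), so $\mathbf{T}_i$ and $\mathbf{T}_j$ share a common final match — contradicting the conclusion of Lemma~\ref{L_general_decomp}. (Alternatively, and more cleanly: a single open trail already uses only one of the two edges at its endpoint match, so even ignoring other trails, that match has an unused incident edge, so the trail is not a maximal — hence not a minimal-count decomposition — piece; but the argument via the no-common-final-match property is the one Lemma~\ref{L_general_decomp} hands us directly.) Hence in types $C_n$ and $D_n$ all $\mathbf{T}_i$ are closed. In type $B_n$ no contradiction arises, and indeed a tournament playing a single won half edge whose Z-frame is one edge $i\to m_i^h$ is already an open neutral trail, so open trails genuinely occur.

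The only mildly delicate point — and the part I would write out carefully — is the dichotomy "match degree is $1$ iff it comes from a half edge," together with the remark that endpoints of a neutral trail are necessarily matches, not players. Everything else is bookkeeping against the definition of $\mathbf{Z}(\cT)$ and the list of edge types present in $\cK_\Phi$ for each of the three types. There is no real obstacle here; the lemma is essentially a structural unwinding of the preceding definitions, and the $B_n$ half of the statement is witnessed by the trivial example above.
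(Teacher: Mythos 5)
Your proposal is correct and follows essentially the same route as the paper: the paper's proof also rests on the single observation that in types $C_n$ and $D_n$ every match of ${\bf Z}(\cT)$ has degree $2$ (only half edges, present solely in type $B_n$, create degree-$1$ matches), so open neutral trails cannot occur. You simply spell out in more detail the step the paper leaves implicit, namely why degree-$2$ matches together with the no-common-final-match property of Lemma \ref{L_general_decomp} rule out open trails.
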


\begin{proof}
This result follows by noting that if $\cT$ is of type $C_n$ or $D_n$
then all matches in ${\bf Z}(\cT)$ are degree 2. 
Therefore, there are no open neutral trails in the decomposition, 
so each ${\bf T}_i$ is closed.
On the other hand, in type $B_n$ it is possible 
to have open and closed trails, since in this case
possibly some matches are degree 1. 
\end{proof}

Note that a tournament $\cT$ 
on a signed graph $\cS$ is neutral 
if and only if 
its Z-frame ${\bf Z}(\cT)$ is neutral. 
Naturally, we call such a $\cT$ \emph{irreducible} 
if ${\bf Z}(\cT)$ is irreducible, and \emph{reducible} otherwise.

\begin{lemma}\label{L_decomp4}
Suppose that
$\cS$ is a $D_n$-graph in which all players $v\in V$ have degree four. 
Then any neutral tournament $\cT$ on $\cS$ is reducible. 
\end{lemma}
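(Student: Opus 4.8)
The plan is to argue by contradiction. Suppose some neutral tournament $\cT$ on $\cS$ is irreducible. Then, since a tournament is neutral iff its Z-frame is, ${\bf Z}(\cT)$ is an irreducible neutral Z-frame, so by Lemma \ref{L_deg24} it is a single neutral trail ${\bf T}$. Because $\cS$ is a $D_n$-graph it has only negative and positive edges, so every match of ${\bf Z}(\cT)$ has degree $2$; hence ${\bf T}$ has no degree-one final match, i.e.\ ${\bf T}$ is closed. Moreover, since each player has degree $4$, each player occurs exactly twice along ${\bf T}$.

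Now I would fix any player $v$ and write the cyclic trail as ${\bf T} = (\dots, a, v, b, \dots, c, v, d, \dots)$, where $a,b$ (resp.\ $c,d$) are the two edges of ${\bf T}$ at the first (resp.\ second) occurrence of $v$. Cutting ${\bf T}$ at the two copies of $v$ yields edge-disjoint closed trails ${\bf T}' = (v,b,\dots,c,v)$ and ${\bf T}'' = (v,d,\dots,a,v)$ with ${\bf T} = {\bf T}'\cup{\bf T}''$. Each of ${\bf T}',{\bf T}''$ contains at least one match and omits the matches of the other, so (as ${\bf T}$ has $2n$ matches) both are non-empty and proper. It then suffices to check that ${\bf T}'$ and ${\bf T}''$ are neutral trails: ${\bf T}'$ would be a non-empty proper neutral sub-Z-frame of ${\bf Z}(\cT)$, contradicting irreducibility.

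The neutral-trail condition (consecutive edges at a player have opposite charge) is inherited from ${\bf T}$ at every player other than $v$, and a player other than $v$ occurring twice within ${\bf T}'$ is harmless since the condition is local to each occurrence. So the only thing to verify is that the two edges of ${\bf T}'$ meeting at $v$, namely $b$ and $c$, have opposite charge; the case of $a,d$ in ${\bf T}''$ then follows because neutrality of ${\bf T}$ at $v$ forces $\{\chi(a),\chi(b)\}=\{\chi(c),\chi(d)\}=\{+,-\}$. I would prove $\chi(b)\neq\chi(c)$ by a charge-parity count along ${\bf T}'$: the charge flips at every player-slot and at every match-slot arising from a competitive (negative) edge, and is preserved at match-slots arising from a collaborative (positive) edge. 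Since the charge of each edge is well defined, traversing the closed trail ${\bf T}'$ once returns to the starting charge, so the total number of flips around ${\bf T}'$ is even; subtracting the single flip at the player-slot at $v$ joining $c$ back to $b$ shows that the number of flips from $b$ to $c$ is odd, i.e.\ $\chi(c)=-\chi(b)$, as needed.

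I expect this final charge-parity bookkeeping to be the only real content; everything else is bookkeeping on top of Lemma \ref{L_deg24} and the observation that trails in $D_n$-Z-frames are closed. The mild care required is in formalizing "charge of an edge" (relative to its unique player endpoint) and checking the flip/no-flip dichotomy at matches directly from the four cases in the definition of ${\bf Z}(\cT)$, and in confirming that a closed neutral trail automatically has zero score at every player it visits.
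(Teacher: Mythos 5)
There is a genuine gap at the crucial step. Your reduction to a single closed neutral trail ${\bf T}$ in which every player occurs exactly twice is fine, and cutting ${\bf T}$ at the two occurrences of a player $v$ into arcs ${\bf T}'$ and ${\bf T}''$ is a natural move. But the claim that ${\bf T}'$ is neutral \emph{at $v$}, i.e.\ that $\chi(b)\neq\chi(c)$, is not established by your parity count — it is circular. Writing $N$ for the number of charge flips at the interior player-slots and match-slots of ${\bf T}'$, and $\epsilon\in\{0,1\}$ for whether the junction $c\to b$ at $v$ is a flip, the statement ``the total number of flips around a closed trail is even'' is the tautology $N+\epsilon\equiv 0\pmod 2$ with $\epsilon$ \emph{defined} to be whatever makes the cyclic sign sequence consistent; it cannot tell you that $\epsilon=1$. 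To conclude $\chi(c)=-\chi(b)$ you would need to know that $N$ is odd, i.e.\ that (number of interior players of the arc) $+$ (number of competitive matches on the arc) is odd, and nothing forces this. Note also that the flip at the junction is not covered by ``the charge flips at every player-slot'': that property comes from neutrality of ${\bf T}$ and applies only to pairs of edges that are \emph{consecutive in ${\bf T}$}, whereas $c,b$ are not consecutive in ${\bf T}$ (there $b$ follows $a$ and $c$ precedes $d$). Trail-neutrality at $v$ only gives $\{\chi(a),\chi(b)\}=\{\chi(c),\chi(d)\}=\{+,-\}$, which is compatible with $\chi(b)=\chi(c)$, in which case \emph{both} of your cut pieces fail to be neutral at $v$.

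This failure mode is precisely what the paper's proof is built to handle: it selects a minimal closed sub-trail ${\bf T}'$ based at $v$ that is neutral everywhere \emph{except possibly at $v$}, assumes the two edges at $v$ have the same charge, and then follows an extension ${\bf T}''$ out of $v$ until it first returns to a player $u$ of ${\bf T}'$; splitting ${\bf T}'={\bf T}'_+\cup{\bf T}'_-$ at $u$ according to the charges there, one of ${\bf T}'_\pm\cup{\bf T}''$ is a proper neutral sub-trail, giving the contradiction. So your overall strategy (find a proper neutral closed sub-trail inside the single trail) matches the paper's, but the one step you flag as ``the only real content'' is exactly where an additional idea is needed, and the argument you propose for it does not close.
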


\begin{proof}
The proof is by contradiction. 
Suppose that $\cT$ on $\cS$ is neutral 
and that ${\bf Z}(\cT)$ is irreducible. 
By Lemma \ref{L_general_decomp}, ${\bf Z}(\cT)$ is in fact a single
neutral trail, which for convenience we will denote by $\bf T$. 
Following the consecutive edges of $\bf T$, we can find 
a closed trail ${\bf T}'\subset {\bf T}$ which:  
\begin{itemize}[ ]
\item starts and ends at some player $v$,
\item visits no player $u\neq v$ more than once along the way, 
\item and is neutral everywhere except at $v$.
\end{itemize} 
Let us assume that 
the two directed edges in ${\bf T}'$ incident to $v$
are positive, since the other case is symmetric. 

Consider the extension 
${\bf T}'\cup {\bf T}''$
of ${\bf T}'$, as it departs $v$ via some negatively charged directed edge, 
until it eventually returns to some player $u\in{\bf T}'$ for the first time.
Note that $u\neq v$, as else, since ${\bf T}$ is irreducible, 
it would follow that ${\bf T}={\bf T}'\cup {\bf T}''$, 
and then that there are degree 2 vertices in ${\bf T}$ along ${\bf T}'$. 
Since ${\bf T}'$ is neutral at $u$, the edges in 
${\bf T}'$ incident to $u$ have opposing charges. 
Let ${\bf T}'={\bf T}'_+\cup {\bf T}'_-$, 
where ${\bf T}'_\pm\subset{\bf T}'$ is the trail between $u$ and $v$ 
which includes the positive/negative edge in 
${\bf T}'$ incident to $u$, as in Figure \ref{F_decomp4}.
To conclude, consider the charge of the directed 
edge in ${\bf T}''$ along which $u$ is revisited. 
To obtain the required contradiction, note that if this charge is 
positive (resp.\ negative) 
then ${\bf T}'_-\cup{\bf T}''$ (resp.\ ${\bf T}'_+\cup{\bf T}''$) is a neutral trail. 
\end{proof}

\begin{figure}[h!]
\includegraphics[scale=1.15]{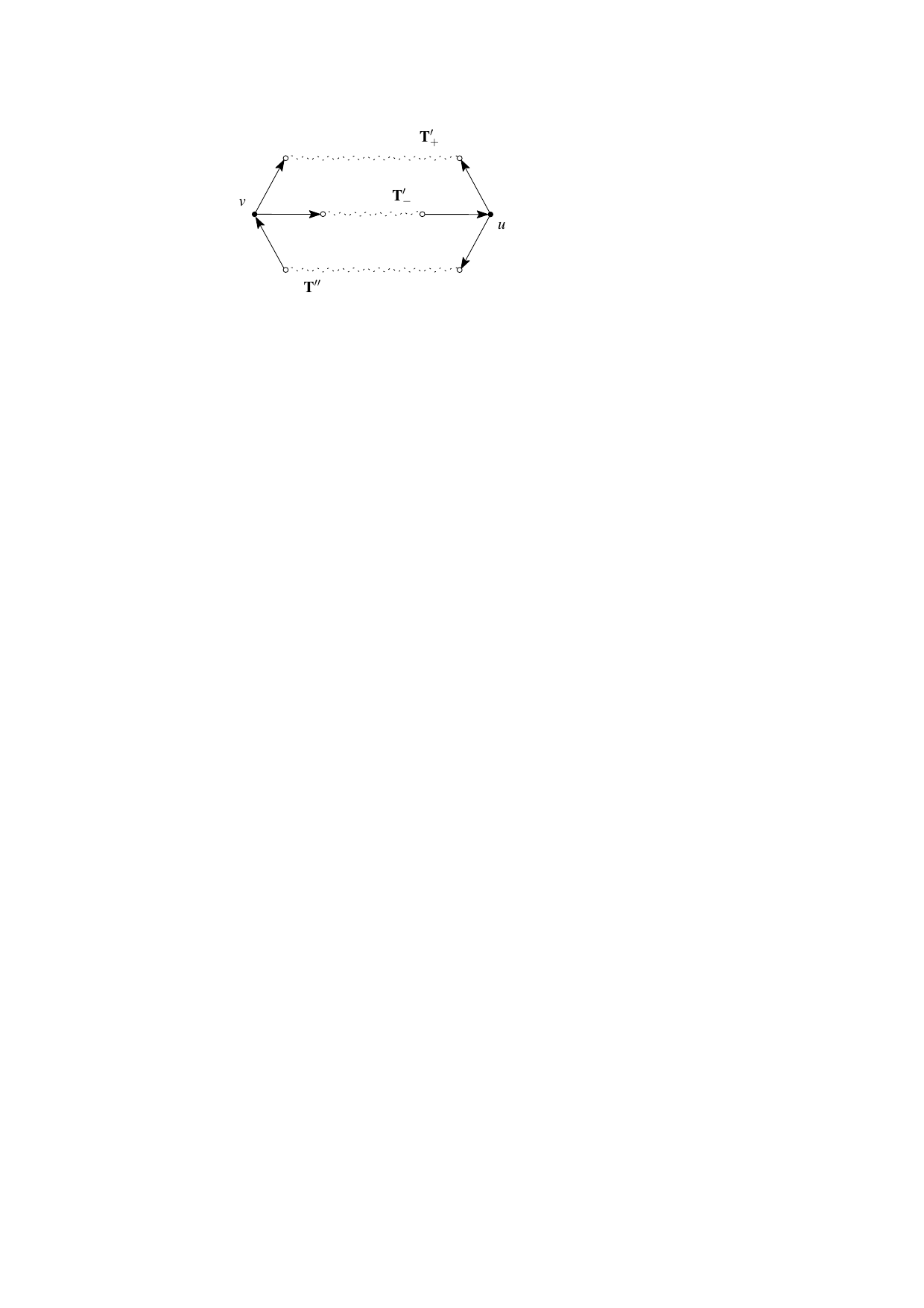}
\caption{
In this instance, the directed edge in 
${\bf T}''$ along which $u$ is revisited is positive, 
so  
${\bf T}'_-\cup{\bf T}''$ is a neutral trail. 
If it were
negative, then ${\bf T}'_+\cup{\bf T}''$ would be neutral.
Curved dotted lines represent the continuation of a trail. 
}
\label{F_decomp4}
\end{figure}

Finally, we turn our attention to the main result of this section. 

\begin{lemma}[Reversing lemma]
\label{L_rev}
Let $\Phi=B_n$, $C_n$ or $D_n$. 
Let $\cT$ be a tournament on the complete signed graph $\cK_\Phi$, 
and let $\cI\subset\cT$ be a neutral sub-tournament. If $\cI$ has 
$\ell\ge3$ games 
then $\cI$ can be reversed 
in a series of at most $\ell-2$ type $\Phi$ generator reversals. 
\end{lemma}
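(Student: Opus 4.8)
The plan is to induct on the number of games $\ell$ of $\cI$, distinguishing a reducible and an irreducible case. In the \emph{reducible} case there is a nonempty neutral $\cI'\subsetneq\cI$; then $\cI'':=\cI\setminus\cI'$ is neutral and nonempty too (scores add over disjoint games), and since distinct signed edges of $\cK_\Phi$ have non-proportional root vectors ${\bf e}$, no single game and no pair of games is neutral, so $\ell',\ell''\ge 3$ with $\ell'+\ell''=\ell$. Reversing $\cI'$ and then $\cI''$ — the latter untouched by the former's reversals, hence still neutral — costs, by induction, at most $(\ell'-2)+(\ell''-2)=\ell-4\le\ell-2$ generator reversals. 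We may thus assume $\cI$ is \emph{irreducible}; when $\ell=3$ it is then a copy of a generator $\Delta_c,\Delta_b,\Omega_i$ or $\Theta_j$ (a short check, cf.\ \cite{KMP23}), so one reversal suffices.

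Now let $\cI$ be irreducible with $\ell\ge 4$. By Lemma \ref{L_deg24}, ${\bf Z}(\cI)$ is a single neutral trail ${\bf T}$, all of whose players have degree $0$, $2$ or $4$. The plan is to \emph{peel off one generator}, in the spirit of the classical type-$A_{n-1}$ argument of \cite{KTV99,McS00} (which reverses a directed $\ell$-cycle by $\ell-2$ cyclic-triangle reversals). I would first locate a \emph{peeling site}: a small set $\cG_0$ of games with the shape of a generator, exactly one of whose games, $g$, lies outside $\cI$ (the others being consecutive games of $\cI$ along ${\bf T}$). If ${\bf T}$ has no loop, then not all players can have degree four — this is Lemma \ref{L_decomp4} in type $D_n$, and types $B_n,C_n$ reduce to it (a half edge forces a degree-two trail endpoint, and a loop-free type-$C_n$ trail is a $D_n$ trail) — so some player $w$ has degree two; $w$ plays no loop, and its two games $m,m'$, together with the appropriate edge $g$ of $\cK_\Phi$ on the remaining endpoints, form a triangle $\Delta_c$ or $\Delta_b$ (or, in type $B_n$ when one of $m,m'$ is a half edge, a neutral pair $\Omega_i$). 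If $m,m'$ share their other endpoint $a$, then $\{w,a\}$ carries both $e^\pm_{wa}$ and one passes instead to the two trail-neighbours at $a$ of $m=e^-_{wa}$, which necessarily reach distinct vertices. If ${\bf T}$ contains a loop $m=e^\ell_w$ (type $C_n$), then $w$ has degree four and, by irreducibility, its other two incidences reach distinct $j\ne k$; then $m,e^{\epsilon_1}_{wj},e^{\epsilon_2}_{wk}$ and a suitable $e^{\epsilon_3}_{jk}$ form a four-game clover, three of whose games lie in $\cI$.

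Given a peeling site, let $\cG$ denote $\cG_0$ carrying the orientations it inherits from the current tournament $\cT$, and $\cH:=(\cI\setminus\cG_0)\cup\{g\}$ carrying its orientations from $\cT$. Since the contributions of the games in $\cI\cap\cG_0$ at their shared player cancel, a short score computation shows that the two orientations of $g$ correspond exactly to ${\bf s}(\cG)={\bf 0}$ in one case and ${\bf s}(\cH)={\bf s}(\cI)={\bf 0}$ in the other, and one of these always holds; moreover $g\notin\cI$, as otherwise $\cG$ or $\cH$ would be a proper nonempty neutral sub-tournament of $\cI$. If ${\bf s}(\cG)={\bf 0}$, then $\cG$ is a generator; reversing it leaves us to reverse $\cI\triangle\cG=\cH$, a neutral sub-tournament of $\cT*\cG$ with at most $\ell-1$ games, which by induction costs at most $\ell-3$ further reversals — total at most $\ell-2$. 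If instead ${\bf s}(\cH)={\bf 0}$, reverse $\cH$ first (by induction, at most $\ell-3$ reversals), after which $\cG_0$, now carrying the updated orientations, is neutral and hence a generator; one more reversal completes the job — again at most $\ell-2$. (The remaining small cases, $\ell=4$, are checked directly; for instance, when ${\bf T}$ has a loop and $\ell=4$, $\cI$ is itself a clover.)

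The step I expect to be the main obstacle is the case analysis behind the existence of a peeling site: one must match every local configuration of the neutral trail ${\bf T}$ — taking into account each game's type (competitive, collaborative, half-edge or loop solitaire) and the ambient type $\Phi$ — against one of the finitely many generator figures, while controlling the interaction with degree-four players. The loop case in type $C_n$, where a four-game clover must be peeled off so as to share three of its games with $\cI$, is the most delicate; everything else is a variation on the classical triangle-peeling.
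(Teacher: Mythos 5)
Your overall strategy---reduce to an irreducible neutral trail and then \emph{peel off one generator at a time}, using an auxiliary game $g\in\cK_\Phi\setminus\cI$ whose two possible orientations make exactly one of the peeled piece $\cG$ or the remainder $\cH$ neutral---is a genuinely different route from the paper's. The paper instead splits the trail into two shorter neutral halves through a ``bridge'' game between two players of $\cI$ that play no game with each other in $\cI$ (such a pair exists for closed trails on $k\ge4$ vertices by Lemmas \ref{L_deg24} and \ref{L_decomp4}; $k=3$ is checked directly), or through a half edge at an interior player for open trails in type $B_n$, and recurses on both halves. Your triangle and neutral-pair peelings are sound: two trail-consecutive games at a player, neutral there and reaching distinct other vertices, contribute $(\pm{\bf e}_a\pm{\bf e}_b)/2$, which is half a root; your observations that $g\notin\cI$ (else irreducibility fails, since no one- or two-game sub-tournament is neutral) and that exactly one of $\cG,\cH$ is neutral are correct; and the degenerate case where a degree-two player sends both its games to the same neighbour is correctly deflected to a consecutive pair at that neighbour, where the two games cannot again be the $\pm$ pair to a single vertex because they would then carry equal charges there.

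The genuine gap is the type $C_n$ loop case, and it stems from a misreading of the generators: a neutral clover $\Theta_i$ has \emph{three} games (a loop at one player together with \emph{both} signed edges $e_{wj}^{-},e_{wj}^{+}$ to a single other player), not four. The four-game configuration you peel off---a loop at $w$ plus a triangle on $w,j,k$---is not a generator; when neutral it is precisely the ``hanger'' difference of Lemma \ref{L_PiclassAdj}, whose reversal costs \emph{two} generator reversals via an adjacent clover--triangle pair (Lemma \ref{L_NclassAdj}, Case 4), a fact your argument would have to establish. As written you charge one reversal for this piece, and your $\ell=4$ base case ``$\cI$ is itself a clover'' is false for the same reason. (The final count $2+(\ell-4)=\ell-2$ would still close, but only after proving the two-step hanger reversal.) The cleaner repair stays within your framework: since the loop's two incidences at $w$ are trail-paired with oppositely charged edges, the loop together with one incident pair game $e_{wj}^{\epsilon}\in\cI$ has score $\pm({\bf e}_w\mp{\bf e}_j)/2$, i.e.\ half the \emph{other} root between $w$ and $j$, so it completes to a genuine three-game clover using $g=e_{wj}^{-\epsilon}$, which lies outside $\cI$ because $\deg_{\cI}(w)=4$ is already exhausted by the loop, $e_{wj}^{\epsilon}$ and $e_{wk}^{\epsilon'}$ with $k\ne j$. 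A smaller but fixable soft spot is your existence claim for a degree-two player in type $B_n$ (``a half edge forces a degree-two trail endpoint''): a player carrying a half edge may have degree four, but the peeling works at any such player regardless, since the half edge and its trail-partner contribute $\pm{\bf e}_a/2$ and complete to a neutral pair via $e_a^h$.
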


The assumption that $\cT$ is on $\cK_\Phi$ is crucial, since 
not all of the games in the generators 
used to reverse $\cI$ will be in $\cI$ itself. 
However, after the series of reversals, 
only the games in $\cI$ will have been 
reoriented, i.e.,  
all games in $\cT \setminus \cI$ will be restored to 
their initial orientations.

\begin{proof}
Without loss of generality we may assume that $\cI$ is irreducible. 
In this case, by Lemma \ref{L_general_decomp}, 
the Z-frame $\bbZ(\cI)$ is a single neutral trail. 
For simplicity, we will speak of $\cI$ 
as a tournament and trail interchangeably. 

We first address the simplest case of open neutral trails, 
which appears only in type $B_n$. We proceed by induction 
on the length $\ell$. 
The smallest open trails, with $\ell=3$, are the neutral pairs
$\Omega_i$ (as in Figure \ref{F_genB}) themselves, 
which are clearly reversible in a single reversal. 
For longer open neutral trails $\cI$, 
consider any (half edge) solitaire
game $h$ in $\cT$ played by some $u$ in $\cI$, 
which is not an endpoint
of the trail. 
Using $h$, we can first reverse
either the part of the trail that is to the ``left'' of $u$
or else the part which is to the ``right.'' 
We can do this using the inductive hypothesis, 
as one of the parts is neutral and both have length smaller than $\ell$.
Then, using the reversal of this game $h^*$, 
we can reverse the 
other part of the trail in turn.
See Figure \ref{F_rev_trail_open} for an example. 

\begin{figure}[h!]\includegraphics[scale=1.15]{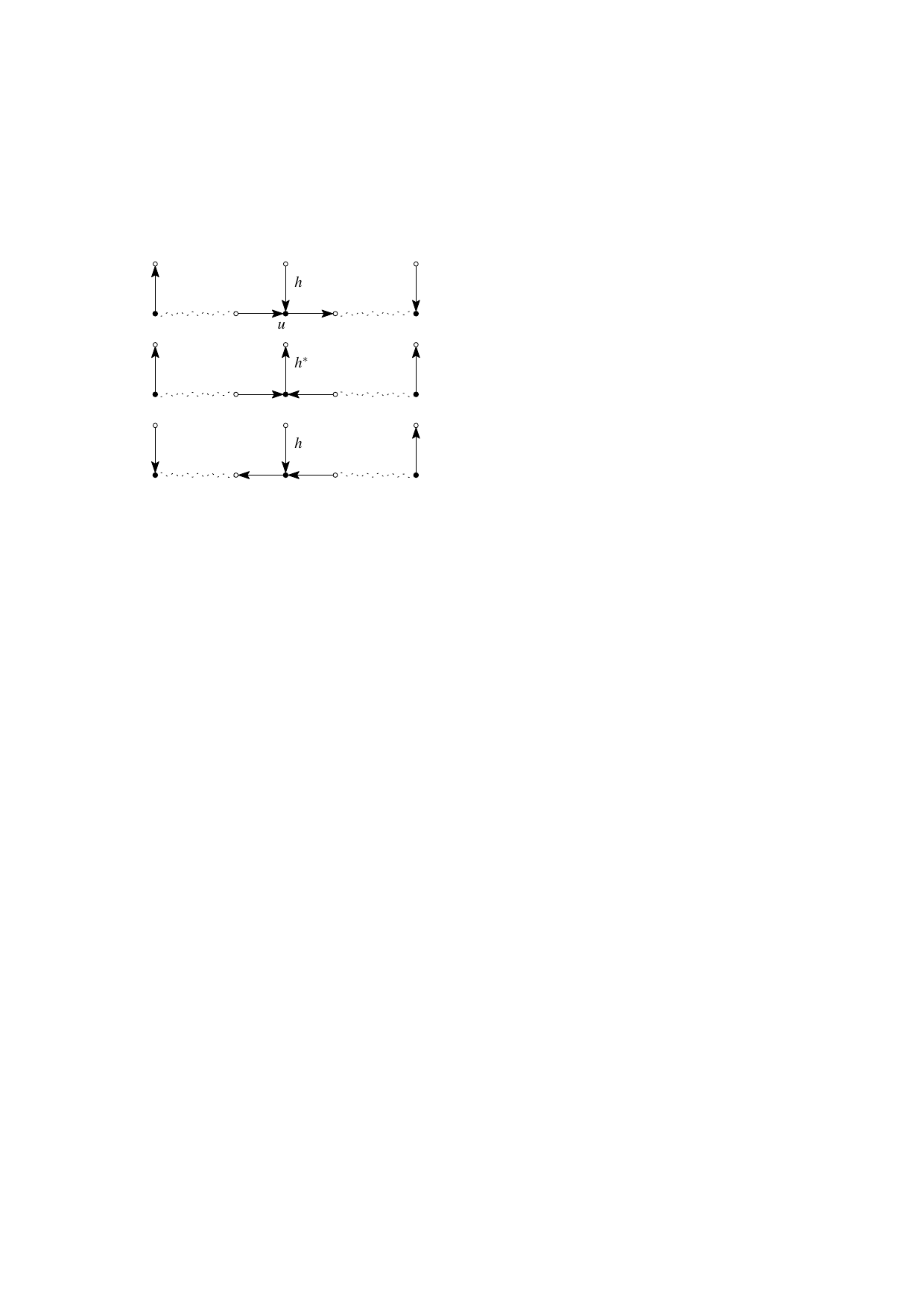}
\caption{In type $B_n$: 
Reversing an open neutral trail of length $\ell>3$, 
using a (half edge) solitaire game $h$ played by some $u$
in the ``middle'' of the trail. 
In this example, from top to bottom, 
we first reverse the ``right'' side of $\cI$ using $h$, and then
the ``left'' side using $h^*$. 
}
\label{F_rev_trail_open}
\end{figure}

Next, we turn to the case of closed neutral trails. 
Recall that closed trails do not have solitaire half edge games, 
so from this point on
we assume that 
$\Phi=C_n$ or $D_n$. If $\cI$ is a generator, then the result clearly holds. 
If it is not, then let $k \ge 3$ be the number of vertices in $\cI$. 
If $k=3$, one can verify directly that the result holds. 
See Figure \ref{F_v3}. 

\begin{figure}[h!]
\includegraphics[scale=1.15]{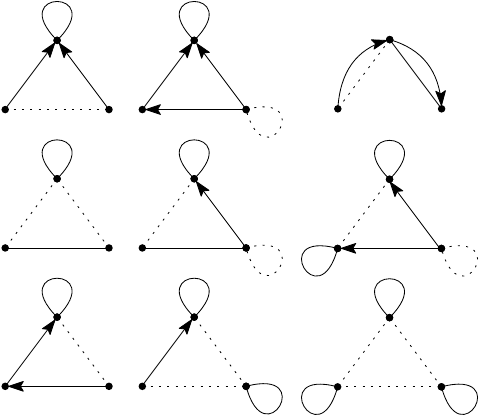}
\caption{
Up to symmetry, 
it suffices to consider the above neutral 
tournaments on three vertices. Note that each of these  
tournaments
with $\ell$ games can be reversed in 
$\ell-2$ steps. 
}
\label{F_v3}
\end{figure}

For $k \ge 4$, we aim to find a pair of vertices 
$i$, $j$ in $\cI$ which do not play a game with each other in  $\cI$.  
Once we find such a pair, the argument is similar to the case
of open trails above; 
the difference being that, in the case of closed trails, we will either first
reverse the ``top/bottom'' (instead of the ``left/right'') of the trail, 
and then the other side
in turn, as in Figure \ref{F_rev_trail_closed}. 

\begin{figure}[h!]\includegraphics[scale=1.15]{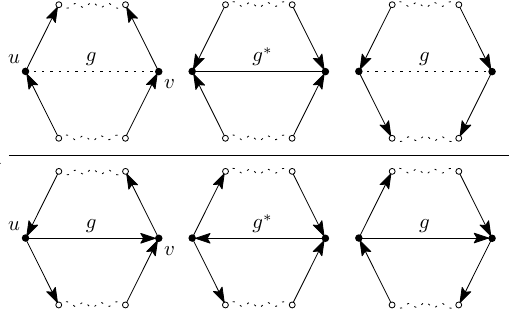}
\caption{
Reversing $\cI$ when its Z-frame is a closed neutral trail. 
{\it Above:}  If the charges of the edges incident to $u$ and $v$
are ``aligned,'' we use the 
collaborative game $g$ between $u,v$
to reverse $\cI$. 
{\it Below:} 
Otherwise, if they are ``unaligned,''
we use the competitive 
game between $u$ and $v$.
}
\label{F_rev_trail_closed}
\end{figure}

To this end, suppose that every pair of 
vertices in $\cI$ plays at least one game
with each other. As $k\ge 4$, this implies that the degree of 
every player in $\cI$ is at least $3$. 
Moreover, if some $v$ in $\cI$ plays a loop game, then 
$\deg(v) \ge 5$. 
However, as $\cI$ is neutral and irreducible, 
this would contradict Lemma \ref{L_deg24}. 
Thus, $\cI$ is a $4$-regular $D_n$-graph. 
But this is also impossible, by Lemma \ref{L_decomp4}.
Therefore, for all $k\ge4$, 
there exists a pair $i,j$ of players 
which do not play a game with each other,
and this concludes the proof. 
\end{proof}

Finally, using the reversing lemma, we will prove the 
main result of this section.

\begin{proof}[Proof of Theorem \ref{T_MainInt}]
Let $\cT,\cT'\in\tour(\Phi,{\bf s})$. 
The distance between $\cT$ and $\cT'$ 
is the smallest number of the generator reversals 
which transforms $\cT$ into $\cT'$. 
The games in the difference 
$\cD=\cT\setminus\cT'$ are precisely those which 
need to be reversed. 
Since ${\bf s}(\cT)={\bf s}(\cT')$, it follows that $\cD$ is neutral.
Therefore, by Lemma \ref{L_rev}, 
there is a path from $v(\cT)$ to $v(\cT')$ in $\ig(\Phi,{\bf s})$
of length $O(n^2)$. Hence 
$\ig(\Phi,{\bf s})$ is connected and its diameter
$D= O(n^2)$. 
\end{proof}

\section{Interchange networks}
\label{S_Networks}

In this section, for ease of exposition, 
we will speak of Coxeter tournaments $\cT$
and their corresponding vertices $v(\cT)$ in $\ig$ interchangeably. 

\begin{definition}
For $\cT_1,\cT_2\in\tour(\Phi,{\bf s})$
at distance two in $\ig(\Phi,{\bf s})$, we define the   
{\it interchange network}
$N(\cT_1,\cT_2)$ to be the 
union over all paths of length two between $\cT_1,\cT_2$. 
\end{definition}

Note that each path of length two 
between such $\cT_1,\cT_2$
corresponds to a way of 
reversing the {\it difference} $\cD=\cT_1\setminus\cT_2$
between $\cT_1$ and $\cT_2$.

\subsection{Classifying networks}
\label{S_class_net}

In this section, we will
classify the possibilities for $(N,\cD)$. 
As we will see, this is the key to applying path coupling
(Theorem \ref{T_PC} above)
in Section \ref{S_Coupling} below. 
In the classical case of type $A_{n-1}$ there is only one possibility
for $N$ (a ``single diamond''), and this is the reason why
such a simple contractive coupling 
(as in Figure \ref{F_coup_BD} below) 
is possible.  As it turns out, this continues
to hold in types $B_n$ and $D_n$, 
but the underlying reasons are more complicated. 
Type $C_n$, on the other hand, 
is significantly more complex, as 
then the structure of $N$ can take various other forms. 

It can be seen that any two distinct generators $\cG_1\neq \cG_2$
are either {\it disjoint} $\cG_1\cap \cG_2=\emptyset$ or else
have exactly one game $g$ in common $\cG_1\cap \cG_2=\{g\}$. 
In this case, we say that $\cG_1,\cG_2$ are {\it adjacent}. 

Note that if a path of length two from $\cT_1$ to $\cT_2$
passes through some $\cT_{12}$, then there are two generators 
$\cG_1,\cG_2\subset\cT_{12}$ such that $\cT_i=\cT_{12}*\cG_i$,
for $i\in\{1,2\}$.  
In this way, every such path of length two from 
$\cT_1$ to $\cT_2$ is determined by a {\it midpoint} $\cT_{12}$
and a pair of generators $\cG_1,\cG_2\subset\cT_{12}$. 

There are 
three possible networks when $\cG_1,\cG_2$ are disjoint. 
We call these the {\it single}, {\it double}
and {\it quadruple diamonds}. 
See Figure \ref{F_diamonds1}.
Recall that double edges in $\ig$ correspond to 
neutral clover reversals. All other types of reversals
(neutral triangles and pairs) are represented as single edges.

\begin{figure}[h!]
\includegraphics[scale=1.25]{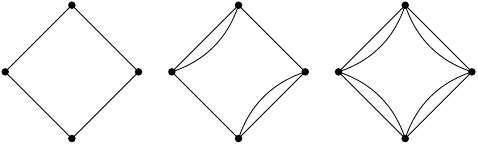}
\caption{
{\it Left to right:} 
The single, double and quadruple diamond
interchange networks.}
\label{F_diamonds1}
\end{figure}

The following result classifies the types of 
networks $N(\cT_1,\cT_2)$ when 
$\cG_1,\cG_2$ are disjoint. 

\begin{lemma}
\label{L_NclassDis}
Suppose that there is a path of length two between $\cT_1,\cT_2$  
in $\ig(\Phi,{\bf s})$ that passes through midpoint $\cT_{12}$,
with associated generators $\cG_i\subset \cT_{12}$ 
such that $\cT_i=\cT_{12}*\cG_i$. 
Suppose that $\cG_1,\cG_2$ are disjoint. 
Then if exactly zero, one or two of the $\cG_i$
are clovers then the 
network $N(\cT_1,\cT_2)$ is a 
single, double or quadruple diamond, 
respectively. 
\end{lemma}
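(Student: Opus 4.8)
The plan is to show that $N(\cT_1,\cT_2)$ is precisely the ``commuting diamond'' built from $\cG_1$ and $\cG_2$, with a double edge inserted wherever one of the two generators is a clover. First I would locate this diamond. Since $\cG_1\cap\cG_2=\emptyset$, reversing one of the generators does not touch the games of the other, so $\cG_2$ is still a copy of a generator inside $\cT_1=\cT_{12}*\cG_1$ and $\cG_1$ (reversed) inside $\cT_2=\cT_{12}*\cG_2$; hence
\[
\cT':=\cT_1*\cG_2=\cT_2*\cG_1=\cT_{12}*\cG_1*\cG_2
\]
is a well-defined vertex of $\ig(\Phi,{\bf s})$. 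As every generator has three games (Figures~\ref{F_genD}--\ref{F_genC}), the generators $\cG_1,\cG_2$ are non-empty, so $\cT_1,\cT_2,\cT_{12},\cT'$ are pairwise distinct, and we obtain a $4$-cycle $\cT_1-\cT_{12}-\cT_2-\cT'-\cT_1$ inside $N(\cT_1,\cT_2)$: its opposite edge pair $\{\cT_1,\cT_{12}\},\{\cT',\cT_2\}$ comes from reversing $\cG_1$ (or its reversal, a generator of the same type), and the opposite pair $\{\cT_{12},\cT_2\},\{\cT_1,\cT'\}$ from reversing $\cG_2$.

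Next I would read off the multiplicities. An edge of $\ig(\Phi,{\bf s})$ from $\cT_{12}$ to $\cT_1$ comes from a generator $\cG\subseteq\cT_{12}$ with $\cT_1=\cT_{12}*\cG$; any two such generators reverse the same games, hence coincide as oriented sub-tournaments, so the edge is realised by the unique generator $\cG_1$, and (as only clovers contain a loop) it is a double edge exactly when $\cG_1$ is a clover. The opposite ``$\cG_1$-edge'' $\{\cT',\cT_2\}$ behaves identically, since a generator and its reversal are clovers at the same time. Therefore the $4$-cycle has $0$, $1$, or $2$ of its opposite edge-pairs doubled according as $0$, $1$, or $2$ of $\cG_1,\cG_2$ are clovers; by definition these three configurations are the single, double, and quadruple diamonds of Figure~\ref{F_diamonds1}.

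The remaining step, which I expect to be the main obstacle, is to show $N(\cT_1,\cT_2)$ contains nothing else. Since every edge of $N$ is incident to $\cT_1$ or $\cT_2$, it is enough to prove that $\cT_{12}$ and $\cT'$ are the only midpoints of length-two paths from $\cT_1$ to $\cT_2$. Such a path through a midpoint $\cH$ is determined by generators $\cK_1,\cK_2\subseteq\cH$ with $\cT_i=\cH*\cK_i$, and it reorients exactly $\cK_1\triangle\cK_2$, which must equal the difference $\cD=\cT_1\setminus\cT_2$; applying this to the given path shows $E(\cD)=E(\cG_1)\sqcup E(\cG_2)$, so $|E(\cD)|=6$, whence $|E(\cK_1)|=|E(\cK_2)|=3$ together with $|E(\cK_1)\triangle E(\cK_2)|=6$ forces $E(\cK_1)$ and $E(\cK_2)$ to partition $E(\cD)$. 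Now I would work inside $\cT_1$: there the games of $\cD$ form a disjoint union $\cA_1\sqcup\cA_2$ of two edge-disjoint neutral generators (the $\cT_1$-restrictions to $E(\cG_1)$ and $E(\cG_2)$, which are generators of the same types as $\cG_1,\cG_2$), and the $\cT_1$-restriction of each $\cK_i$ is again a neutral generator whose games lie inside $\cA_1\sqcup\cA_2$. The crux is the structural fact that \emph{such a generator $\cK$ must equal $\cA_1$ or $\cA_2$}: writing $\cK=(\cK\cap\cA_1)\sqcup(\cK\cap\cA_2)$, neutrality of $\cK$ gives ${\bf s}(\cK\cap\cA_1)=-{\bf s}(\cK\cap\cA_2)$, and if both parts were non-empty then, as $|E(\cK)|=3$, one of them, say $\cK\cap\cA_1$, would be a single game $g$, and neutrality of $\cA_2$ would force its one remaining game $g'$ to have ${\bf s}(g')={\bf s}(g)$; but the score of a game is a nonzero vector $(w_e-1/2){\bf e}$ with ${\bf e}$ determined by its edge, so ${\bf s}(g')={\bf s}(g)$ gives $g'=g$, contradicting $\cA_1\cap\cA_2=\emptyset$. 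Hence one of $\cK\cap\cA_1,\cK\cap\cA_2$ is empty and $\cK\in\{\cA_1,\cA_2\}$ by the game count, so $E(\cK_1)$ and $E(\cK_2)$ induce the same partition of $E(\cD)$ as $E(\cG_1)$ and $E(\cG_2)$, and $\cH=\cT_1*\cK_1\in\{\cT_{12},\cT'\}$. Combining this with the multiplicity count identifies $N(\cT_1,\cT_2)$ with the single, double, or quadruple diamond. The one place needing real care is this structural fact: the score-injectivity argument has to be run with the orientations kept consistent, which is why I would carry everything inside the single tournament $\cT_1$, and it is the reversal bookkeeping (a clover stays a clover under reversal) that makes the double edges appear in the opposite pairs of the diamond.
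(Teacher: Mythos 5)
Your argument is correct and reaches the lemma by the same route as the paper, whose proof simply asserts that the only length-two paths between $\cT_1$ and $\cT_2$ are the two orders of reversing the disjoint generators $\cG_1^*,\cG_2\subset\cT_1$, with multiplicities read off from which of them are clovers. The paper labels the uniqueness of the two midpoints ``clear''; your score-injectivity argument, showing that any midpoint's generator pair must partition $E(\cD)$ into exactly $E(\cG_1)$ and $E(\cG_2)$, is a legitimate filling-in of that step rather than a different approach.
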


\begin{proof}
Clearly, there are exactly two paths from $\cT_1$ to $\cT_2$. 
These paths correspond to reversing the disjoint generators
$\cG_1^*,\cG_2\subset\cT_1$ in series, in one of the two 
possible orders. See Figure \ref{F_dis}.
\end{proof}

\begin{figure}[h!]
\includegraphics[scale=1.15]{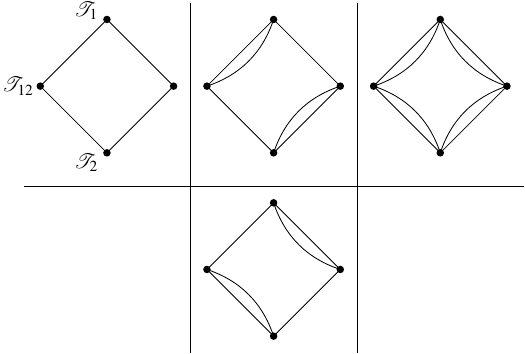}
\caption{
{\it Left to right:} $N(\cT_1,\cT_2)$
when exactly zero, one or two of the 
disjoint $\cG_1,\cG_2$ is a clover (single, double and quadruple diamonds). 
In each cell, the leftmost vertices are $\cT_1$, $\cT_{12}$ and $\cT_2$, 
as in the first cell. 
}
\label{F_dis}
\end{figure}

The case that $\cG_1,\cG_2$ are adjacent is more involved. 
It is useful to note that, in this case, 
the difference $\cD=\cT_1\setminus\cT_2$ is a neutral tournament 
with exactly four games on either three or four vertices. 
Even so, there are a number of cases to consider, 
and the key to a concise argument is grouping symmetric cases together. 
For a Coxeter tournament $\cT$, we define its {\it projection graph}
$\pi(\cT)$ to be the graph obtained by changing each: 
\begin{itemize}[ ]
\item oriented negative/positive edge 
(i.e., competitive/collaborative game) 
into an undirected edge, 
\item oriented half edge 
(i.e., half edge solitaire game) 
into an undirected half edge, 
\item oriented loop 
(i.e., loop solitaire game) 
into an undirected loop.
\end{itemize}

There are a number of ways that two 
generators $\cG_1,\cG_2$ can be adjacent.
However, 
there are only four possibilities for 
their projected difference
$\pi(\cD)$. 
We call these the {\it square}, {\it tent}, {\it fork}
and {\it hanger}, as in Figure \ref{F_Ds}. 
The following observation is essentially self-evident, 
and can be 
verified by an elementary case analysis. 
We omit the proof. 

\begin{figure}[h!]
\includegraphics[scale=1.25]{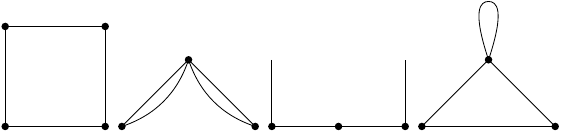}
\caption{
{\it Left to right:} 
The square, tent, fork, and hanger. 
}
\label{F_Ds}
\end{figure}

\begin{lemma}
\label{L_PiclassAdj}
Assume the same set up as Lemma \ref{L_NclassDis}, except that 
instead $\cG_1,\cG_2$ are adjacent. 
Let $\cD=\cT_1\setminus\cT_2$. Then: 
\begin{enumerate}
\item If $\cG_1,\cG_2$ are neutral triangles on four/three vertices
then $\pi(\cD)$ is a square/tent. 
\item If $\cG_1,\cG_2$ are neutral clovers 
then $\pi(\cD)$ is a tent. 
\item If $\cG_1$ is a neutral pair and $\cG_2$ is a neutral 
pair or triangle then $\pi(\cD)$ is a fork. 
\item If $\cG_1$ is a neutral clover and $\cG_2$ is a neutral 
triangle then $\pi(\cD)$ is a hanger. 
\end{enumerate}
\end{lemma}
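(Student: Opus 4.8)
The plan is to carry out the promised case analysis, organized first by the unordered pair of types of $\cG_1$ and $\cG_2$, and then by which single game $g$ they share. The structural input I would use is that every generator has exactly three games, of one of three shapes: a neutral triangle ($\Delta_c$ or $\Delta_b$) is three competitive/collaborative edges on three distinct vertices; a neutral pair $\Omega_i$ (which occurs only in type $B_n$) is one edge together with the two half edges at its endpoints, on two vertices; and a neutral clover $\Theta_i$ (which occurs only in type $C_n$) is a loop $e_i^\ell$ together with the negative and positive edges joining $i$ to one other vertex $j$, again on two vertices. The first fact is a definition; the other two I would read off Figures \ref{F_genB} and \ref{F_genC}, observing in passing (by inspecting score contributions) that these are the only neutral configurations on three games. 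Since $\cG_1,\cG_2$ are adjacent they meet in exactly one game $g$, and since they are distinct sub-tournaments of the common midpoint $\cT_{12}$ -- so that the orientation of every game is already pinned down -- their game-sets cannot coincide; hence $\cD$ is $\cG_1\cup\cG_2$ with $g$ removed, a neutral tournament on $3+3-2=4$ games, exactly as recorded before the lemma. The remaining task is to compute $\pi(\cD)$ in each case and match it to the square, tent, fork or hanger of Figure \ref{F_Ds}; passing to the projection is precisely what collapses the many orientation-and-sign variants to this short list.

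Next I would dispose of the combinations that do not arise or that degenerate. A neutral pair and a neutral clover never coexist (they live in different types), so only the pairs (triangle, triangle), (pair, pair), (pair, triangle), (clover, clover) and (clover, triangle) need attention. For (clover, clover) I would note that two clovers cannot share an edge: every edge of a clover lies on the vertex pair carrying its loop, so a shared edge would put both clovers on the same two vertices and force them to share \emph{both} of the edges between those vertices, contradicting adjacency. Hence adjacent clovers share their loop, so their loop-vertices agree, and $\cD$ joins that vertex to two distinct others by double edges -- a tent. The same style of argument shows adjacent pairs cannot share their edge (that would force identical game-sets), so they share a half edge, and $\cD$ is a two-edge path carrying a pendant half edge at each of its two endpoints -- a fork.

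For the three cases involving a triangle I would use that triangles carry neither loops nor half edges, so the shared game $g$ must be an edge of the triangle. In (triangle, triangle), the non-triangle constraint is vacuous and $\cG_2$ lives on the two endpoints of $g$ together with a third vertex; I would split on whether that third vertex coincides with the third vertex of $\cG_1$. If it does, $\cD$ consists of a negative-and-positive edge pair on each of the two vertex pairs through the common apex, so $\pi(\cD)$ is a tent; if not, $\cD$ is four distinct edges on four vertices arranged in a $4$-cycle, so $\pi(\cD)$ is a square. In (clover, triangle), $g$ is one of the two edges joining the clover's loop-vertex $i$ to its partner $j$; removing it leaves the loop at $i$, the other $ij$-edge, and the two triangle edges at its third vertex $k$, so $\pi(\cD)$ is the triangle on $\{i,j,k\}$ with a loop at $i$ -- a hanger. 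In (pair, triangle), $g$ is the pair's edge, which is also an edge of the triangle, and $\cD$ is the pair's two half edges together with the two triangle edges at the triangle's third vertex -- once more a two-edge path with a pendant half edge at each end, i.e.\ a fork. Reading off Figure \ref{F_Ds} in each case gives the four stated conclusions.

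The only part that takes genuine care -- and the reason the proof is worth spelling out at all -- is completeness together with the orientation bookkeeping: one must be confident that the list of (type pair, shared game) combinations above is exhaustive, and that the orientations inherited from $\cT_{12}$ really do exclude the near-misses (two clovers or two pairs sharing two games, a triangle sharing two of its edges with another, a clover whose loop is anchored at the ``wrong'' endpoint of a shared edge, and so on). Given how few shapes a three-game neutral generator can take, I expect this to be entirely mechanical, which is why it is safe to omit.
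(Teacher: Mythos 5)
The paper offers no proof of this lemma, stating only that it is ``essentially self-evident, and can be verified by an elementary case analysis''; your writeup is a correct and complete execution of precisely that case analysis, organized in the natural way (by the unordered pair of generator types and by the identity of the shared game), with the key structural inputs (each generator has three games; adjacency means exactly one common game, so $\cD$ has four games; clovers cannot share an edge and pairs cannot share their full edge) all correctly justified. No gaps.
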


In addition to the single and double diamond networks 
in Figure \ref{F_diamonds1}, there are two additional networks
that can occur when $\cG_1,\cG_2$ are adjacent.
We call these the {\it split} and {\it heavy diamonds},
see Figure \ref{F_diamonds2}.

\begin{figure}[h!]
\includegraphics[scale=1.25]{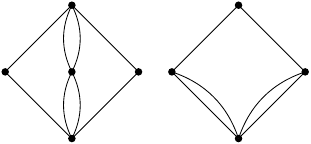}
\caption{
{\it Left to right:} The split and heavy diamond 
interchange networks.}
\label{F_diamonds2}
\end{figure}

The following result classifies the types of 
networks $N(\cT_1,\cT_2)$ when 
$\cG_1,\cG_2$ are adjacent. 

\begin{lemma}
\label{L_NclassAdj}
Assume the same set up as Lemma \ref{L_PiclassAdj} 
(with $\cG_1,\cG_2$ are adjacent). 
Let $\cD=\cT_1\setminus\cT_2$ 
and 
$N=N(\cT_1,\cT_2)$. 
Then: 
\begin{enumerate}
\item If $\Phi=B_n$ or $D_n$, then 
$N$ is a 
single diamond. 
\item If $\Phi=C_n$ and $\pi(\cD)$ is a square, then 
$N$ is a single diamond. 
\item If $\Phi=C_n$ and $\pi(\cD)$ is a tent, then 
$N$ is a split diamond. 
\item If $\Phi=C_n$ and $\pi(\cD)$ is a hanger, then 
$N$ is a double or heavy diamond. 
\end{enumerate}
\end{lemma}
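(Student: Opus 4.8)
The plan is to analyze, case by case on the type of the projected difference $\pi(\cD)$ (square, tent, fork, hanger), which pairs of adjacent generators $\cG_1,\cG_2$ can realize it, and then to directly enumerate \emph{all} length-two paths between $\cT_1$ and $\cT_2$. Recall from Lemma~\ref{L_PiclassAdj} that $\pi(\cD)$ is determined by the nature of $\cG_1,\cG_2$: squares and tents come from neutral triangles, tents also from two clovers, forks from a neutral pair paired with a pair or triangle, and hangers from a clover paired with a triangle. Since every length-two path from $\cT_1$ to $\cT_2$ is determined by a midpoint $\cT_{12}$ together with a pair of generators $\cG_1',\cG_2'\subset\cT_{12}$ with $\cT_i = \cT_{12}*\cG_i'$, and since such generators must be either disjoint or adjacent with a single common game, the midpoint search amounts to: find all ways to write the neutral tournament $\cD$ (four games, three or four vertices) as a symmetric difference realized by two generators sitting inside a common Coxeter tournament. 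This is a finite check on a very small combinatorial object, so the work is bookkeeping rather than genuine difficulty.

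First I would dispose of part (1). In types $B_n$ and $D_n$ there are no clovers, so no double edges appear; the available generators are $\Delta_c$, $\Delta_b$ and (in $B_n$) the neutral pairs $\Omega_i$. For each realization of a square, tent, fork, or hanger as $\pi(\cD)$ by triangle/pair generators, I would check that the only midpoints are $\cT_1$ itself reached via reversing $\cG_1^*$ then $\cG_2$, giving exactly the two paths of a single diamond — i.e., there is no ``third route.'' The key structural input here is that $\cD$ has only four games, so a generator inside the midpoint can use at most its three or four games, which tightly constrains how $\cD$ can be split; combined with Lemma~\ref{L_PiclassAdj} this forces the single-diamond conclusion. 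Parts (2)--(4) then treat the genuinely new phenomena in type $C_n$, where clovers (hence double edges) enter. For (2), a square forces both $\cG_i$ to be neutral triangles (by Lemma~\ref{L_PiclassAdj}(1)), so no clover is involved and the single-diamond analysis from part (1) applies verbatim. For (3), a tent arises either from two triangles on three vertices or from two clovers; in the latter case each $\cT_i = \cT_{12}*\Theta_{k_i}$ produces a double edge, and I would show there is a \emph{second} midpoint, reached by a different factorization of $\cD$, yielding the split diamond — the point being that a tent-shaped $\cD$ built from clovers admits exactly two distinct clover-pair decompositions. For (4), a hanger comes from a clover $\cG_1$ adjacent to a triangle $\cG_2$; here I expect to find that depending on which games of $\cD$ the shared game $g$ occupies (and on the orientations), either there is one extra midpoint (heavy diamond) or the network closes up as a double diamond.

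The main obstacle will be case (4), the hanger: the clover $\Theta_k$ has a loop, and loop-solitaire games contribute $\pm1$ (rather than $\pm1/2$) to the score, so when one reverses a clover and then a triangle sharing a game, the ``accounting'' of which orientations are consistent with a valid Coxeter tournament in $\tour(\Phi,\mathbf s)$ is more delicate than in the loopless cases. I would handle this by passing to the Z-frame picture: represent $\cD$ as a neutral trail (Lemma~\ref{L_general_decomp}) with one loop-match of degree two and three ordinary matches, and classify the ways this trail decomposes into a clover-trail plus a triangle-trail overlapping in one match. The finitely many resulting configurations split cleanly into the ``heavy diamond'' and ``double diamond'' patterns, and in each configuration one reads off the full set of midpoints directly from the trail structure. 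Once the Z-frame enumeration is set up, the remaining verifications — that each listed midpoint genuinely lies in $\tour(\Phi,\mathbf s)$ and that no further midpoints exist — are routine and I would relegate them to a figure-assisted case check, as the paper does for Lemma~\ref{L_PiclassAdj}.
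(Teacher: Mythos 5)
Your overall strategy---reduce to a finite case check on the four shapes of $\pi(\cD)$ and enumerate all length-two paths---is exactly the paper's approach, and your framing of the midpoint search (two generators sitting inside a common tournament, not inside $\cD$) is the right one. Parts (1) and (2) of your outline would go through essentially as in the paper, though note that hangers cannot occur in types $B_n$ or $D_n$ (they require a loop), and that the second path of the single diamond does \emph{not} come from reversing $\cG_1,\cG_2$ in the other order (that is impossible when they share a game); it comes from choosing a different ``middle'' game outside $\cD$, e.g.\ the opposite-signed edge between the same antipodal pair, or the half edge at the middle vertex in the fork case. This mechanism---every extra path is indexed by a candidate middle game not in $\cD$, reversed twice---is the organizing principle of the whole proof, and your write-up never quite isolates it.

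The concrete gap is in part (3). A split diamond has three midpoints: one joined to $\cT_1,\cT_2$ by double edges and two joined by single edges. For the clover--clover tent with apex $i$ and base $j,k$, one checks (by a one-line score computation at $i$) that there is exactly \emph{one} clover-pair midpoint, not two as you claim: in $\cT_1$ exactly one of the two clovers $\{e_i^\ell,e_{ij}^+,e_{ij}^-\}$, $\{e_i^\ell,e_{ik}^+,e_{ik}^-\}$ is neutral, and reversing it makes the other one available. So ``exactly two distinct clover-pair decompositions'' is false, and even if it were true it would produce a quadruple-diamond shape, not a split diamond. The two missing single-edge paths come from neutral triangles using the competitive and collaborative games between the \emph{base} vertices $j,k$ (games not in $\cD$). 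Symmetrically, for the triangle--triangle tent in type $C_n$---which your sketch does not address at all beyond the type-$B_n/D_n$ analysis---the extra double-edge path comes from the loop at the apex, which does not exist in types $B_n,D_n$; this is precisely why the same $\pi(\cD)$ gives a single diamond in one type and a split diamond in the other. Finally, in part (4) the heavy and double diamonds both have two midpoints; they are distinguished by whether the two double edges are incident to the same endpoint or to opposite endpoints, not by the number of midpoints, so ``one extra midpoint (heavy diamond) or \ldots double diamond'' does not identify the correct dichotomy. Your Z-frame idea for organizing the hanger case is a reasonable alternative bookkeeping device, but the case split it must produce is the single-vs.-double edge placement just described.
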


\begin{proof}
{\bf Case 1a.}
We start with the simplest case that $\Phi=B_n$
or $D_n$ and 
$\cG_1,\cG_2$ are adjacent neutral triangles on four vertices,
so that $\pi(\cD)$ is a square. 
Note that, to reverse $\cD$ in two steps, we must reverse
exactly two edges in $\cD$ in each step. As such, no neutral 
pairs will be involved in reversing $\cD$ in two steps. 
There are exactly two ways 
to reverse $\cD$. For each pair of ``antipodal'' vertices
in $\cD$, consider the two games played between the pair. 
Exactly one of the two
games $g$ allows us to reverse the games in $\cD$ 
on one ``side'' of $g$. 
Then, in turn, we can use $g^*$ to reverse the
other two games in $\cD$. 
See Figure \ref{F_rev_square} for all the possible cases of $\cD$. 

\begin{figure}[h!]
\includegraphics[scale=1.15]{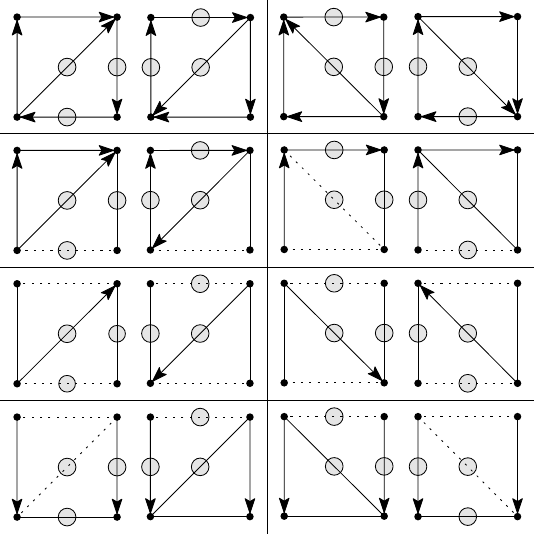}
\caption{
Reversing when $\pi(\cD)$ is a square
in types $B_n$, $C_n$ or $D_n$. 
Each row corresponds to one of the four possible configurations 
of $\cD$. 
In each cell, the shaded circles indicate 
how to reverse $\cD$
in two steps, using one of the games between an ``antipodal'' pair of players
along $\cD$. Note that each row has exactly two cells, as there is always
exactly two ways to reverse $\cD$ in two steps. 
The ``middle'' game (not in the square itself)
is reversed twice, so returned to its original orientation. 
}
\label{F_rev_square}
\end{figure}

{\bf Case 1b.}
Suppose that 
$\Phi=B_n$
or $D_n$ and 
$\cG_1,\cG_2$ are adjacent neutral triangles on three vertices,
so that $\pi(\cD)$ is a tent. In this case, both of the (competitive
and collaborative) games between the ``base'' vertices lead
to a way of reversing $\cD$. 
See 
Figure \ref{F_rev_tent1}. 

\begin{figure}[h!]
\includegraphics[scale=1.15]{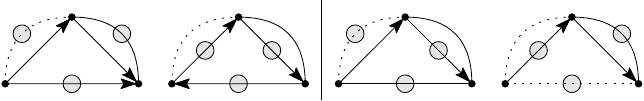}
\caption{
Reversing when $\pi(\cD)$ is a tent formed by two neutral triangles
in types $B_n$ and $D_n$. 
This figure has only
one row, as there is only one possibility 
for $\cD$. 
}
\label{F_rev_tent1}
\end{figure}

{\bf Case 1c.}
Suppose that 
$\Phi=B_n$ and that one of 
$\cG_1,\cG_2$ is a neutral pair and the other is an adjacent
neutral pair or triangle, 
so that $\pi(\cD)$ is a fork. Then 
the half edge game played by the ``middle'' vertex 
and
exactly one of the 
games between the ``base'' vertices 
lead to ways of reversing $\cD$.
See 
Figure \ref{F_rev_fork}. 

\begin{figure}[h!]
\includegraphics[scale=1.15]{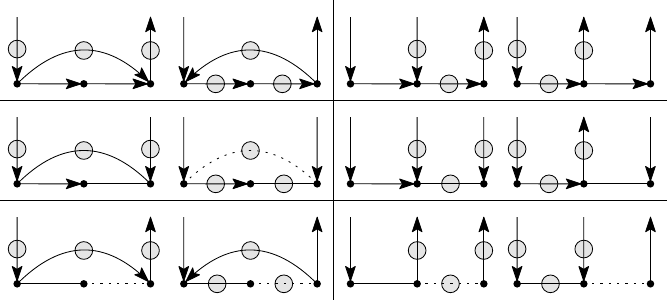}
\caption{
Reversing when $\pi(\cD)$ is a fork
in type $B_n$. 
Each row corresponds to a possible configuration 
for $\cD$. 
}
\label{F_rev_fork}
\end{figure}

By Cases 1a--c, statement (1) follows, that is,  in types
$B_n$ and $D_n$ the network $N$ is
always a single diamond, as in Figure \ref{F_adj_BD}. 

\begin{figure}[h!]
\includegraphics[scale=1.15]{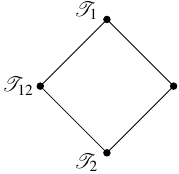}
\caption{
When $\cG_1,\cG_2$ are adjacent,
the only possible $N(\cT_1,\cT_2)$ in types $B_n$ and $D_n$
is a diamond. 
}
\label{F_adj_BD}
\end{figure}

{\bf Case 2.} In type $C_n$, the case that $\pi(\cD)$ is a 
square follows by the same argument as in types $B_n$
and $D_n$. Indeed, recall that any reversal of $\cD$ in two 
steps will involve reversing exactly two games in $\cD$ in 
each step. Therefore, no clovers will be involved in 
such a reversal of $\cD$, and so once again 
$N$ is a single diamond,
yielding statement (2). 

{\bf Case 3a.} Suppose that $\Phi=C_n$
and that $\pi(\cD)$ is a tent formed by 
two adjacent neutral triangles on three vertices. Then by 
Case 1b, $N$ contains a single diamond. 
However, using the loop game $\ell$ played by the ``middle'' vertex, 
we obtain an additional path of length two between $\cT_1,\cT_2$. 
We can use $\ell$ to reverse the two games
on one ``side'' of the tent. Then, in turn, we can use $\ell^*$
to reverse the other two games. See Figure \ref{F_rev_tent2}. 
Hence $N$ is a split diamond in this case. 

\begin{figure}[h!]
\includegraphics[scale=1.15]{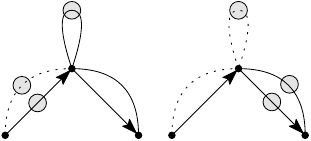}
\caption{
The additional way of reversing when $\pi(\cD)$ is a tent
in type $C_n$. 
}
\label{F_rev_tent2}
\end{figure}

{\bf Case 3b.} Suppose that $\Phi=C_n$
and that $\pi(\cD)$ is a tent formed by 
two adjacent neutral clovers. Then $\cD$ is on three vertices, 
and so by Case 1b, we find that 
$N$ is a split diamond, once again. 

By Cases 3a--b, statement (3) follows. 
The difference between Cases 3a and 3b are depicted in 
the first column of Figure \ref{F_adj_C}.

{\bf Case 4.} Finally, suppose that 
$\Phi=C_n$
and that $\pi(\cD)$ is a hanger. We will argue that 
this case corresponds to the second and third columns in 
Figure \ref{F_adj_C}. 

\begin{figure}[h!]
\includegraphics[scale=1.15]{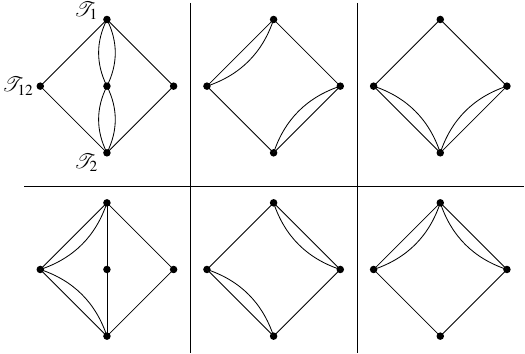}
\caption{
The additional (other than the single diamond)
possible $N(\cT_1,\cT_2)$ 
in type $C_n$, when $\cG_1,\cG_2$ are adjacent. 
{\it From left to right:} Split, double and heavy diamonds. 
In each cell, the leftmost vertices are $\cT_1$, $\cT_{12}$ and $\cT_2$, 
as in the first cell. 
}
\label{F_adj_C}
\end{figure}

Note that, in this case, exactly one of $\cG_1,\cG_2$ is a neutral clover
and the other is an adjacent neutral triangle. Suppose that 
the loop game $\ell$ in $\cD$ is played by vertex $x$ and that the other 
two vertices in $\cD$ are $y,z$. Note that any way of reversing 
$\cD$ in two steps will involve reversing $\ell$ exactly once, 
and so each path of length two from $\cT_1$ to $\cT_2$ 
will contain
exactly one double edge. 

The four cases
in the second and third columns in 
Figure \ref{F_adj_C} can be seen by considering the other 
games
played between $x,y$ and $x,z$ that are not in $\cD$. 
Depending on their outcomes, each such game 
either creates a clover with loop $\ell$ at $x$ 
or else forms a neutral triangle together with 
the two ``opposite'' games
in $\cD$. After this clover/triangle is reversed, 
the triangle/clover, which was not initially, becomes 
present. 
See Figure \ref{F_rev_hanger}.

\begin{figure}[h!]
\includegraphics[scale=1]{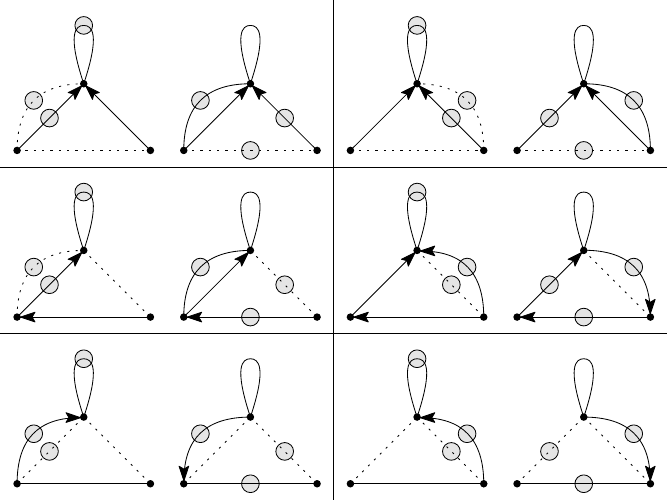}
\caption{
Reversing when $\pi(\cD)$ is a hanger
in type $C_n$. 
Each row corresponds to a possible configuration 
for $\cD$. 
}
\label{F_rev_hanger}
\end{figure}

The proof is complete. 
\end{proof}

\subsection{Extended networks}
\label{S_ex_net}

Lemmas \ref{L_NclassDis} and \ref{L_NclassAdj} above 
classify the types of interchange networks
$N(\cT_1,\cT_2)$. 
Recall that such a network contains all paths of length
two between $\cT_1,\cT_2$. 

\begin{definition}
We define the {\it extended interchange network}
$\Nex(\cT_1,\cT_2)$ to be the union of 
$N(\cT_1',\cT_2')$ over all 
``antipodal'' pairs $\cT_1',\cT_2'$ in 
$N(\cT_1,\cT_2)$ at distance two. 
\end{definition}

Single, double and quadruple diamonds 
are ``stable,'' in the sense
that $\Nex=N$.
In contrast, split and heavy diamond networks 
extend to a type of structure, which we call a 
\textit{crystal}. 
See Figure \ref{F_crystal}.

\begin{figure}[h!]
\includegraphics[scale=1.25]{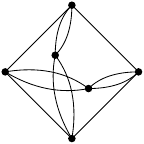}
\caption{
The 
crystal extended interchange network.}
\label{F_crystal}
\end{figure}

\begin{remark}
All of the interchange networks that we have described, 
except the single diamond, 
can be found in Figure \ref{F_drum} above. 
This demonstrates how interchange networks can 
overlap and 
mesh together to form the interchange graph of a 
given score sequence.
\end{remark}

\begin{lemma}
\label{L_Nstar}
Suppose that $\cT_1,\cT_2$  are at distance two 
in $\ig(\Phi,{\bf s})$. Let $N=N(\cT_1,\cT_2)$ 
and $\hat N=\hat N(\cT_1,\cT_2)$. 
\begin{enumerate}
\item If $N$ is a single, double or quadruple
diamond, then the extended interchange network $\Nex=N$.
\item Otherwise, if $N$ is a split or heavy 
diamond, then 
the extended interchange network 
$\Nex$
is a crystal. 
\end{enumerate}
\end{lemma}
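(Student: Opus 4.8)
The plan is to feed the network classification of Lemmas~\ref{L_NclassDis}--\ref{L_NclassAdj} back into itself. Each of the five possible networks $N$ has only a handful of vertices, so I will list the pairs of its vertices lying at distance two in $\ig(\Phi,{\bf s})$, and for each such ``antipodal'' pair $\{\cT_1',\cT_2'\}$ read off the difference $\cD'=\cT_1'\setminus\cT_2'$ from the structure of $N$ and apply the classification once more to name $N(\cT_1',\cT_2')$; the union of these is $\Nex$.

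For part (1), take a single diamond, so $N$ is the four-cycle $\cT_1-\cT_{12}-\cT_2-\cT_{12}'-\cT_1$. Its only distance-two pairs in $\ig(\Phi,{\bf s})$ are the two diagonals $\{\cT_1,\cT_2\}$ and $\{\cT_{12},\cT_{12}'\}$: neither is an edge, since (as in the set-up of Lemma~\ref{L_NclassAdj}) the differences $\cD$ and $\cD'$ have four or six games, whereas every type $\Phi$ generator has exactly three. By hypothesis $N(\cT_1,\cT_2)=N$, while $\cD'$ is $\cG_1\cup\cG_2$ (if $\cG_1,\cG_2$ are disjoint) or $\cG_1\triangle\cG_2$ (if adjacent); re-applying Lemma~\ref{L_NclassDis} or~\ref{L_NclassAdj} returns a single diamond whose two length-two paths have midpoints $\cT_1$ and $\cT_2$, so $N(\cT_{12},\cT_{12}')$ is, as a graph, $N$ itself, giving $\Nex=N$. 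The quadruple diamond runs identically, with all four edges double and $\cD'$ a disjoint union of two clovers. The double diamond requires a touch more care, because it arises in two ways — from a disjoint clover-and-triangle pair ($\cD'$ their disjoint union), or from a hanger ($\cD'=\cG_1\triangle\cG_2$, itself projecting to a hanger) — so one must check that in the hanger case Lemma~\ref{L_NclassAdj}(4) yields a double (not heavy) diamond and that it coincides with $N$. Both reductions rest on the \emph{rigidity} of generators inside a neutral sub-tournament with at most six games: such a tournament decomposes into type $\Phi$ generators in essentially one way, so no stray length-two path (e.g.\ through a spurious balanced triangle $\Delta_b$) can appear. This is the combinatorial fact underlying the bounded case analyses of Figures~\ref{F_dis} and~\ref{F_rev_square}.

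For part (2), let $N$ be a split diamond; by Lemma~\ref{L_NclassAdj} we are in type $C_n$ with $\pi(\cD)$ a tent, and $N$ is the two ``triangle'' length-two paths through $\cT_{12}$ together with the ``loop'' path $\cT_1-\cT_{12}''-\cT_2$ of double edges (Figure~\ref{F_rev_tent2}). Forming $\Nex$ introduces the antipodal pairs involving $\cT_{12}''$, namely $\{\cT_{12},\cT_{12}''\}$, $\{\cT_{12}',\cT_{12}''\}$ and $\{\cT_1,\cT_2\}$; in each, the loop game of the tent now plays the role of a clover's loop, so the difference projects to a hanger, and Lemma~\ref{L_NclassAdj}(4) attaches a double or heavy diamond. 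The union with $N$ is exactly the crystal of Figure~\ref{F_crystal}. When $N$ is instead a heavy diamond, one enters the same crystal at a different vertex: now $\pi(\cD)$ is a hanger, the antipodal pairs inside the heavy diamond have differences projecting to hangers or tents, and Lemma~\ref{L_NclassAdj}(3)--(4) supply the remaining vertices and edges. To finish I verify $\Nex\subseteq\text{crystal}$: every distance-two pair inside the split (resp.\ heavy) diamond has its network inside the crystal — equivalently, the crystal is ``closed'' under one step of extension, the statement that the difference of any distance-two pair of its vertices projects to a square, tent or hanger already handled by Lemmas~\ref{L_PiclassAdj}--\ref{L_NclassAdj}.

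The hard part will be the bookkeeping in part (2): correctly enumerating the antipodal pairs in the split and heavy diamonds, tracking which games of the underlying Coxeter tournaments — above all the loops, which give rise to the double edges — lie in each difference, and confirming that the tent and hanger entry points really do land in the \emph{same} crystal. Showing that a single round of extension already produces all of the crystal and nothing more (its ``closure'') is the step most prone to a missed case, but since the crystal has only finitely many vertices it reduces to a pair-by-pair application of the classification. Part (1), by contrast, collapses quickly once the generator-rigidity fact is in hand.
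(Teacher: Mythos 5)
Your proposal follows essentially the same route as the paper: part (1) by direct inspection via the classification of Lemmas~\ref{L_NclassDis}--\ref{L_NclassAdj}, and part (2) by repeatedly applying Lemma~\ref{L_NclassAdj} to the antipodal pairs of the split and heavy diamonds to assemble (and close off) the crystal. The paper's version is terser --- it simply picks out the specific antipodal pairs that force the remaining edges --- but the underlying argument and the verification points you flag are the same.
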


\begin{proof}
Statement (1) is clear, and can be seen by inspection. 
On the other hand, statement (2) follows by 
repeated application of Lemma \ref{L_NclassAdj}, 
considering the various
antipodal pairs in $N$. 

{\bf Case 1.}  If $N$ is a split diamond, as in the 
first column of 
Figure \ref{F_adj_C}, then consider $\cT_1',\cT_2'$ in 
$N$ that are incident to only single edges in 
$N$. By Lemma \ref{L_NclassAdj}, it follows that 
$N(\cT_1',\cT_2')$ is a split diamond, and therefore
$\Nex$ is a crystal.

{\bf Case 2.}  If $N$ is a heavy diamond, as in the 
third column of 
Figure \ref{F_adj_C}, then consider $\cT_1',\cT_2'$ in 
$N$, each of which incident to exactly one single
edge and one double edge in 
$N$. By Lemma \ref{L_NclassAdj}, it follows that 
$N(\cT_1',\cT_2')$ is a split diamond, and therefore
there is a path of length two between them 
consisting of two single edges in 
$\Nex$. Then, applying 
Lemma \ref{L_NclassAdj}, once again, but this time to the midpoint 
$\cT_{12}'$
along this path
and $\cT_3$ in $N$ that is incident to two single
edges in $N$, we find that 
$\Nex$ is a crystal, as claimed. 
\end{proof}

Recall that two generators are either 
disjoint or have exactly
one game in common. A similar
property holds for extended networks. 

\begin{lemma}
\label{L_NstarED}
Any two distinct extended networks 
$\Nex\neq \Nex'$ are either edge-disjoint 
or have exactly one single or double edge in 
common. 
\end{lemma}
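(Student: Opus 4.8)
The plan is to establish Lemma~\ref{L_NstarED} by first reducing to a local statement about the generators that build the two extended networks, and then analyzing the possible overlaps case by case. First I would unpack what an extended network $\Nex$ actually records: by Lemmas~\ref{L_NclassDis}, \ref{L_NclassAdj} and \ref{L_Nstar}, each $\Nex$ is one of the small ``stable'' diamonds (single, double, quadruple) or a crystal, and in every case $\Nex$ is determined by a small neutral tournament structure --- the relevant difference $\cD$ together with the handful of ``switch'' games (the middle game, loop, or half edge) that mediate the length-two paths. So each edge of $\ig(\Phi,{\bf s})$ that lies in $\Nex$ corresponds to the reversal of a specific generator $\cG$ sitting on a fixed small vertex set (three or four players), and I would encode $\Nex$ by the pair $(\cD, \cW)$ where $\cW$ is this set of witnessing games. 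The key point to isolate is that the vertex support and the set of games involved in $\Nex$ is bounded and rigidly determined by any single edge together with its position in $\Nex$.

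The main steps would then be: (i) suppose $\Nex$ and $\Nex'$ share at least one edge $e$, corresponding to a reversal $\cT \mapsto \cT * \cG$; (ii) show that the local data of $\Nex$ near $e$ --- which generator $\cG$ is reversed, which further generators are adjacent to $\cG$ within the Coxeter tournament $\cT$ --- is forced, so that sharing $e$ forces $\Nex$ and $\Nex'$ to agree on a whole ``block'' around $e$; (iii) conclude that either they coincide, or the shared part is exactly one edge (or one double edge, in the clover case). For the crystal case I would lean on the already-proved fact (alluded to as Lemma~\ref{L_crystals2} in the introduction) that any two crystals share at most one single edge, plus the analysis in Lemma~\ref{L_Nstar} showing how a crystal is reconstructed from any of its diamond sub-networks; this reduces the crystal-vs-crystal and crystal-vs-diamond subcases to statements about whether the reconstruction procedure is forced once one edge is fixed. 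For the purely ``stable'' diamonds the argument is shorter: a single/double/quadruple diamond is the union of \emph{all} length-two paths between a fixed antipodal pair $\cT_1,\cT_2$, so if two such diamonds share an edge $e=\{x,y\}$ then $e$ is either a ``side'' edge (forcing the two endpoints of the diamond, hence the whole diamond) or the opposite configuration, and a short case check on which of $x,y$ plays the role of $\cT_1$, $\cT_{12}$ or $\cT_2$ pins everything down.

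The main obstacle I expect is the crystal-versus-crystal overlap, precisely because crystals are the largest and most flexible of the extended networks and because the double edges coming from clovers introduce extra multiplicity. Two crystals could a priori meet along a longer path or along a more complicated subgraph; ruling this out requires knowing both that a crystal is rigidly determined by a small amount of local data \emph{and} that two distinct crystals cannot be ``locally indistinguishable'' along two edges. I would handle this by invoking the combinatorial structure theory for crystals developed earlier in the paper: a crystal is built on a fixed triple of players with a designated loop (or pair of games) acting as the switch, and Lemma~\ref{L_crystals2} already bounds crystal-crystal intersections by a single edge. The remaining work is bookkeeping: checking that a diamond cannot be contained in two different crystals except through a shared edge, and that a diamond and a crystal, or two diamonds of different types, cannot share more than the stated single or double edge. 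Because the supports are all of size at most four and the generator types are enumerated in Figures~\ref{F_genD}--\ref{F_genC}, this bookkeeping is finite and routine once the rigidity statements are in place, so I would present it as a case analysis organized by the type of the shared edge (single versus double) and the types of $\Nex,\Nex'$, deferring the mechanical verifications to inspection of the figures.
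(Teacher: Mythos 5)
Your plan is in the right family (a rigidity statement plus a finite case analysis), but it has two genuine gaps. First, the appeal to Lemma \ref{L_crystals2} for the crystal--crystal case is circular: in the paper that lemma is \emph{deduced from} Lemma \ref{L_NstarED} (its proof begins by invoking Lemma \ref{L_NstarED} to reduce to showing that two crystals cannot share a single edge), so you cannot lean on it here without supplying an independent argument for precisely the hardest subcase, namely that two distinct crystals cannot overlap in more than one edge. Second, your step (ii) --- that sharing an edge $e$ forces $\Nex$ and $\Nex'$ to ``agree on a whole block around $e$'' --- is false as stated: a single edge $\{x,y\}$ lies in many distinct extended networks (one for each neighbour $z$ of $y$ at distance two from $x$), which is exactly why the lemma must allow a one-edge overlap between distinct networks. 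An edge is the wrong anchor for the rigidity you need.

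The paper anchors rigidity at antipodal pairs instead: by an elementary inspection of the four network types, \emph{any} two vertices at distance two inside an extended network $\Nex$ regenerate that same network, i.e.\ $\Nex=\Nex(\cT_1,\cT_2)$ for every such pair. Since each of the four types is small enough that any three of its vertices contain an antipodal pair, the lemma follows in two lines: if $\Nex\neq\Nex'$ shared three vertices they would share an antipodal pair and hence coincide, so they share at most two vertices, i.e.\ at most one single or double edge. Your closing paragraph does flag the need for exactly this kind of statement (``two distinct crystals cannot be locally indistinguishable along two edges''), but the proposal never formulates or proves it, and without it the ``routine bookkeeping'' over pairs of network types cannot actually be closed out.
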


\begin{proof}
By Lemma \ref{L_Nstar} there are only four types of extended networks. 
By an elementary case analysis, it can be seen that  any two 
antipodal (distance two)  vertices
$\cT_1,\cT_2$ in an extended network $\Nex$ give
rise to the same extended network $\Nex$. That is, 
$\Nex=\Nex(\cT_1,\cT_2)$, for any such $\cT_1,\cT_2$. 
From this observation the result follows,
since if two extended networks $\Nex, \Nex'$ 
share at least three vertices, 
then they necessarily 
have at least one antipodal 
pair of vertices in common. 
\end{proof}

\subsection{Properties of crystals}

In this section, we obtain two key properties
of crystals, which will play a crucial role in the   
type $C_n$ 
couplings
discussed in Section \ref{S_MainC} below. 

First, we note that 
crystals cannot share a single edge. 
We will use this, together with Lemma \ref{L_NstarED}, 
to extend natural  
couplings on networks to a coupling
on the full interchange graph. 

\begin{lemma}
\label{L_crystals2}
Suppose that $\Nex \neq \Nex'$ are distinct crystals
in an interchange network $\ig(C_n,{\bf s})$.
Then $\Nex$, $\Nex'$
are either edge-disjoint or share a double edge. 
That is, no such $\Nex\neq \Nex'$ share a single edge. 
\end{lemma}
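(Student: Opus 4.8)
The plan is to argue by contradiction, assuming that two distinct crystals $\Nex, \Nex'$ share a single edge $\{\cT, \cT'\}$, and then to derive a contradiction from the local structure of crystals established in Lemmas \ref{L_NclassAdj} and \ref{L_Nstar}. The starting observation is structural: by Lemma \ref{L_Nstar}, a crystal arises only as the extension of a split or heavy diamond, and by inspection of Figure \ref{F_crystal} every single edge of a crystal lies on a path of length two between an antipodal pair of vertices whose network $N$ is a split diamond (Case 3 of Lemma \ref{L_NclassAdj}). Thus a single edge in a crystal records a \emph{specific} kind of local move: by the classification, the associated difference $\pi(\cD)$ is a tent, realised either by two adjacent neutral triangles sharing a loop-free game, or by two adjacent neutral clovers. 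I would first extract from this exactly which generator reversals the single edge $\{\cT,\cT'\}$ corresponds to and which three (or fewer) players are involved, i.e., pin down the games in $\cD(\cT,\cT')$ and the ``middle'' vertex $x$ carrying the loop that supplies the extra paths.

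Next I would use the key uniqueness fact from the proof of Lemma \ref{L_NstarED}: any antipodal pair in an extended network determines the whole extended network. So if $\{\cT,\cT'\}$ is a single edge in $\Nex$, then $\cT,\cT'$ are endpoints of a single edge inside a crystal, hence — tracing one more step along the crystal — there is a canonically determined antipodal pair $(\cT_1,\cT_2)$ in $\Nex$ with $\Nex = \Nex(\cT_1,\cT_2)$, built from the games of $\cD(\cT,\cT')$ together with the loop game at $x$ and the companion games between $x$ and the other two vertices. The same construction applied inside $\Nex'$ produces an antipodal pair $(\cT_1',\cT_2')$ with $\Nex' = \Nex(\cT_1',\cT_2')$. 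The point is that both of these antipodal pairs are manufactured out of the \emph{same} data — the shared edge $\{\cT,\cT'\}$ and the unique loop game available at the middle vertex $x$ — because in type $C_n$ each player has exactly one loop. I would then show that the choices are forced: given $\{\cT,\cT'\}$, the set $\cD$, the middle vertex $x$, its unique loop, and the two games $x$–$y$, $x$–$z$ (whose orientations in $\cT$ are fixed once $\cT$ is fixed), the crystal containing $\{\cT,\cT'\}$ as a single edge is uniquely determined. Hence $\Nex = \Nex'$, contradicting $\Nex \neq \Nex'$.

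It remains to handle the alternative conclusion, namely that $\Nex,\Nex'$ may share a \emph{double} edge: this is consistent, because a double edge in a crystal corresponds to a clover reversal, and the above forcing argument breaks exactly when the shared edge is a double edge — there the ``extra'' paths in the crystal come from the two competitive/collaborative games between $x$ and its neighbours rather than from the loop, and these are not pinned down by the double edge alone, so two different crystals can genuinely meet along a common double edge (as one sees in the snare drum, Figure \ref{F_drum}). So the dichotomy is: either edge-disjoint, or the overlap, by Lemma \ref{L_NstarED}, is a single edge (ruled out above) or a double edge (allowed). Finally, one must also exclude the degenerate possibility that $\Nex$ and $\Nex'$ share more than one edge while their intersection is not a single edge or double edge; but Lemma \ref{L_NstarED} already forces any overlap of two distinct extended networks to be a single or double edge, so this case does not arise.

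The main obstacle I anticipate is the forcing step in the second paragraph: showing rigorously that the shared single edge $\{\cT,\cT'\}$, together with the constraint that $x$ has a unique loop in type $C_n$, determines the entire crystal. This requires a careful but finite case analysis over the possible $\pi(\cD)$ (tent from two triangles vs.\ tent from two clovers, plus the orientations of the two off-diagonal games $x$–$y$ and $x$–$z$ in $\cT$), checking in each case that the crystal's remaining vertices and edges are reconstructed without ambiguity. The parity conditions special to type $C_n$ (loop games contribute $\pm 1$, not $\pm 1/2$) are what make the loop the \emph{only} source of the extra antipodal paths through a single edge, and this is precisely the feature that fails for double edges — so the asymmetry in the conclusion of the lemma is not an artefact but is built into why the argument works.
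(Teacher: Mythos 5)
Your overall strategy (contradiction plus Lemma \ref{L_NstarED}) is reasonable, but the argument as written has a genuine gap at exactly the point you flag: the ``forcing'' step is never carried out, and the way you set it up rests on an unjustified assumption. You treat the middle vertex $x$ (the player carrying the loop that supplies the extra paths) as data determined by the shared single edge $\{\cT,\cT'\}$. It is not: a single edge of a crystal corresponds to reversing a neutral triangle, which involves one game between each pair of the crystal's three players, and the apex of a containing crystal could a priori be any one of those three players. The apex is a feature of the crystal, not of the edge, so two crystals sharing the single edge need not be ``manufactured out of the same data,'' and the claim that both reconstructions are forced from the same unique loop collapses. Any repair must rule out crystals with different apexes over the same triangle, which is essentially the whole content of the lemma; deferring this to ``a careful but finite case analysis'' leaves the proof incomplete. (There is also some conflation between the difference of the adjacent pair $\cT,\cT'$, which is a single triangle, and the tent-shaped difference of an antipodal pair, which is where the apex actually becomes visible.)

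The paper's proof avoids reconstruction entirely and is a short counting argument: each crystal is associated with three players, each single edge of a crystal reverses a neutral triangle on all three of them, and each endpoint of a single edge of a crystal is incident to exactly two single edges of that crystal. If two distinct crystals shared a single edge, each of its endpoints would therefore contain three distinct neutral triangles on the same three players; but a tournament restricted to three given players contains at most two neutral triangles. This single observation disposes of all apex configurations at once, which is precisely the case analysis your approach still owes.
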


\begin{proof}
By the proof of Lemmas \ref{L_NclassAdj} and \ref{L_Nstar}, it can be
seen that each crystal is associated with three players. 
Each double edge in the crystal corresponds to 
reversing a neutral clover involving two of them, 
and each single edge 
corresponds to reversing
a neutral triangle involving all three. 

By Lemma \ref{L_NstarED}, it suffices to show that 
two crystals $\Nex \neq \Nex'$
cannot share a single edge. 
To see this, simply note that otherwise 
both tournaments joined by this single edge
would contain a tournament on three players
with three neutral triangles, which is impossible. 
See Figure \ref{F_crystals2}. 
\end{proof}

\begin{figure}[h!]
\includegraphics[scale=1.25]{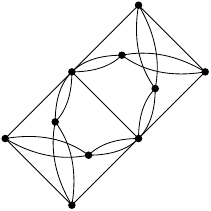}
\caption{
It is impossible for two (distinct) crystals
to share a single edge,
as depicted above,
since there are at most two neutral triangles on 
any given three players
in a tournament.
However, each of the two ``middle'' vertices
in this figure are incident to three single edges. 
}
\label{F_crystals2}
\end{figure}

The previous result shows that single edges can be in at most
one crystal. Double edges, on the other hand, can be in more
than one. The following result gives an upper bound on this number, 
which is related to the re-weighting $w$ of the
graph metric, discussed in Section \ref{S_MainC}, 
under which our coupling will be contractive. 

Recall that $d=d(C_n,{\bf s})$
is the degree of $\ig(C_n,{\bf s})$. 

\begin{lemma}
\label{L_crystals_ee}
Any given double edge 
in an interchange network $\ig(C_n,{\bf s})$
is contained in at most
$\min\{d,2n\}$ crystals. 
\end{lemma}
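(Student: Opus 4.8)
The plan is to prove the two bounds $k\le d$ and $k\le 2n$ separately, where $k$ denotes the number of crystals containing the prescribed double edge; in each case I fix that double edge, look at one of its two endpoints, and count the single edges incident to that endpoint which lie in some crystal through it, using Lemma \ref{L_crystals2} to keep these single edges disjoint across distinct crystals.

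First I would fix notation and record the structural input. Write the given double edge as $e=\{\cT_0,\cT_0^*\}$, so that $\cT_0^*=\cT_0*\Theta$ for a uniquely determined neutral clover $\Theta\subset\cT_0$, supported on a pair of players $\{a,b\}$. Recall from the proofs of Lemmas \ref{L_NclassAdj}, \ref{L_Nstar} and \ref{L_crystals2} that a crystal (Figure \ref{F_crystal}) is a fixed graph associated with three players, each of whose double edges reverses a neutral clover on a pair among those three players, and each of whose single edges reverses a neutral triangle on all three; moreover every vertex of a crystal is incident to at least one of its single edges. The key observation is that if $\Nex$ is a crystal containing $e$, then the double edge $e$ of $\Nex$ reverses the clover $\Theta$, which is supported on $\{a,b\}$, so $\{a,b\}$ are two of the three players of $\Nex$; writing the third as $c\in[n]\setminus\{a,b\}$, every single edge of $\Nex$ incident to $\cT_0$ reverses a neutral triangle on $\{a,b,c\}$ inside $\cT_0$.

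Now let $\Nex_1,\dots,\Nex_k$ enumerate the distinct crystals containing $e$, with respective third players $c_1,\dots,c_k$, and for each $i$ pick a single edge $f_i$ of $\Nex_i$ incident to $\cT_0$. By Lemma \ref{L_crystals2}, distinct crystals through $e$ share only the double edge $e$ and no single edge, so $f_1,\dots,f_k$ are pairwise distinct. For the bound $k\le d$: the $f_i$ are $k$ distinct single edges incident to the vertex $\cT_0$, which has degree $d=d(C_n,{\bf s})$ in the multigraph $\ig(C_n,{\bf s})$ by Theorem \ref{T_deg}; since the degree counts each double edge with multiplicity two, there are at most $d$ single edges at $\cT_0$, hence $k\le d$. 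For the bound $k\le 2n$: each $f_i$ reverses a neutral triangle on $\{a,b,c_i\}$ inside $\cT_0$, and for each fixed $c\in[n]\setminus\{a,b\}$ there are at most two neutral triangles on $\{a,b,c\}$ in $\cT_0$ (the fact invoked in the proof of Lemma \ref{L_crystals2}), hence at most two indices $i$ with $c_i=c$; summing over the at most $n-2$ possible values of $c$ gives $k\le 2(n-2)\le 2n$. Together these yield $k\le\min\{d,2n\}$.

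The main obstacle is the structural bookkeeping assembled in the second step: that a crystal is one fixed graph, that its single and double edges correspond respectively to neutral triangles and clovers on a fixed triple of players, and that the clover carried by a prescribed double edge pins down two of those three players — all of which must be extracted from the classification of networks in Lemmas \ref{L_NclassAdj} and \ref{L_Nstar} and from the analysis in the proof of Lemma \ref{L_crystals2}. Once that is in place, and given the edge-disjointness of distinct crystals through a common double edge supplied by Lemma \ref{L_crystals2}, the two counting arguments are short: one is a degree count at $\cT_0$, and the other bounds the third player and uses that a triple of players of a tournament carries at most two neutral triangles.
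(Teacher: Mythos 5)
There is a genuine gap, and it lies in the structural premise you use to set up both counts: the claim that ``every vertex of a crystal is incident to at least one of its single edges'' is false. In a crystal (Figure \ref{F_crystal}), the four single edges form a $4$-cycle on four of the six vertices, and the remaining two vertices are incident \emph{only} to double edges of the crystal. Consequently, each double edge of a crystal has one endpoint carrying two of the crystal's single edges and one endpoint carrying none (that endpoint meets the crystal only through its two double edges). This is exactly why the paper's own argument for the degree bound says that a crystal containing the double edge $e=\{\cT_0,\cT_0^*\}$ contributes to $\cT_0$ \emph{either} two single edges \emph{or} an additional double edge: which alternative occurs depends on which side of $e$ the crystal's single edges sit, and this can differ from crystal to crystal. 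So your instruction ``for each $i$ pick a single edge $f_i$ of $\Nex_i$ incident to $\cT_0$'' cannot always be carried out, and both of your counts silently discard every crystal whose single edges all lie at the other endpoint $\cT_0^*$. As written, your argument only bounds the number of crystals of the first kind, not $k$.

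The repair is close to what the paper actually does, and your overall strategy (two separate bounds, disjointness from Lemma \ref{L_crystals2}/\ref{L_NstarED}, and the ``at most two neutral triangles on a fixed triple'' fact) is the right one. For the degree bound, work at the single endpoint $\cT_0$ but count multigraph degree rather than single edges: by Lemma \ref{L_NstarED} distinct crystals through $e$ are edge-disjoint apart from $e$ itself, and each contributes at least two units of degree at $\cT_0$ beyond the two units from $e$ (either two single edges or one extra double edge), giving $k\le (d-2)/2\le d$. For the $2n$ bound, fix the third player $c$ and count at \emph{both} endpoints: if three crystals on $\{a,b,c\}$ contained $e$, then by pigeonhole one of $\cT_0,\cT_0^*$ would carry four of their single edges, hence four distinct neutral triangles on $\{a,b,c\}$ in that tournament, contradicting the bound of two; so at most two crystals per choice of $c$, and $k\le 2(n-2)$.
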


\begin{proof}
Consider a double edge between some $\cT_1,\cT_2$.
By Lemma \ref{L_NstarED}, each crystal containing it 
corresponds to an additional double edge
or two single edges incident to $\cT_1$. 
It follows that there are at most
$(d-2)/2$ such crystals. 

The second bound is somewhat more complicated. 
Recall, as noted in the proof of Lemma \ref{L_crystals2},  
that each crystal is associated with three
players. 
Suppose that a double edge between some $\cT_1,\cT_2$
is associated with a neutral clover involving players $i,j$. 
We claim that, for any other player $k$, 
there are at most two crystals
associated with $i,j,k$. 
To see this, observe that, if there were three, 
then one of $\cT_1,\cT_2$ would be incident to four single edges 
in these crystals. However, this would imply that in one of 
$\cT_1,\cT_2$ there are four neutral triangles
on $i,j,k$, which is impossible. Indeed, as noted in the proof of 
Lemma \ref{L_crystals2}, there can be at most two. 
Therefore, there are at most $2(n-2)$ crystals containing 
any given double edge. 
(In fact, the upper bound $n-2$ can be proved, but involves 
a more careful analysis.)
\end{proof}

\section{Rapid mixing}
\label{S_Coupling}

Using the results of the previous section, 
we show that simple random walk 
on any given $\ig(\Phi,{\bf s})$ is rapidly mixing. 
The idea is to first define couplings
on extended networks $\Nex$. 
We then argue that these 
couplings are
compatible, and extend to a full coupling.

In types $B_n$ and $D_n$, rapid mixing then follows by
Theorem \ref{T_PC}, using the standard weighting $w=\delta$ given 
by the graph distance $\delta$ in $\ig(\Phi,{\bf s})$. 
In type $C_n$, we will need to select a special re-weighting 
$w\neq\delta$, accounting for the presence of crystals in the interchange 
graphs of this type. 

For a tournament $\cT\in \tour(\Phi,{\bf s})$, we let $\cE(\cT)$
denote the set of edges in $\ig(\Phi,{\bf s})$ incident to $v(\cT)$.

\subsection{Coupling in $B_n$ and $D_n$}
\label{S_MainBD}

We begin with the simplest cases of types 
$B_n$ and $D_n$. These types are the most straightforward, 
since then all networks $N(\cT_1,\cT_2)$ are single diamonds,
and no re-weighting of the graph metric is necessary. 

\begin{theorem}
\label{T_MainBD}
Let $\Phi=B_n$ or $D_n$. Fix any ${\bf s}\in \score(\Phi)$. 
Then lazy simple random walk $(\cT_n:n\ge0)$
on the Coxeter interchange graph $\ig(\Phi,{\bf s})$ is rapidly mixing 
in time $t_{\rm mix}= O(d \log n)$. 
\end{theorem}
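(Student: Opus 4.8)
The plan is to apply path coupling (Theorem~\ref{T_PC}) with the standard graph metric $w=\delta$ on $\ig(\Phi,{\bf s})$. Since $\ig(\Phi,{\bf s})$ is connected with diameter $D=O(n^2)$ by Theorem~\ref{T_MainInt}, and since lazy simple random walk on a $d$-regular graph moves to a uniformly chosen neighbour with probability $1/(2d)$ (staying put otherwise), it suffices to exhibit, for every edge $\{\cT',\cT''\}$ of $\ig(\Phi,{\bf s})$, a coupling $(\cT_1',\cT_1'')$ started from $(\cT',\cT'')$ with $\E[\delta(\cT_1',\cT_1'')] \le (1-\alpha)$ for $\alpha$ of order $1/d$; then $t_{\rm mix}=O(\alpha^{-1}\log D_w) = O(d\log n)$, using $\log D = O(\log n)$. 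So the entire task reduces to constructing the one-step coupling across a single edge and bounding the expected change in distance.

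First I would set up the coupling. Let $\cT'$ and $\cT''=\cT'*\cG$ be adjacent via a generator $\cG\subset\cT'$ (a cyclic triangle, balanced triangle, or neutral pair --- note that in types $B_n$ and $D_n$ there are no clovers, so all edges are single edges). The coupling picks the common ``laziness'' coin first; if both chains stay put, distance is unchanged. Otherwise, each chain selects a generator in its current tournament to reverse. The edges of $\ig$ incident to $v(\cT')$ and to $v(\cT'')$ are in near-bijection: generators of $\cT'$ disjoint from $\cG$ correspond to the ``same'' generators in $\cT''$, and the coupling reverses the corresponding generator in each chain. For such moves, distance stays at $1$. The only problematic moves are those involving a generator $\cG_1$ of $\cT'$ that is \emph{adjacent} to (shares a game with) $\cG$, and symmetrically on the $\cT''$ side; here the two chains' move-sets differ. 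This is where the network classification does all the work: by Lemma~\ref{L_NclassAdj}(1), in types $B_n$ and $D_n$ the network $N(\cT_1,\cT_2)$ for antipodal pairs at distance two is \emph{always a single diamond}, and every network involving $\cG$ among disjoint generators is a single diamond too by Lemma~\ref{L_NclassDis}. A single diamond has exactly two length-two paths between its antipodal vertices, which lets me pair up the ``bad'' moves of the two chains so that after one step the chains either coincide (distance $0$) or remain at distance $1$ --- never increasing to $2$. Concretely: when chain $1$ reverses $\cG_1^*$ (adjacent to $\cG$), I have chain $2$ reverse the corresponding generator making them land on the same vertex or a common neighbour, exploiting that the diamond is ``closed.''

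Then I would do the bookkeeping. Out of the $d$ neighbours of $\cT'$, there is at least one move --- namely chain $1$ reversing $\cG$ itself (undoing the discrepancy), matched with chain $2$ staying put or reversing the corresponding generator --- which collapses the distance from $1$ to $0$, contributing a strictly negative drift of size $\Theta(1/d)$; and I must check that \emph{no} coupled move pushes the distance to $2$, which is exactly the content of the single-diamond property. Hence $\E[\delta(\cT_1',\cT_1'')] \le 1 - c/d$ for an absolute constant $c>0$, giving $\alpha = \Theta(1/d)$ and $t_{\rm mix} = O(d\log n)$. The main obstacle is verifying that the local move-matching is globally consistent --- i.e., that the ``same generator'' correspondence between $\cE(\cT')$ and $\cE(\cT'')$, together with the diamond-based matching for adjacent generators, assembles into a genuine coupling (a bona fide coupling of the two full one-step distributions, with correct marginals and no double-counting of neighbours). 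This is where the precise enumeration in Lemmas~\ref{L_NclassDis} and~\ref{L_NclassAdj}, and the fact that distinct generators meet in at most one game, are essential; once that consistency is in hand, the contraction estimate is a short count.
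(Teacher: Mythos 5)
Your proposal is correct and follows essentially the same route as the paper: path coupling with the unweighted graph metric, the diameter bound $D=O(n^2)$ from Theorem~\ref{T_MainInt}, and an edge bijection $\psi\colon\cE(\cT')\to\cE(\cT'')$ built from the fact that every network containing the discrepancy edge is a single diamond (Lemmas~\ref{L_NclassDis} and~\ref{L_NclassAdj}), so that the chains coalesce with probability $\Theta(1/d)$ and otherwise stay at distance $1$. The consistency concern you flag at the end is resolved in the paper exactly as you anticipate, via Theorem~\ref{T_deg} together with Lemmas~\ref{L_Nstar} and~\ref{L_NstarED}.
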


\begin{proof}
Let $\Phi=B_n$ or $D_n$. 
Consider two copies of lazy simple random walk $(\cT_n')$ and 
$(\cT_n'')$ on $\ig(\Phi,{\bf s})$, 
started from neighboring $\cT_0',\cT_0''\in \tour(\Phi,{\bf s})$. 
Then $\cT_0''=\cT_0'*\cG$ 
for 
some type $\Phi$ 
generator $\cG\subset\cT_0'$. 
In this sense, the random walks start at distance 1. 

We will construct a contractive coupling of 
$\cT_1',\cT_1''$, 
such that the expected distance between 
$\cT_1',\cT_1''$ is strictly less than $1$, 
for every 
choice of $\cT_0',\cT_0''$. 
In fact, in this coupling, the $\cT_1',\cT_1''$ will coincide
with probability $1/d$, and otherwise  
remain at distance $1$. 

More specifically, we couple $\cT_1',\cT_1''$ using  
the natural bijection $\psi$ from $\cE(\cT_0')$
to $\cE(\cT_0'')$, which fixes the edge $\{\cT_0',\cT_0''\}$
and 
pairs ``opposite''
edges in each single diamond containing $\{\cT_0',\cT_0''\}$. 
By Lemmas \ref{L_NclassDis} and \ref{L_NclassAdj},
for each type $\Phi$ generator $\cG'\neq\cG\subset\cT_0'$, 
the network
$N(\cT_0'',\cT_0'*\cG')$ is a 
single diamond. 
As such, there are exactly two paths of length two from 
$\cT_0''$ to $\cT_0'*\cG'$. 
One such path passes through $\cT_0'$. 
We let  
\begin{equation}\label{E_phi}
\psi(\{\cT_0',\cT_0'*\cG'\})=\{\cT_0'',\cT_0''*\cG''\}
\end{equation}
be the first edge along the other such path. 
By Theorem \ref{T_deg} and Lemmas \ref{L_Nstar} and \ref{L_NstarED},
$\psi$ is a bijection 
from $\cE(\cT_0')$
to $\cE(\cT_0'')$.

Finally, we define the coupling of $\cT_1',\cT_1''$ as follows. 
Let $\{\cT_0',\cT_0'* \cG'\}$ be a 
uniformly random edge in  $\cE(\cT_0')$
and $r_0$ a Bernoulli$(1/2)$ 
random variable (i.e., a fair ``coin flip'').
If $\cG'=\cG$, we put 
$\cT_1'=\cT_1''=\cT_0'$ if $r_0=0$
and $\cT_1'=\cT_1''=\cT_0''$ if $r_0=1$. 
In this case, the coupling contracts. 
On the other hand, if $\cG'\neq \cG$
we put $\cT_1'=\cT_0'$ 
and $\cT_1''=\cT_0''$
if $r_0=0$,
and $\cT_1'=\cT_0'*\cG'$
and $\cT_1''=\cT_0''*\cG''$
if $r_0=1$, 
where $\cG''$ is given by the bijection 
$\psi$ in \eqref{E_phi}.  See Figure \ref{F_coup_BD}.

\begin{figure}[h!]
\includegraphics[scale=1.15]{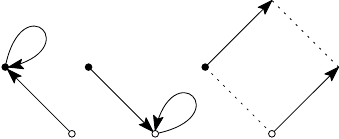}
\caption{A contractive coupling in 
$B_n$ and $D_n$. The black and white vertices
represent 
the starting positions 
$\cT_0',\cT_0''$. 
In the configuration at right, 
corresponding to $\cG'\neq\cG$, 
the loops have been omitted, 
since in this case the walks are either
both lazy or not.
}
\label{F_coup_BD}
\end{figure}

In this coupling, $\cT_1'=\cT_1''$ with probability $1/d$.  
Otherwise, they remain at distance $1$. 
By Theorem \ref{T_MainInt}, 
the diameter of $\ig(\Phi,{\bf s})$ is $D= O(n^2)$.  
Therefore, 
by Theorem \ref{T_PC},  the mixing
time is bounded by $O(d\log n)$. 
\end{proof}

\begin{remark}
Rapid mixing for classical (type $A_{n-1}$) tournaments 
follows as a special case of the argument above.  
\end{remark}

\subsection{Coupling in $C_n$}
\label{S_MainC}

Finally, we investigate mixing in type $C_n$.  

Recall that,  in types $B_n$ and $D_n$,
the coupling was determined by an edge pairing,
given by a bijection $\psi$
from $\cE(\cT_0')$
to $\cE(\cT_0'')$, where $\cT_0',\cT_0''$ are neighboring
tournaments in $\ig(\Phi,{\bf s})$.
The coupling in type $C_n$ is also determined 
by such a $\psi$, however, since there
are a number of different networks in type $C_n$, 
the pairing is more involved. 

The fact (see Lemma \ref{L_crystals2}) 
that distinct crystals cannot
share a single edge is crucial. Otherwise
it would not be possible to extend 
couplings on extended networks
to a full coupling. Roughly speaking, this is because
(see Case 1b in the proof of Theorem \ref{T_MainC} below) 
single edges $\{\cT_0',\cT_0''\}$ in a crystal will need to be paired
with one of the edges in a double edge of the same crystal. 
As such, if there were two crystals with the same single edge, 
a bijective pairing would not be possible.

Furthermore, there is an additional complication in type $C_n$. 
As it turns out, the pairing $\psi$ does not lead to a contractive coupling, 
with respect to the graph distance in $\ig(C_n,{\bf s})$. The problem
concerns the case that the initial starting positions $\cT_0',\cT_0''$
are joined by a single edge in a crystal. 
In this case, the natural coupling is only ``neutral'' 
(i.e., with $\alpha=0$ in Theorem \ref{T_PC}), 
rather than contractive.  

There are (at least) three ways to overcome this difficulty, 
leading to increasingly better bounds on the mixing time. 
The first way is to apply 
Bordewich and Dyer's \cite{BD07}  
path coupling without contraction, leading to 
an upper bound $O(d n^4)=O(n^7)$. It is also possible 
to apply path coupling at time $t=2$, since at this point the coupling
(with respect to the usual graph metric) becomes contractive. 
In doing so, the key is to 
observe that, in the problematic case that 
$\cT_0',\cT_0''$
are joined by a single edge in a crystal, 
if $\cT_1'$ stays within the crystal then 
the pairing $\psi$ (described below) selects a  $\cT_1''$ 
in the crystal
such that 
$\cT_1',\cT_1''$ are now joined by a double edge. 
This argument leads to an upper bound of 
$O(d^2 \log n)=O(n^6\log n)$. 

We will present a third strategy, by re-weighting 
the metric, which yields a better bound. 

Recall (see Lemma \ref{L_crystals2}) 
that single edges in $\ig(C_n,{\bf s})$ 
are contained in at most one crystal. 
Double edges, on the other
hand, can be contained in more than one. 
We define the {\it crystal degree} of a double edge to be 
the number of crystals
containing it. 
We let $\gamma=\gamma(C_n,{\bf s})$ denote the 
{\it maximal crystal degree,} 
over all double edges. 
By Lemma \ref{L_crystals_ee}, we have that 
$\gamma\le \min\{d,2n\}$, 
where $d=d(C_n,{\bf s})$ is the degree of 
$\ig(C_n,{\bf s})$. 

We will assume throughout that $\gamma>0$. 
Indeed, if $\gamma=0$, then there
are no crystals in the interchange graph. In this case, 
a straightforward modification of 
the proof of Theorem \ref{T_MainBD} 
(using the standard graph metric) 
shows that $t_{\rm mix}= O(d \log n)$.

We will prove the following 
result, 
using the weighting $w$ that puts $w=1$ 
on each edge in a double edge
and $w=1+1/\gamma$ on each single edge. 

To be clear, 
this choice of $w$ re-weights the graph distance 
between neighboring vertices
joined by single edges, but not those joined by double edges. 
Specifically, 
if $u,v$ are joined by a single edge then 
$w(u,v)=1+1/\gamma$, and if $u,v$ 
are joined by a double edge
then each edge is given weight $1$, 
and so the weighted distance between $u,v$ remains 
$w(u,v)=1$ (see Definition \ref{D_w}).

\begin{theorem}
\label{T_MainC}
Let $\Phi=C_n$. Fix any ${\bf s}\in \score(\Phi)$. 
Then lazy simple random walk $(\cT_n:n\ge0)$
on the Coxeter interchange graph $\ig(C_n,{\bf s})$ is rapidly mixing. 
If there are no crystals in $\ig(C_n,{\bf s})$ (when $\gamma=0$) then 
 $t_{\rm mix}= O(d \log n)$. Otherwise, we have 
 $t_{\rm mix}= O(\gamma d \log n)$. 
\end{theorem}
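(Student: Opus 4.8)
The plan is to apply path coupling (Theorem \ref{T_PC}) with the re-weighting $w$ described above, so the main work is to exhibit, for every weighted-adjacent pair $\{\cT_0',\cT_0''\}$, a coupling $(\cT_1',\cT_1'')$ of one lazy step that satisfies $\E[w(\cT_1',\cT_1'')]\le (1-\alpha)w(\cT_0',\cT_0'')$ with $\alpha$ of order $1/(\gamma d)$. Since edges come in two flavours, I would split into two main cases according to whether $\cT_0'$ and $\cT_0''$ are joined by a single edge or a double edge. In both cases the coupling is driven by a bijection $\psi\colon\cE(\cT_0')\to\cE(\cT_0'')$ built from the network classification of Lemmas \ref{L_NclassDis} and \ref{L_NclassAdj} together with the extended-network structure of Lemma \ref{L_Nstar}: $\psi$ fixes the edge(s) joining $\cT_0'$ and $\cT_0''$ and, for every other generator $\cG'\subset\cT_0'$, sends $\{\cT_0',\cT_0'*\cG'\}$ to the first edge of a length-two path from $\cT_0''$ to $\cT_0'*\cG'$ avoiding $\cT_0'$; the point is that $\psi$ is well-defined and bijective, and this is exactly where Lemma \ref{L_crystals2} (two crystals never share a single edge) and Lemma \ref{L_NstarED} are needed — they guarantee that the local pairings prescribed inside each (extended) network are mutually consistent and glue into a single bijection on the whole edge-boundary.

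Case analysis. First, when $\{\cT_0',\cT_0''\}$ is a \emph{double} edge, $w(\cT_0',\cT_0'')=1$; using the bijection $\psi$ and a fair coin, with probability $\ge 1/d$ the two walks collapse to the same vertex, and otherwise they end at distance one — but I must be careful about \emph{which} kind of edge they end at, since a double edge has weight $1$ and a single edge has weight $1+1/\gamma$. The favourable collapse probability dominates: one checks that $\E[w(\cT_1',\cT_1'')]\le 1 - c/(\gamma d)$. Second, when $\{\cT_0',\cT_0''\}$ is a \emph{single} edge with weight $1+1/\gamma$: if this single edge lies in no crystal, the network $N(\cT_0',\cT_1'*\cG')$ is (by Lemma \ref{L_NclassAdj}) a single, double or quadruple diamond, the pairing $\psi$ behaves as in the $B_n,D_n$ proof of Theorem \ref{T_MainBD}, and the walks collapse with probability $\ge 1/d$, giving the required contraction. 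The delicate sub-case (Case 1b in the sketch) is when the single edge $\{\cT_0',\cT_0''\}$ \emph{does} lie in a crystal: here the naive pairing is only neutral for the graph metric. The fix is that $\psi$ pairs $\{\cT_0',\cT_0''\}$ with one of the two constituent edges of the double edge of that same crystal, so that with the event "the step stays in the crystal" the two walks end joined by a \emph{double} edge (weight $1$) rather than a single edge (weight $1+1/\gamma$) — a deterministic drop of $1/\gamma$ in weight on that event, which occurs with probability bounded below by a constant times $1/d$ (there are $O(d)$ edges at $\cT_0'$ and $O(1)$ relevant ones in the crystal). Balancing this gain of order $1/(\gamma d)$ against the at most neutral behaviour on the complementary event (where the walks stay at weighted distance $\le 1+1/\gamma$, using that $\psi$ preserves edge type outside the crystal, guaranteed again by Lemma \ref{L_crystals2}) yields $\E[w(\cT_1',\cT_1'')]\le (1+1/\gamma)\bigl(1-c/(\gamma d)\bigr)$.

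With $\alpha$ of order $1/(\gamma d)$ in hand, I finish via Theorem \ref{T_PC}: the weighted diameter satisfies $D_w\le (1+1/\gamma)D = O(n^2)$ by Theorem \ref{T_MainInt}, so $t_{\rm mix}=O(\alpha^{-1}\log D_w)=O(\gamma d\log n)$. The degenerate case $\gamma=0$ is handled separately (no crystals, so every $N$ is a diamond and the unweighted argument of Theorem \ref{T_MainBD} applies verbatim), giving $O(d\log n)$ there.

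**Main obstacle.** I expect the hard part to be verifying that the locally-prescribed pairings actually assemble into a globally consistent bijection $\psi$ on $\cE(\cT_0')$ in the crystal case — i.e., checking that no edge is claimed by two different networks in incompatible ways. This is precisely where Lemma \ref{L_crystals2} and Lemma \ref{L_NstarED} must be invoked with care, and it is the step most likely to require a detailed, figure-driven case check (the couplings depicted in Figures \ref{F_coup_C1a}, \ref{F_coup_C1b} and \ref{F_coup_C2}). Getting the constant $\alpha\asymp 1/(\gamma d)$ rather than something worse also hinges on the sharp combinatorial bound $\gamma\le\min\{d,2n\}$ from Lemma \ref{L_crystals_ee}, ensuring the weight perturbation $1/\gamma$ is never too small to be useful.
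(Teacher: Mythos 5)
Your proposal is correct and follows essentially the same route as the paper's proof: the same split into (single edge in no crystal / single edge in exactly one crystal / double edge in $\gamma'\le\gamma$ crystals), the same edge-pairing bijection $\psi$ assembled from the network classification and glued via Lemmas \ref{L_NstarED} and \ref{L_crystals2}, the same ``crystal as switch'' mechanism trading the $1/\gamma$ weight surcharge on single edges for contraction, and the same conclusion via Theorem \ref{T_PC} with $\alpha=\Theta(1/(\gamma d))$ and $D_w=O(n^2)$. The only slip is your opening claim that $\psi$ always fixes the connecting edge(s) --- in the crystal case it cannot, and must instead pair $\{\cT_0',\cT_0''\}$ with an edge of the crystal's double edge --- but you correct this yourself in the detailed case analysis, exactly as the paper does.
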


In particular, this result implies that $t_{\rm mix}=O(n^4\log n)$.

\begin{proof}
As discussed, let us assume that $\gamma>0$, as otherwise
a simple adaptation of the general reasoning in types $B_n$ and $D_n$ 
(the proof of Theorem \ref{T_MainBD})
shows that $t_{\rm mix}= O(d \log n)$.

Consider two copies of 
lazy simple random walk $(\cT_n')$ and 
$(\cT_n'')$ on
$\ig(C_n,{\bf s})$, 
started from 
neighbors 
$\cT_0',\cT_0''\in \tour(C_n,{\bf s})$. Let $\cG$ be such that 
$\cT_0''=\cT_0'*\cG$. 

The first step is to obtain an edge pairing $\psi$, which will 
associate a $\cT_1''$ to each possible $\cT_1'$. 
Then we will show that this coupling is contractive, 
with respect to the re-weighting  $w=1+1/\gamma$ on single edges
and $w=1$ on each edge in double edges.
The key in this regard will be the classification of extended 
networks, established in Section \ref{S_Networks}.

By Lemma \ref{L_crystals2}, there are three cases to consider: 
\begin{enumerate}
\item$\cG=\Delta$ is a neutral triangle, 
and the edge $\{\cT_0',\cT_0''\}$ is in 
\begin{enumerate}
\item no crystal, 
\item  exactly one crystal.
\end{enumerate}
\item $\cG=\Theta$ is a neutral clover, 
and the double edge between $\cT_0',\cT_0''$ is 
in $\gamma'\le\gamma$ crystals. 
\end{enumerate}

In these cases, 
we will construct couplings 
with the following properties:   
\begin{itemize}
\item In Case 1a, either 
$\cT_1'=\cT_1''$, or else $\cT_1',\cT_1''$ are again joined by a single edge. 
\item In Case 1b, 
either $\cT_1',\cT_1''$ are joined by a double 
edge in the crystal, or else $\cT_1',\cT_1''$ are joined by a single
edge.  
\item In Case 2,  
either $\cT_1',\cT_1''$ are joined by a single 
edge in some crystal containing the double edge between $\cT_0',\cT_0''$, 
or else $\cT_1',\cT_1''$ are joined by a double
edge.  
\end{itemize}

Note that, under these couplings, crystal networks work like 
``switches,'' in that they move single edges to double edges, and 
vice versa. 
Also note that, it is Cases 1b and 2 in which the choice of $w$
is crucial. We put weight $w=1+1/\gamma$ on single edges
so that, as we will see, the couplings in these cases are contractive.

{\bf Case 1a.} Suppose that $\cG=\Delta$
is a neutral triangle, and that the single
edge $\{\cT_0',\cT_0''\}$
is not contained in a crystal. 
Then,  
by Lemmas \ref{L_NclassDis} and \ref{L_NclassAdj}, 
all extended networks $\Nex$ 
containing $\{\cT_0',\cT_0''\}$ are 
single and double diamonds. 
As such, it is only slightly more complicated to construct
a contractive coupling in this case,  
than it was in types $B_n$ and $D_n$ above. 
We proceed as depicted in Figure \ref{F_coup_C1a}
(cf.\ Figure \ref{F_coup_BD}). 

\begin{figure}[h!]
\includegraphics[scale=1.15]{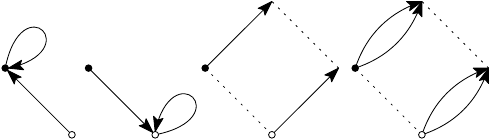}
\caption{{\it Case 1a:} A contractive coupling, 
when the starting positions (black and white vertices) 
are joined by a single edge, which is not 
in a crystal. 
}
\label{F_coup_C1a}
\end{figure}

Once again (as in the proof of Theorem \ref{T_MainBD}), 
using Theorem \ref{T_deg} and 
Lemmas \ref{L_Nstar} and \ref{L_NstarED},
we find a bijection $\psi$ from 
from $\cE(\cT_0')$
to $\cE(\cT_0'')$ that fixes the edge $\{\cT_0',\cT_0''\}$
and pairs ``opposite'' edges in single and double diamonds
containing $\{\cT_0',\cT_0''\}$. 
and so correspond to the same generator.

For each edge $\{\cT_0',\cT_0'*\cG'\}$
in a single diamond $\Nex$
containing $\{\cT_0',\cT_0''\}$, 
we let 
\begin{equation}\label{E_phiC1}
\psi(\{\cT_0',\cT_0'*\cG'\})=\{\cT_0'',\cT_0''*\cG''\}
\end{equation}
be the ``opposite'' edge in $\Nex$. 

Likewise, 
for each double edge, consisting of two copies 
$\{\cT_0',\cT_0'*\cG'\}^{(i)}$, 
with $i\in\{1,2\}$, of the same edge 
in a double diamond network $\Nex$
containing $\{\cT_0',\cT_0''\}$,  
we let 
\begin{equation}\label{E_phiC2}
\psi(\{\cT_0',\cT_0'*\cG'\}^{(i)})=\{\cT_0'',\cT_0''*\cG''\}^{(i)},
\end{equation}
with $i\in\{1,2\}$, 
be the ``opposite'' edges in $\Nex$. 

We couple $\cT_1',\cT_1''$ as follows. 
Let $\{\cT_0',\cT_0'* \cG'\}$ be a 
uniformly random edge in  $\cE(\cT_0')$ 
and $r_0$ a Bernoulli$(1/2)$.
(Note that, this is a uniformly random edge, not generator.
Indeed, neutral clovers $\cG'=\Theta$ 
corresponding to double edges are twice as likely
to be selected as neutral triangles $\cG'=\Delta$.) 
If $\cG'=\cG$, we put 
$\cT_1'=\cT_1''=\cT_0'$ if $r_0=0$
and $\cT_1'=\cT_1''=\cT_0''$ if $r_0=1$. 
On the other hand, if $\cG'\neq \cG$, 
we put $\cT_1'=\cT_0'$ 
and $\cT_1''=\cT_0''$
if $r_0=0$
and $\cT_1'=\cT_0'*\cG'$
and $\cT_1''=\cT_0''*\cG''$
if $r_0=1$, where $\cG''$ is given by the bijection 
$\psi$, defined  in \eqref{E_phiC1} or \eqref{E_phiC2} above. 

Note that $\cT_1'=\cT_1''$ with 
probability $1/d$. 
Otherwise, $\cT_1',\cT_1''$ 
are again joined by a single edge. As such
\begin{equation}\label{E_E1a}
\E[w(\cT_1',\cT_1'')]= (1-1/d)w(\cT_0',\cT_0''). 
\end{equation}

{\bf Case 1b.} Suppose that $\cG=\Delta$
is a neutral triangle, and that the single
edge $\{\cT_0',\cT_0''\}$
is contained in {\it exactly} one crystal. 
Once again, by Lemma \ref{L_NstarED}, 
all single and double diamonds and the one crystal
containing $\{\cT_0',\cT_0''\}$ are otherwise 
edge-disjoint. 
In this case, the bijection can no longer 
fix $\{\cT_0',\cT_0''\}$, as in Case 1a. 
Rather, we will need to use this edge in a non-trivial way
in order to 
define the edge pairing within the crystal.

The bijection $\psi$, in this case, is defined in the 
same way as in Case 1a for the edges in each single
and double diamond. On the other hand, for the edges in the crystal, 
we define $\psi$ as indicated in Figure \ref{F_coup_C1b}. 
That is, the two single edges in the crystal incident to 
$\cT_0'$ 
(one of which is $\{\cT_0',\cT_0''\}$) 
are paired with the double edges in the crystal 
incident to $\cT_0''$, and vice versa. 
By Theorem \ref{T_deg}, $\psi$ is a bijection 
from $\cE(\cT_0')$
to $\cE(\cT_0'')$.
 We stress here that 
$\psi$ pairs edges, not generators, 
and it is critical, in this case, that there is only
one crystal containing $\{\cT_0',\cT_0''\}$ (since it can 
only be paired once). 

\begin{figure}[h!]
\includegraphics[scale=1.15]{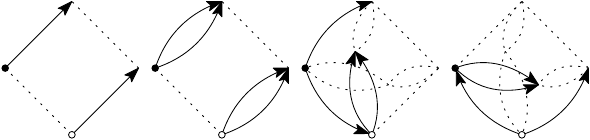}
\caption{{\it Case 1b:} A neutral 
coupling, 
when the starting positions (black and white vertices) 
are joined by a single edge, which is in exactly one crystal. 
Note that, if the walks stay in the crystal, they
move to a pair joined by a double edge. 
}
\label{F_coup_C1b}
\end{figure}

To couple $\cT_1',\cT_1''$, 
we let 
$\{\cT_0',\cT_0'* \cG'\}$ be a 
uniformly random edge in $\cE(\cT_0')$ 
and $r_0$ a Bernoulli$(1/2)$.
If $r_0=0$, we put 
$\cT_1'= \cT_0'$ and $\cT_1''=\cT_0''$.
If $r_0=1$, 
we put 
$\cT_1'=\cT_0'*\cG'$
and $\cT_1''=\cT_0''*\cG''$, 
where $\cG''$ is given by the bijection $\psi$. 

In this case, with 
probability $2/d$ 
the pair  $\cT_1',\cT_1''$ 
remains in the crystal, but
are now joined by a double edge. 
Otherwise, $\cT_1',\cT_1''$ 
are again joined by a single edge.
Therefore, 
\begin{align}
\E[w(\cT_1',\cT_1'')]
&= (1-2/d)(1+1/\gamma)+2/d\nonumber\\
&=\left[1-\frac{2}{d(1+\gamma)}\right]w(\cT_0',\cT_0''),\label{E_E1b}
\end{align}
since $w(\cT_0',\cT_0'')=1+1/\gamma$.

{\bf Case 2.} Finally, suppose that $\cG=\Theta$
is a neutral clover. Suppose that $\{\cT_0',\cT_0''\}$
is contained in $\gamma'\le \gamma$ crystals. 

By Lemmas \ref{L_NclassDis} and \ref{L_NclassAdj}, 
all extended networks $\Nex$ 
containing $\{\cT_0',\cT_0''\}$ are 
double and quadruple diamonds
and crystals. As in the previous cases, 
we define $\psi$ in this case by 
pairing ``opposite'' edges in the 
double and quadruple diamonds. 
In this case, $\psi$ fixes the two edges in the double edge
between $\cT_0',\cT_0''$. 
Note that if $\{\cT_0',\cT_0''\}$ is in a crystal, 
then one of $\cT_0',\cT_0''$ is incident 
to two single edges
in the crystal and the other is incident to a double
edge $\neq \{\cT_0',\cT_0''\}$ in the crystal. 
We define $\psi$ on each such crystal by
pairing these edges, as indicated in 
Figure \ref{F_coup_C2}. 
Once again, applying Theorem \ref{T_deg}, 
we see that $\psi$ is a bijection 
from $\cE(\cT_0')$
to $\cE(\cT_0'')$.

\begin{figure}[h!]
\includegraphics[scale=1.15]{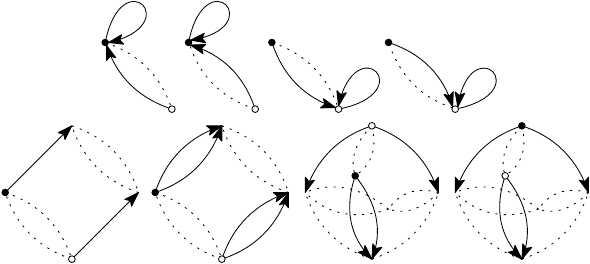}
\caption{{\it Case 2:} A contractive coupling, 
when the starting positions (black and white vertices) 
are joined by a double edge. 
}
\label{F_coup_C2}
\end{figure}

To couple $\cT_1',\cT_1''$, 
we let $\{\cT_0',\cT_0'*\cG'\}$
be a uniformly 
random edge in $\cE(\cT_0')$
and $r_0$ a Bernoulli$(1/2)$.
If $\cG'=\cG$, we put 
$\cT_1'=\cT_1''=\cT_0'$ if $r_0=0$
and $\cT_1'=\cT_1''=\cT_0''$ if $r_0=1$. 
Otherwise, if $\cG'\neq \cG$, 
we put $\cT_1'=\cT_0'$ 
and $\cT_1''=\cT_0''$
if $r_0=0$
and $\cT_1'=\cT_0'*\cG'$
and $\cT_1''=\cT_0''*\cG''$
if $r_0=1$, where $\cG''$ is given by the bijection $\psi$. 

In this case, $\cT_1'=\cT_1''$  with 
probability $2/d$. With probability $\gamma'/d$, the pair 
$\cT_1',\cT_1''$ move within one of the 
$\gamma'$ crystals containing $\{\cT_0',\cT_0''\}$, 
and are then  joined by a single edge. 
Otherwise, with probability $1-(2+\gamma')/d$, 
$\cT_1',\cT_1''$ remain joined by
a double edge. Therefore, 
\begin{align}
\E[w(\cT_1',\cT_1'')]
&=  
\frac{\gamma'}{d}(1+1/\gamma)
+1-\frac{2+\gamma'}{d}
\nonumber\\
&=1-
\frac{2\gamma-\gamma'}{d\gamma}
\nonumber\\
&\le (1-1/d)w(\cT_0',\cT_0''),\label{E_E2}
\end{align}
since $w(\cT_0',\cT_0'')=1$. 

By \eqref{E_E1a}--\eqref{E_E2}, we may apply 
Theorem \ref{T_PC} with $\alpha=O(1/\gamma d)$. 
Note that, by Theorem \ref{T_MainInt}, it follows that 
$D_w=O((1+1/\gamma)D)=O(n^2)$. 
We conclude that 
the mixing time is bounded by $O(\gamma d\log n)$,
as claimed. 
\end{proof}

\section{Future directions}
\label{S_OPs}

We conclude with a list possibilities 
for future study. 

\begin{enumerate}
\item It remains open to find lower bounds for the mixing time, and 
to determine whether our bounds 
are sharp. In types $A_{n-1}$, $B_n$ and $D_n$, 
we might conjecture so, at least up to logarithmic factors.

Recall that, in these types, 
we have shown that $t_{\rm mix}= O(d\log n)$, for {\it any} score sequence, 
where $d$
is the degree of the interchange graph. 
In type $A_{n-1}$, Sarkar \cite{Sar20} 
has shown that $t_{\rm mix}= \Omega(n^3)$ for a special class
of score sequences with $d=\Theta(n^3)$ and a ``bottleneck'' that is simple to analyze.  
A similar argument also works in the other types $B_n$, $C_n$ and $D_n$. 
Perhaps at least $t_{\rm mix}= \Omega(d)$ can be shown to hold in general. 

As discussed, in type $A_n$, 
Chen, Chang and Wang  \cite{CCW09}
have shown that the interchange graph is the 
hypercube, for some very specific score sequences. 
This shows, at least in some cases, that the bound $t_{\rm mix}= O(d\log n)$
is sharp, {\it with} the logarithmic factor. 

In type $C_n$, on the other hand, we have used a re-weighting of the metric
to show that $t_{\rm mix}= O(\gamma d \log n)$, 
where $\gamma$ is the maximal crystal degree. 
Perhaps other techniques can lead to an improvement. 
However, we think that crystals in type $C_n$ are a 
genuine obstacle, 
so it might be surprising if, in fact, $t_{\rm mix}= O(d\log n)$ also
in this type. 

\item Recall that Theorem \ref{T_MainInt} shows that the interchange 
graphs are connected with diameter $D=O(n^2)$.
It might be of theoretical interest to find a precise
formula for $D$, or at least good bounds, as a function of ${\bf s}$. 
Note that Theorem \ref{T_deg} above (proved in \cite{KMP23})
gives such a formula for the degree $d$. We also note that 
in \cite{BL84} some results are proved about the 
diameter $D$ in type $A_{n-1}$.

\item Recall that each edge in the interchange graph corresponds
to a generator reversal. Generators are the smallest neutral structures. 
It might be interesting to consider a generalization, in which 
neutral structures up to a given size can be reversed in a single step, 
and to quantify the decrease  
in the mixing time. 

Related to this, Gioan \cite{Gio07}
has studied cycle and cocycle reversing systems, 
and these have been generalized by  
Backman \cite{Bac17,Bac18}. 
One might pursue Coxeter 
analogues of these results.

\item A {\it graphical zonotope}
is a polytope obtained as a Minkowski sum of line segments, where
the sum is indexed by the edges of the graph (see, e.g., 
Ziegler \cite{Zie95}). 
The permutahedron 
\[
\Pi_{n-1}={\bf w}_n+\sum_{1\le i<j\le n}[{\bf 0}_n,{\bf e}_j-{\bf e}_i]
\] 
is the graphical 
zonotope of the complete graph $K_n$. Likewise, 
the Coxeter versions $\Pi_\Phi$ 
are obtained as sums indexed by the edges in the complete signed
graphs $\cK_\Phi$. 
It could be interesting to study random walks on the fibers
of other graphical zonotopes. 

That being said, our current
arguments take full  advantage of the symmetry of 
$\cK_\Phi$. Once some edges become unavailable, it is more challenging
(or even impossible) 
to show connectivity (and bound the diameter) of the interchange graph, 
and to devise a path coupling (which we have
accomplished, via a non-trivial edge pairing argument).

\item Rapid mixing 
can be a starting point for approximate counting. 
It would be interesting if our result could help with 
counting the number of vertices in interchange graphs, 
for a general score sequence. As already discussed, these 
have been approximated (see \cite{Spe74,McKay90,McKW96,IIMcK20}) 
only in type $A_{n-1}$ and 
when $\bf s$ is close to the center of $\Pi_{n-1}$. 

\item In this work, we have studied random walks
on interchange graphs associated with score sequences.  
However, one could also, quite naturally, try to study random walks 
on the set of score sequences itself.
In type $A_{n-1}$, all lattice points are score sequences. 
In types $B_n$, $C_n$ and $D_n$ the score sequences
are more complicated sets of points, characterized in 
the previous work in this series \cite{KMP23}. 

\item Finally, we recall that the interchange graphs in 
Figures \ref{F_drum} and \ref{F_tamb} are Cartesian products. 
Also recall that, in type $A_n$, some interchange 
graphs are the hypercube \cite{CCW09}, which are a simple 
example of a product graph. 
It might be enlightening to investigate the 
product structure of interchange graphs
more generally. 

\end{enumerate}

\section{Acknowledgements}
We thank 
Christina Goldschmidt, 
James Martin, 
Sam Olesker-Taylor, 
Oliver Riordan and 
Matthias Winkel for helpful conversations. 
We thank the two anonymous reviewers 
for their many insightful comments
and suggestions for future work. 

This publication is based on work (MB; RM) partially supported 
by the EPSRC Centre for Doctoral Training in Mathematics of 
Random Systems: Analysis, Modelling and Simulation (EP/S023925/1). 

TP was supported by the Additional Funding Programme 
for Mathematical Sciences, delivered by EPSRC (EP/V521917/1) 
and the Heilbronn Institute for Mathematical Research.

This work was carried out while BK was at the University of Oxford.
BK gratefully acknowledges the support provided 
by a Florence Nightingale Bicentennial Fellowship
(Department of Statistics) and a Senior Demyship (Magdalen College).


\makeatletter
\renewcommand\@biblabel[1]{#1.}
\makeatother

\providecommand{\bysame}{\leavevmode\hbox to3em{\hrulefill}\thinspace}
\providecommand{\MR}{\relax\ifhmode\unskip\space\fi MR }
\providecommand{\MRhref}[2]{%
  \href{http://www.ams.org/mathscinet-getitem?mr=#1}{#2}
}
\providecommand{\href}[2]{#2}

\end{document}